\documentclass[11pt,reqno]{amsart}
\usepackage[a4paper,margin=1.15in]{geometry}
\usepackage{amsmath,amssymb,amsthm,mathtools,tikz,dsfont}
\usepackage{mathrsfs}
\usepackage{changepage}
\usepackage{caption}
\usepackage{microtype}
\usepackage{enumitem}
\usepackage[colorlinks=true,linkcolor=blue,citecolor=magenta,urlcolor=blue]{hyperref}
\usepackage[nameinlink,noabbrev]{cleveref}

\numberwithin{equation}{section}
\newtheorem{theorem}{Theorem}[section]
\newtheorem{proposition}[theorem]{Proposition}
\newtheorem{conjecture}{Conjecture}
\newtheorem{lemma}[theorem]{Lemma}

\theoremstyle{definition}

\newtheorem{remark}[theorem]{Remark}

\title{Sums of von Mangoldt convolutions via trace-sum averages over $U(N)$}

\author{Ayesha Irfan}
\address{School of Mathematics, University of Bristol, Bristol, United Kingdom}
\email{ayesha.irfan@bristol.ac.uk}
\date{\today}

\begin{document}

\subjclass[2020]{60B20, 11M06, 11N37, 11M50}
\keywords{Random Matrix Theory, Variance of almost primes, Riemann zeta function, logarithmic derivative, Sums of convolutions, von Mangoldt}

\begin{abstract}
We study piecewise polynomial functions $\delta_k(c)$ that appear in the conjectured
asymptotics for the variance of short-interval sums of von Mangoldt convolutions $\Lambda^{*k}$. By a theorem of Kuperberg and Lalín, a unitary matrix integral involving a sum of trace products governs the large-$q$ limit of the function-field analogue of this variance. We identify $\delta_k(c)$ as the leading-order coefficient of this integral when the total trace degree and matrix size grow with a fixed ratio $c$. Through an asymptotic analysis of two lattice-point representations for the matrix integral, we obtain explicit forms of $\delta_k(c)$. A finite spline expansion and a contour-integral representation expressing $\delta_k(c)$ as the inverse Laplace transform of a two-weight block Hankel determinant are also obtained.
\end{abstract}
\maketitle
\section{Introduction}\label{sec:intro}

\subsection{Motivation from divisor sums.} The study of arithmetic functions in short intervals has brought to light an intriguing bridge between classical problems in analytic number theory and the spectral behaviour of random matrix ensembles. This correspondence was developed in depth for the $k$-th divisor function $d_k(n)$, which counts the number of ways to write a positive integer $n$ as a product of $k$ positive integers: \begin{equation}
\label{eq:divisor-function}
d_k(n)
:=
\#\bigl\{
(n_1,\ldots,n_k)\in\mathbb{N}^k:
n_1\cdots n_k=n
\bigr\}.
\end{equation}
For real $x,H>0$, let
\begin{equation}
\label{eq:divisor_short_interval_error}
\Delta^{(d)}_k(x;H)
:=
\sum_{x<n\leq x+H}d_k(n)
-
\operatorname*{Res}_{s=1}
\left(
\zeta(s)^k\frac{(x+H)^s-x^s}{s}
\right)
\end{equation}
be the remainder term for sums of $d_k$ over short intervals $(x, x+H]$.

Over the polynomial ring $\mathbb{F}_q[t]$, Keating, Rodgers, Roditty-Gershon and Rudnick \cite{kr3} studied the variance of short-interval sums of the analogous divisor function on monic polynomials. They showed that the leading coefficient in its large-$q$ limit is given by a unitary matrix integral of the form:
\begin{equation}
\label{eq:Ik-secular-integral}
I^{(d)}_k(m;N)
:=
\int_{U(N)}
\Biggl|
\sum_{\substack{j_1+\cdots+j_k=m\\
                0\leq j_1,\ldots,j_k\leq N}}
\operatorname{Sc}_{j_1}(U)\cdots
\operatorname{Sc}_{j_k}(U)
\Biggr|^2
dU,
\end{equation}
defined for $m\in \mathbb{N}_0$ and $N \in \mathbb{N}$. Here $dU$ denotes the normalised Haar probability measure on the unitary group and $\operatorname{Sc}_j(U)$ is the $j$th secular coefficient of
$U$. Let $c:= m/N$; for $c \in [0,k]$, they proved that
\begin{equation}
\label{eq:Ik-gamma-asymptotic}
I^{(d)}_k(m;N)=
\gamma_k(c)N^{k^2-1}
+
O_k\bigl(N^{k^2-2}\bigr),
\qquad
N\rightarrow\infty,
\end{equation}
where $\gamma_k(c)$ is a piecewise polynomial of degree $k^2-1$. 
This led to a conjecture for the corresponding integer problem, in which the fixed ratio $c$ controls the short-interval scale. For $H=X^{1-1/c}$ with $1<c<k$, they conjectured that
\begin{equation}
\label{eq:integer-divisor-variance-conjecture}
\frac{1}{HX}
\int_X^{2X}
\big|\Delta^{(d)}_k(x;H)\bigr|^2\,dx
\sim
a_k\gamma_k(c)
\bigl(\log X^{1/c}\bigr)^{k^2-1},
\qquad
X\rightarrow\infty,
\end{equation}
where $a_k$ is an arithmetic factor.

Furthermore, $\gamma_k(c)$ was shown to exhibit an additional layer of analytic structure; Basor, Ge, and Rubinstein represented it through the inverse Fourier transform of a Hankel determinant, relating it to the Painlevé V equation \cite{gammakc}. Bettin and Conrey \cite{bettinconrey} later proved that the predicted $\gamma_k(c)$ is in fact recovered in the integer setting, i.e. \eqref{eq:integer-divisor-variance-conjecture} holds, if one assumes a suitable shifted moments conjecture for $\zeta(s)$. 

We will study the piecewise polynomial functions $\delta_k(c)$ that emerge from the short-interval variance problem for the $k$-fold von Mangoldt convolution $\Lambda^{*k}$ instead; the arithmetic motivation for this quantity is discussed shortly. The work of Kuperberg and Lal\'{i}n \cite{matilde} provides the passage from the arithmetic variance problem for $\Lambda^{*k}$ to a matrix integral over the unitary group. Through their function-field analysis, the relevant large-$q$ limits are expressed as trace-sum averages over compact matrix groups. This enables us to determine $\delta_k(c)$ through the asymptotic
analysis of two lattice representations for the trace-sum averages;
these descriptions form the basis of our conjecture for the
asymptotic variance of short-interval sums of $\Lambda^{*k}$.

\subsection{Classical theory.} 
For $\Re(s)> 1$, the logarithmic derivative of the Riemann zeta function is given by the Dirichlet series
\begin{equation}
\label{eq:log-derivative-zeta}
      \frac{\zeta'}{\zeta}(s) = -\sum_{n=1}^{\infty} \frac{\Lambda(n)}{n^{s}}.
 \end{equation}
Here, $\Lambda(n)$ is the von Mangoldt function, defined as
 \begin{align}
 \Lambda(n) & := \begin{cases} 
   \log{p}
  &  \exists \, m \in \mathbb{N }, \, p \in \mathbb{P}: n = p^{m} \\
      0
     & \text{otherwise.}
   \end{cases}
\end{align}
Thus $\Lambda$ is supported on prime powers, assigning the logarithmic weight
$\log p$ to each power $p^m$. The sum of $\Lambda(n)$ over $n\leq x$ gives Chebyshev's function
$\psi(x)$, the asymptotic behaviour for which is equivalent to the Prime Number Theorem \cite{kn:dav80}:
\begin{equation}
\psi(x) := \sum_{n \leq x} \Lambda(n) \sim x \quad \text{as} \quad x \rightarrow \infty.
\end{equation} 
The expected main term for the increment $\psi(x+H)-\psi(x)$ is $H$; the short-interval problem asks how large the fluctuations around this main term are. Let $x, H>0$ be real. If we define
\begin{equation}
\label{eq:short-interval-von-mangoldt}
   \psi(x;H):=\psi(x+H)-\psi(x)
   =\sum_{x<n\leq x+H}\Lambda(n),
\end{equation}
then, assuming the Riemann Hypothesis, Goldston--Montgomery \cite{goldstonmont87} proved that
\begin{equation}
\label{GM}
   \frac{1}{X}\int_1^X
   \left|\psi(x;H)-H\right|^2\,dx
   \sim H\bigl(\log X-\log H\bigr), \qquad \text{as } X \to \infty,
\end{equation}
holds uniformly in the range $X^{\varepsilon}\leq H\leq X^{1-\varepsilon}$ for every fixed $0<\varepsilon<\tfrac{1}{2}$  if and only if the strong pair-correlation conjecture holds \cite{kn:mont73}. Keating and Rudnick \cite{keating-rudnick} have since proved the function-field analogue of \eqref{GM}. The variance considered in this paper is a higher-order analogue of the
classical variance of primes in short intervals shown above. For the $k$-fold Dirichlet convolution of the von Mangoldt function, we write
\begin{equation}
   \Lambda^{*k}:=\underbrace{\Lambda * \cdots * \Lambda \mathstrut}_{k \text{ times}} \, ,
\end{equation}
or equivalently, 
\begin{equation}\label{eq:lambda-convolution-representation}
   \Lambda^{*k}(n)
   =
   \sum_{n_1\cdots n_k=n}
   \Lambda(n_1)\cdots\Lambda(n_k).
\end{equation}
As each factor $\Lambda(n_i)$ is non-zero only when $n_i$ is a prime power, the sum in \eqref{eq:lambda-convolution-representation} is a weighted
count of ordered factorisations
\begin{equation}
\label{almost-prime}
   n=q_1\cdots q_k,
   \qquad q_i=p_i^{m_i},\quad p_i\in\mathbb P, \quad m_i\geq 1,
\end{equation}
with weight $(\log p_1)\cdots(\log p_k)$ attached to such a factorisation. We adopt the terminology of Farmer, Gonek, Lee and Lester \cite{kn:fargon} by referring to an integer $n$ that satisfies \eqref{almost-prime} as a $k$-fold almost prime. Using this convention, an almost prime $n$ is an integer that can be represented as a product of $k$ prime powers, not necessarily a product of $k$ primes. Note that \eqref{eq:log-derivative-zeta} gives
\begin{equation}\label{eq:dirichlet-series-lambda-convolution}
   \sum_{n=1}^{\infty}  \frac{\Lambda^{*k}(n)}{n^s}
   =
   \left(-\frac{\zeta'}{\zeta}(s)\right)^k,
   \qquad \Re s>1.
\end{equation}
\subsection{Variance of almost-primes.} Let $X$ be a large real parameter, and suppose $H=H(X)$ satisfies $H=o(X)$; then for any starting point $x\in[X,2X]$, we regard the interval $(x, x+H]$ as short. For real $x,H>0$, define
\begin{equation}\label{eq:psi}
 \Psi_k(x;H):=\sum_{x<n\le x+H}\Lambda^{*k}(n).
\end{equation}
The expected main term is the contribution from the pole of $( -\zeta'/\zeta(s))^k$ at $s=1$. We therefore define the centred error term by
\begin{align}\label{eq:def-Delta}
\Delta_{k}(x;H)
:=
\Psi_k(x;H)
-
\operatorname*{Res}_{s=1}
\left[
\left(-\frac{\zeta'}{\zeta}(s)\right)^k
\frac{(x+H)^s-x^s}{s}
\right].
\end{align}
The normalised short-interval variance is
\begin{equation}
\label{def:variance}
    \mathcal{V}_{k}(X,H)
    :=
    \frac{1}{XH}
    \int_{X}^{2X}
    \left|\Delta_{k}(x;H)\right|^{2}\,dx.
\end{equation}
When $k=1$, this recovers a normalised, dyadic form of the classical variance of primes in short intervals \eqref{GM}. In what follows, we specialise to the scale $H=X^{1-1/\alpha}$ for a real parameter $\alpha > 0$; this choice gives $H=o(X)$ for every $\alpha>0$.

Other statistics involving $\Lambda^{*k}$ have been studied by
Farmer, Gonek, Lee and Lester \cite{kn:fargon}, who investigate
the relationship between correlations of the Riemann zeros, weighted almost-prime statistics, and $t$-aspect mean values of products
of shifted logarithmic derivatives $\zeta'/\zeta$ near the
critical line. Banks and Sinha \cite{kn:banks-sinha25} study twisted summatory functions involving both $\Lambda^{*k}$ and the generalised von Mangoldt function
\begin{equation}\label{eq:selberg-von-mangoldt}
   \Lambda_j(n)
   :=
   \sum_{d\mid n}\mu(d)\log^j(n/d),
\end{equation}
obtaining uniform estimates for such sums which are equivalent to
the Riemann Hypothesis. Note that $\mathcal V_k(X,H)$ is the short-interval variance from the integer setting to be modelled: our conjectural asymptotic and other results in this paper are guided
by the corresponding matrix integral in random matrix theory, rather than by a
direct evaluation of the variance over the integers. By Theorem~\ref{thm:delta-cycle-formula}, we propose the following conjecture. 

Here and throughout, $\delta(\cdot)$ denotes the Dirac delta distribution and $t_+=\max(t,0)$; the symbols $i^{+}$ and $i^{-}$ denote two labelled copies of each index $i\in\{1,\ldots,k\}$. It is worth noting that the conjecture below predicts
$\mathfrak{D}_k(\eta)=(1-\eta)^{2k-1}
\mathcal{D}_k(\frac{1}{1-\eta})$,
with $\mathcal{D}_k(\alpha)$ being the leading-order coefficient
for the variance in \eqref{def:variance}, normalised by
$(\log X^{1/\alpha})^{2k-1}$, as defined in related work
\cite{kn:bcis26}. In that work, the identity
$\mathcal D_k(\alpha)=\delta_k(\alpha)$ is established under
a conjecture for shifted moments of $\zeta'/\zeta$.

\begin{conjecture}
\label{conj-variance}
Fix an integer $k\geq1$ and $0<\eta<1$, and set $H=X^\eta$. Then, as
$X\to\infty$,
\begin{equation*}
\mathcal V_k(X,H)
\sim
\mathfrak D_k(\eta)(\log X)^{2k-1},
\qquad
\mathfrak D_k(\eta)
:=
(1-\eta)^{2k-1}
\delta_k\!\left(\frac{1}{1-\eta}\right),
\end{equation*}
where, for $c>0$,
\begin{equation*}
\delta_k(c) =\sum_{(\pi,\sigma)\in\mathfrak J_k}
\hspace{-0.1in} \operatorname{sgn}(\sigma)
\int_{\mathbb R_{>0}^{2k}}
\hspace{-0.05in} \delta \Bigl(\sum_{i=1}^k u_i-c\Bigr)
\hspace{-0.05in} \prod_{\mathcal C\in\operatorname{Cyc}(\sigma)}
\hspace{-0.1in}\delta \left(D_{\mathcal C,|\mathcal{C}|}\right)
\Bigl(1 - \max_{0\le r< |\mathcal{C}|}D_{\mathcal C,r} + \min_{0\le r< |\mathcal{C}|}D_{\mathcal C,r}\Bigr)_+
d\mathbf u\,d\mathbf v.
\end{equation*}
Here, $\mathfrak J_k$ is the set of pairs $(\pi,\sigma)$ such that
$\pi$ is a set partition of $\{1^+,\ldots,k^+\}\cup\{1^-,\ldots,k^-\}$ and $\sigma$ permutes the
blocks of $\pi$. Define
\begin{equation*}
D_{\mathcal C,0}:=0,
\qquad
D_{\mathcal C,r}
:=
\sum_{s=1}^r
\biggl( \,
\sum_{i^+\in B_s}u_i
-
\sum_{j^-\in B_s}v_j
\biggr),
\qquad 1\leq r\leq |\mathcal{C}|,
\end{equation*}
for a cycle $\mathcal C=(B_1,\ldots,B_\ell) \in \operatorname{Cyc}(\sigma)$.
\end{conjecture}

The function-field literature gives a closer short-interval analogue. As mentioned, Keating
and Rudnick \cite{keating-rudnick} obtained the leading term for the function-field version of \eqref{def:variance} for $k=1$. Rodgers \cite{rodgers-almost-primes} generalised this short-interval framework to higher-order variance and covariance problems for almost-prime counts in $\mathbb F_q[T]$. His main theorem uses
the generalised von Mangoldt function \eqref{eq:selberg-von-mangoldt}, rather than the convolved $\Lambda^{*k}$ considered here. However, in his work on arithmetic consequences of the GUE
conjecture, Rodgers observed that analogous GUE-based arguments may be applied to the
convolution weights $\Lambda^{*k}$; although, unlike the $\Lambda_k$-weighted
case, the resulting formulae do not seem to possess a comparably simple
algebraic form as the convolution order $k$ grows \cite{rodgers-gue}.  

\subsection{Trace Averages over the Unitary Group}
The matrix integral we consider arises as the function-field analogue of the short-interval variance in \eqref{def:variance}. Let $q$ be an odd prime power and let $\mathbb{F}_q[T]$ denote the polynomial ring over the finite field $\mathbb{F}_q$. In this setting, one replaces integers by monic polynomials $f \in \mathbb{F}_q[T]$, and studies sums over $f$ lying in a short interval around a fixed monic polynomial $A$. Let $\mathcal{M}_n$ denote the set of monic polynomials in
$\mathbb{F}_q[T]$ of degree $n$, and write $|f|=q^{\deg f}$. For $A\in\mathcal{M}_n$ and an integer $0 \leq h \leq n-1$, the short interval is described by
\begin{equation}
   |f-A|\leq q^h.   
\end{equation}
Thus $f$ must have the same coefficients as $A$ in degrees $h+1,h+2,\ldots,n$, while the $h+1$ lower-degree coefficients may be chosen freely, giving $q^{h+1}$ possible monic polynomials. For a monic polynomial $f \in \mathbb{F}_q[T]$, we define
\begin{equation}
    \Lambda^{*k}_q(f) := \sum_{\substack{f_1 \ldots f_k = f \\ f_i \, \text{monic}}} \Lambda_q(f_1) \cdots \Lambda_q(f_k),
\end{equation}
where $\Lambda_q(f)$ is the von Mangoldt function
 \begin{align}
 \Lambda_q(f) & = \begin{cases} 
   \deg(P)
  &  f = P^m \text{ for some monic irreducible } P \text{ and } m \geq 1, \\
      0
     & \text{otherwise.}
   \end{cases}
\end{align}
Kuperberg and Lal\'{i}n prove that, in the short-interval setting over $\mathbb{F}_q[t]$, the large-$q$ limit of the variance analogous to \eqref{def:variance} is given by a Haar integral over the unitary group involving products of traces. This result is stated below.
\begin{theorem}[Kuperberg--Lal\'{i}n \cite{matilde}, Theorem~1.3]
\label{thm:kuperberg-lalin-trace-average}
Let $A\in \mathbb{F}_q[T]$ be a monic polynomial of degree $n$, let
$0\leq h \leq n-2$, and define
\begin{equation*}
   \mathcal{N}^{U}_{0,\Lambda^{*k}_q}(A;h)
   :=
   \sum_{\substack{f\ \mathrm{monic}\\ |f-A|\leq q^h\\ f(0)\neq 0}}
  \Lambda^{*k}_q(f).
\end{equation*}
Assume $k\leq n$. Then, as $q\to\infty$,
\begin{equation*}
   \left\langle \mathcal{N}^{U}_{0,\Lambda^{*k}_q}\right\rangle
   \sim
   q^{h+1}\binom{n-1}{k-1},
\end{equation*}
and
\begin{equation*}
   \frac{1}{q^n}
   \sum_{A\in\mathcal{M}_n}
   \left|
      \mathcal{N}^{U}_{0,\Lambda^{*k}_q}(A;h)
      -\left\langle \mathcal{N}^{U}_{0,\Lambda^{*k}_q}\right\rangle
   \right|^2
   \sim
   q^{h+1}
   \int_{U(n-h-2)}
   \left|
      \sum_{\substack{j_1+\cdots+j_k=n\\ 1\leq j_1,\ldots,j_k}}
      \operatorname{Tr}(U^{j_1})\cdots \operatorname{Tr}(U^{j_k})
   \right|^2
   dU.
\end{equation*}
\end{theorem}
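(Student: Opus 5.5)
We follow the standard route for short-interval variances over $\mathbb{F}_q[T]$ (Keating--Rudnick, Keating--Rodgers--Roditty-Gershon--Rudnick). The plan is to expand the indicator of the short interval $|f-A|\le q^h$ in the characters of the group of ``even'' Dirichlet characters modulo $T^{n-h}$. Concretely, we use the involution $f\mapsto f^*(T)=T^{\deg f}f(1/T)$. This turns the condition $|f-A|\le q^h$ with $f(0)\neq0$ into the condition that $f^*$ lies in a fixed class of $(\mathbb{F}_q[T]/T^{n-h})^\times/\mathbb{F}_q^\times$. Orthogonality then writes
\begin{equation*}
\mathcal{N}^{U}_{0,\Lambda^{*k}_q}(A;h)=\frac{1}{\Phi_{\mathrm{ev}}(T^{n-h})}\sum_{\chi\ \mathrm{even}\bmod T^{n-h}}\overline{\chi}(A^*)\sum_{f\in\mathcal{M}_n,\ f(0)\ne0}\Lambda_q^{*k}(f)\chi(f^*),
\end{equation*}
with $\Phi_{\mathrm{ev}}(T^{n-h})=q^{n-h-1}$. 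The trivial character gives the mean. Since $\Lambda_q^{*k}(f)$ is $\binom{n-1}{k-1}$-weighted to leading order by the prime polynomial theorem, the trivial-character term is $q^{h+1}\binom{n-1}{k-1}(1+O(q^{-1/2}))$. This gives the first assertion.

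For the variance, we subtract the trivial character and use Parseval over $A\in\mathcal{M}_n$. This yields
\begin{equation*}
\frac{1}{q^{n-h-1}}\sum_{\chi\ne\chi_0\ \mathrm{even}}\Bigl|\sum_{f\in\mathcal{M}_n}\Lambda_q^{*k}(f)\chi(f^*)\Bigr|^2\cdot q^{-(n-h-1)}\cdot q^{\,?},
\end{equation*}
where the normalisations are tracked so that the prefactor becomes $q^{h+1}/q^{n}$ times the average over $\chi$. The inner sum is the coefficient of $u^n$ in $\bigl(u\,\tfrac{d}{du}\log L(u,\chi)\bigr)^k$, since $\sum_f\Lambda_q^{*k}(f)\chi(f)u^{\deg f}=(u L'/L)^k$. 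For an even primitive $\chi$ modulo $T^{n-h}$, the $L$-function is $(1-u)\det(I-uq^{1/2}\Theta_\chi)$ with $\Theta_\chi\in U(n-h-2)$, by Katz. Writing $u\,\tfrac{d}{du}\log L=-\sum_{j\ge1}\bigl(q^{j/2}\operatorname{Tr}\Theta_\chi^j+1\bigr)u^j$ and expanding the $k$-th power, the coefficient of $u^n$ is
\begin{equation*}
(-1)^k q^{n/2}\sum_{\substack{j_1+\cdots+j_k=n\\ j_i\ge1}}\prod_i\operatorname{Tr}\Theta_\chi^{j_i}
\end{equation*}
plus terms carrying at least one factor $q^{-j_i/2}$. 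The leftover trace products are bounded by $(n-h-2)^k$, so these terms are lower order in $q$.

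Next, the imprimitive even characters number $O(q^{n-h-2})$, a proportion $O(1/q)$ of all even characters, and their sums are bounded by $O(q^{n/2})$. Their contribution is therefore negligible. The factor $q^{n}$ from $|q^{n/2}|^2$ cancels the normalisation, leaving $q^{h+1}$ times the average over primitive even $\chi$ of $|\sum\prod\operatorname{Tr}\Theta_\chi^{j_i}|^2$. Finally, Katz's equidistribution theorem says that $\{\Theta_\chi\}$ becomes equidistributed in $PU(n-h-2)$ as $q\to\infty$ for $n-h-2\ge2$ (a separate check is needed in small cases). The integrand is a polynomial in traces of total degree $n$ in $U$ and $\overline{U}$, so it is invariant under scalars and is a continuous class function on $PU$. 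The character average therefore converges to the Haar integral over $U(n-h-2)$, which is the second assertion.

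The main obstacle is the equidistribution step. It requires Katz's theorem for even primitive characters modulo $T^{n-h}$, including the identification of the unitarised Frobenius in the full $U(n-h-2)$, with the correct degree after removing the trivial zero at $u=1$. The rest is careful bookkeeping of the $q$-powers, especially the mixed terms in $(uL'/L)^k$ that contain the ``$+1$'' contributions. We show these are $O(q^{(n-1)/2})$ uniformly, so that they do not affect the leading $q^{h+1}$ asymptotic.
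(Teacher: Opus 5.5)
The paper does not prove this statement; it quotes it from Kuperberg--Lal\'in, whose argument is the Keating--Rudnick scheme you reconstruct. Your main computations are correct: the expansion $uL'/L=-\sum_{j\ge1}(q^{j/2}\operatorname{Tr}\Theta_\chi^j+1)u^j$, the $O_{n,k}(q^{(n-1)/2})$ bound on the mixed terms, the $O(q^{h})$ contribution of non-trivial imprimitive characters, and the scalar invariance of the integrand.

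The normalisation you left as $q^{\,?}$ should read
\begin{equation*}
\frac{1}{q^n}\sum_{A\in\mathcal M_n}\Bigl|\mathcal N^{U}_{0,\Lambda^{*k}_q}(A;h)-\bigl\langle\mathcal N^{U}_{0,\Lambda^{*k}_q}\bigr\rangle\Bigr|^2
=\frac{1}{q^{2(n-h-1)}}\sum_{\substack{\chi\neq\chi_0\\ \chi\ \mathrm{even}}}\Bigl|\sum_{f\in\mathcal M_n}\Lambda_q^{*k}(f)\chi(f)\Bigr|^2 .
\end{equation*}
Two facts give this. As $A$ runs over $\mathcal M_n$, $A^*\bmod T^{n-h}$ takes each of the $q^{n-h-1}$ residues with constant term $1$ exactly $q^{h+1}$ times. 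And $\chi(f^*)$ may be replaced by $\chi(f)$ using the involution $f\mapsto f^*/f(0)$ of $\{f\in\mathcal M_n:f(0)\neq0\}$ together with evenness. There are $q^{n-h-1}(1-q^{-1})$ primitive characters, each contributing $q^n|\sum\prod\operatorname{Tr}\Theta_\chi^{j_i}|^2+O(q^{n-1/2})$, so the right side is $q^{h+1}$ times the average over $\chi$. Note also that unitarity of $\Theta_\chi$ comes from Weil's Riemann hypothesis; Katz is needed only for the equidistribution.

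The step that is not right as written is the equidistribution input, and with it the range of validity. Katz's theorem does not give equidistribution in $PU(n-h-2)$ for every $n-h-2\geq2$ and every odd $q$. It is proved only for $n-h$ large enough, with restrictions on the characteristic in the smallest cases. This is why the Keating--Rudnick and Keating--Rodgers--Roditty-Gershon--Rudnick theorems carry an upper bound on $h$ (such as $h\le n-5$). So the ``separate check'' you defer is not routine, and your argument proves the asymptotic only in Katz's range.

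Of the remaining cases, $n-h-2=1$ is trivial: $PU(1)$ is a point and $|\sum\prod\operatorname{Tr}U^{j_i}|^2=\binom{n-1}{k-1}^2$ is constant on $U(1)$. But $h=n-2$ cannot be checked at all, because the asymptotic as quoted is false there. Every non-trivial even character modulo $T^2$ has $L(u,\chi)=1-u$, so $\sum_{f\in\mathcal M_n}\Lambda_q^{*k}(f)\chi(f)=(-1)^k\binom{n-1}{k-1}$. The variance is then $\frac{q-1}{q^2}\binom{n-1}{k-1}^2\neq0$, while the right-hand side is $q^{h+1}$ times the integral of $0$ over $U(0)$.

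Finally, to pass from your $O(q^{h+1/2})$ errors to ``$\sim$'' you need the main term to be non-zero. For $N=n-h-2\geq1$ this follows from \eqref{eq:Ik-partition-sum}: the partition $\lambda=(n)$ alone gives $I_k(n;N)\geq A_k((n))^2=\binom{n-1}{k-1}^2>0$.
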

Here 
$\langle \mathcal{N}^{U}_{0,\Lambda^{*k}}\rangle$ denotes the $\frac {1}{q^n}$-normalised average of
$\mathcal{N}^{U}_{0,\Lambda^{*k}}(A;h)$ over $A\in\mathcal{M}_n$. Thus the
function-field variance is governed, in the large-$q$ limit, by the trace-sum average over $U(N)$ appearing on the right-hand side. This trace average depends on two parameters with distinct arithmetic origins:
the total trace degree $n$, coming from the degree of the polynomials being
counted, and the matrix size $N=n-h-2$, determined by the length of the
short interval. For fixed integer $k\geq 1$, we regard the integral in Kuperberg and Lal\'{i}n's
formula as a two-parameter random matrix average and define
\begin{equation}
\label{eq:variance-rmt}
    I_{k}(n;N) := \int_{U(N)} \biggl| \sum_{\substack{j_{1}, \ldots, j_{k} \geq 1 \\ j_{1} + \cdots + j_{k} = n}} \operatorname{Tr}(U^{j_{1}}) \cdots \operatorname{Tr}(U^{j_{k}})  \biggr|^{2}dU,
\end{equation}
for $n,N\in\mathbb{N}$. For $0 \leq h \leq n-3$, the specialisation $N=n-h-2$ recovers the matrix integral in Theorem~\ref{thm:kuperberg-lalin-trace-average} after normalising by the short-interval size $q^{h+1}$:
\begin{equation*}
\frac{1}{q^{n+h+1}}
\sum_{A\in\mathcal M_n}
\left|
\mathcal N^U_{0,\Lambda_q^{*k}}(A;h)
-\left\langle\mathcal N^U_{0,\Lambda_q^{*k}}\right\rangle
\right|^2
\sim I_k(n,n-h-2),
\end{equation*}
which is the exact function-field analogue of $\mathcal V_k(X,H)$ in \eqref{def:variance}.

The same scaling principle is used by Keating, Rodgers, Roditty-Gershon and Rudnick in the divisor-function setting \cite{kr3}; their function-field variance formula leads to unitary averages of secular coefficients similarly with degree $n$ and matrix size $N=n-h-2$, and the limiting piecewise polynomial function $\gamma_k(c)$ is obtained by letting $n,N\to\infty$ with $c = \tfrac{n}{N}$ fixed. Likewise, we define the leading asymptotic of $I_k(n;N)$ as the function
\begin{equation}
\label{eq:rmt-delta-final}
\delta_{k}(c) := \lim_{N \rightarrow \infty} \frac{I_{k}(\lfloor c N \rfloor;N)}{N^{2k-1}}, \qquad k \in \mathbb{N}, c > 0,
\end{equation}
whenever the limit exists. Under the correspondence $X\leftrightarrow q^n$ and $H\leftrightarrow q^{h+1}$, the logarithmic interval exponent is
\begin{equation*}
\eta=\frac{\log H}{\log X}=\frac{h+1}{n}.
\end{equation*}
As $N=n-h-2$, fixing $c= \tfrac n N$ gives $\eta\to 1- \tfrac 1 c$, while $N^{2k-1}\sim n^{2k-1}(1-\eta)^{2k-1}$. Thus the asymptotic $I_k(n;N)\sim N^{2k-1}\delta_k(c)$ naturally leads to the coefficient $\mathfrak D_k(\eta)$ appearing in Conjecture~\ref{conj-variance}. 

\section{Main results}
\label{sec:main-results}

We begin with two exact lattice representations of $I_k(n;N)$; their asymptotic analysis in Section~\ref{sec:lattice-asymptotics} then yields two explicit integral representations of $\delta_k(c)$. In Section~\ref{section:finite-spline-structure}, one of these representations is evaluated to obtain a finite-spline expansion, with coefficients given by finite sums of products of Cauchy determinants. Section~\ref{section:block-Hankel-determinant} then relates $\delta_k(c)$ to a block Hankel determinant.

\subsection{Lattice representations for \texorpdfstring{$\delta_k(c)$}{delta_k(c)}}

Following the strategy of Keating et al. \cite{kr3}, we apply a character expansion to rewrite $I_k(n;N)$ as a weighted lattice count, assigning each point the square of a character sum as its weight. Although this is not an ordinary lattice-point count, in which each admissible lattice point contributes $1$, the form lends itself well to asymptotic analysis. Passing to the corresponding limiting volumes gives our first explicit representation of $\delta_k(c)$. For an integer $m\geq1$ and $(x,y)\in\mathbb R^2$, define
\begin{equation}
\label{eq:Theta-legendre-def}
\Theta_m(x,y)
:=
\frac{(x+y)^{m-1}}{m!(m-1)!}
P_{m-1}\!\left(\frac{x-y}{x+y}\right),
\end{equation}
where $P_j$ denotes the Legendre polynomial of degree $j$. Then, for integers $k,r\geq1$ and vectors $\mathbf x,\mathbf y\in\mathbb R^r$, define
\begin{equation}
\label{eq:Phi-def}
\Phi_{k,r}(\mathbf x,\mathbf y)
:=
k!\sum_{\substack{m_1+\cdots+m_r=k\\m_i\geq1}}
\det_{1\leq i,j\leq r}
\bigl[\Theta_{m_i}(x_i,y_j)\bigr].
\end{equation}
The apparent singularity at $x+y=0$ in \eqref{eq:Theta-legendre-def} is removable: after expanding the Legendre polynomial, the factor $(x+y)^{m-1}$ cancels all denominators.

\begin{theorem}\label{thm:leading-volume}
Let $k\geq1$ be an integer, and let
$\mathcal K\subset(0,\infty)$ be compact. For $c\in\mathcal K$, put
$n=\lfloor cN\rfloor$. Then, uniformly for $c\in\mathcal K$,
\begin{equation}\label{eq:Ik-asymptotic}
I_k(n;N)
=
\delta_k(c)N^{2k-1}
+
O_{k,\mathcal K}(N^{2k-2}),
\end{equation}
where
\begin{equation}\label{eq:delta-def}
\delta_k(c)
=
\sum_{r=1}^{\lfloor\sqrt{k}\rfloor}
\frac{1}{(r!)^2}
\int_{\substack{x_i\geq0\\0\leq y_i\leq1}}
\Phi_{k,r}(\mathbf x,\mathbf y)^2
\,
\delta\left(
\sum_{i=1}^r(x_i+y_i)-c
\right)
\prod_{i=1}^r d x_i \, d y_i.
\end{equation}
Here, $\Phi_{k,r}$ is as defined in \eqref{eq:Phi-def}.
\end{theorem}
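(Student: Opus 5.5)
Our approach follows the character-expansion strategy of Keating et al. \cite{kr3}. First we expand the inner sum in \eqref{eq:variance-rmt} in Schur functions. Each product $\operatorname{Tr}(U^{j_1})\cdots\operatorname{Tr}(U^{j_k})$ is a power-sum symmetric function $p_\mu(U)$, where $\mu$ is the partition obtained by sorting $(j_1,\dots,j_k)$. By the Frobenius formula, $p_\mu=\sum_{\lambda\vdash n}\chi^\lambda(\mu)s_\lambda$. Summing over compositions of $n$ into $k$ parts gives
\begin{equation*}
\sum_{j_1+\cdots+j_k=n}\operatorname{Tr}(U^{j_1})\cdots\operatorname{Tr}(U^{j_k})=\sum_{\lambda\vdash n,\ \ell(\lambda)\le N} c_k(\lambda)\, s_\lambda(U),
\end{equation*}
where $c_k(\lambda)=\sum_{j}\chi^\lambda(j_1,\dots,j_k)$. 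Schur orthogonality on $U(N)$ then gives the exact identity $I_k(n;N)=\sum_{\lambda\vdash n,\,\ell(\lambda)\le N}c_k(\lambda)^2$. The next step is to evaluate $c_k(\lambda)$ with the Murnaghan--Nakayama rule: $\chi^\lambda$ at a cycle type with $k$ parts is a signed count of ways to strip $k$ border strips from $\lambda$. Since a border strip lies in a single hook, $c_k(\lambda)$ vanishes unless $\lambda$ can be covered by $k$ hooks. We therefore parametrise $\lambda$ by Frobenius coordinates $(a_1,\dots,a_r\mid b_1,\dots,b_r)$ with Durfee square of size $r$. The condition $a_i,b_i\ge0$, $\sum(a_i+b_i+1)=n$ and $\ell(\lambda)=b_1+1\le N$ defines the lattice. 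Moreover $c_k(\lambda)$ should be a polynomial in $(a,b)$ of total degree $k-r^2$ (plus lower order), and it vanishes unless $r^2\le k$, which is why $r\le\lfloor\sqrt k\rfloor$.

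The key algebraic step is to identify the top-degree part of $c_k(\lambda)$ as $N^{k-r^2}\Phi_{k,r}(\mathbf x,\mathbf y)/(\text{const})$, where $x_i=a_i/N$ and $y_i=b_i/N$. We plan to do this with the Frobenius determinant formula for characters on hooks. Generating over cycle lengths, the sum over compositions $j_1+\cdots+j_k=n$ is the coefficient extraction $[t^n]\bigl(\sum_{j\ge1}p_jt^j\bigr)^k$. The character of a hook-type $\lambda$ against this generating function is a determinant $\det[h(a_i,b_j)]$, with entries given by one-hook-pair contributions. We then distribute the $k$ factors among the $r$ rows, with $m_i$ factors in row $i$ and $\sum m_i=k$. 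For a single pair $(a,b)$ with $m$ strips, the count of ways to place $m$ border strips joining arm $a$ and leg $b$ gives an alternating sum $\sum_\ell(-1)^\ell\binom{a}{\cdot}\binom{b}{\cdot}$-type expression. Its leading homogeneous part is $\frac{(x+y)^{m-1}}{m!(m-1)!}P_{m-1}\bigl(\frac{x-y}{x+y}\bigr)$; this follows from the explicit formula $P_{m-1}(z)=2^{1-m}\sum_l\binom{m-1}{l}^2(z-1)^{m-1-l}(z+1)^l$. This matches $\Theta_m$ and produces the factor $k!$ from ordering the cycles.

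Given this, the remainder is a Riemann-sum argument. Set $n=\lfloor cN\rfloor$ and rescale by $N$. The constraint $b_i\le N-1$ becomes $0\le y_i\le1$ (from $b_i<\ell(\lambda)\le N$), and the constraint $\sum(a_i+b_i+1)=n$ becomes the hyperplane $\sum(x_i+y_i)=c$. For each $r$, the sum of $c_k(\lambda)^2$ has size $N^{2(k-r^2)}\times N^{2r-1}$, since there are $2r$ lattice variables and one linear constraint. Dividing by the symmetry factor $(r!)^2$ accounts for the strict orderings $a_1>\dots>a_r$, $b_1>\dots>b_r$, because $\Phi^2$ is symmetric under separate permutations of $\mathbf x$ and $\mathbf y$. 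The exponent $2k-2r^2+2r-1$ equals $2k-1$ only for $r=1$, which contradicts the claimed sum over all $r$. So the homogeneity must be rechecked: $\det[\Theta_{m_i}]$ has degree $\sum(m_i-1)=k-r$, giving a total exponent of $2(k-r)+2r-1=2k-1$ for every $r$, as required. The polynomial approximation error is $O(N^{k-r-1})$ per entry, and the boundary and discreteness errors of the Riemann sum over a compact polytope slice are $O(N^{2k-2})$. Both bounds are uniform for $c\in\mathcal K$ because the integrand is a polynomial, and hence Lipschitz, on a bounded region.

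The main obstacle is the exact combinatorial evaluation of $c_k(\lambda)$ as a determinant whose leading part is $\Phi_{k,r}$. This requires handling signs and heights of border strips across different hooks and checking that cross-hook strips give no leading contribution. We plan to use the determinantal (Giambelli/Jacobi--Trudi-in-Frobenius-coordinates) form $s_\lambda=\det[s_{(a_i\mid b_j)}]$ together with the hook expansion $\langle p_\mu,s_{(a\mid b)}\rangle$. We then verify the one-hook leading term $\Theta_m$ directly via the Legendre identity above.
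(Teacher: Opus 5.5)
Your overall route is the paper's: the Schur expansion $I_k(n;N)=\sum_{\ell(\lambda)\le N}A_k(\lambda)^2$, Frobenius coordinates, Giambelli's $s_\lambda=\det[s_{(a_i\mid b_j)}]$, a per-hook leading term $\Theta_m$, a lattice-sum-to-volume step, and the $(r!)^2$ symmetrisation. However, the rank cutoff $r\le\lfloor\sqrt k\rfloor$ that the statement requires is not established.

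The claim that a border strip lies in a single hook is false: in $\lambda=(2,2)$ the rim strip of size $3$ meets both diagonal hooks. Its correct replacement, that each border strip meets the main diagonal at most once, only gives $r\le k$. Individual characters do not vanish for $\sqrt k<r\le k$. For example, with $k=2$ one has $\chi^{(2,2)}_{(3,1)}=-1$; only the sum over orderings cancels, $A_2((2,2))=\chi^{(2,2)}_{(1,3)}+\chi^{(2,2)}_{(2,2)}+\chi^{(2,2)}_{(3,1)}=-1+2-1=0$. Your other justification, degree $k-r^2$, you retract yourself, and after that nothing supports the cutoff.

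The missing idea is the paper's. The sign-stripped weight $\mathcal Q_{k,r}(\mathbf a,\mathbf b)=k!\sum_{m}\det[Q_{m_i}(a_i,b_j)]$ is alternating separately in $\mathbf a$ (swap two rows and relabel $m_i,m_j$) and in $\mathbf b$ (swap two columns). It is therefore divisible by $\Delta(\mathbf a)\Delta(\mathbf b)$, which has degree $r(r-1)$, while its own degree is at most $k-r$. Hence it vanishes identically when $r^2>k$; your $k-r^2$ is the degree of the quotient. Alternatively, keep all ranks $r\le k$ in the asymptotics, since each still has error $O(N^{2k-2})$, and apply the same argument to $\Phi_{k,r}$ to remove the extra terms.

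There are two smaller points.

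First, $A_k(\lambda)$ is not a polynomial in $(\mathbf a,\mathbf b)$. Each hook value is $\theta_m(a,b)=(-1)^bQ_m(a,b)$, and these signs leave the determinant column by column as $(-1)^{b_1+\cdots+b_r}$. So your leading-term statement holds for $(-1)^{\sum_j b_j}A_k$, and the sign disappears only after squaring.

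Second, the step you flag as the main obstacle, passing the pairing with $(\sum_jp_j)^k$ through the Giambelli determinant, needs a reason, because the Hall pairing is not multiplicative in general. The paper uses the specialisation $\mathcal A_u:p_j\mapsto ju$. This is a ring homomorphism with $s_\lambda(\mathcal A_u)=\sum_k A_k(\lambda)u^k/k!$ by the Cauchy identity, so it can be applied entrywise to Giambelli and the $u^k$ coefficient taken by multilinearity. Your ``distribute the $k$ factors among the rows'' is the same statement, and it holds because each $p_j$ is primitive.

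With those fixes, your Murnaghan--Nakayama evaluation of a single hook is a legitimate alternative to the paper's generating function. The first strip is a sub-hook and every later strip is purely horizontal or purely vertical, which gives
\[
\theta_m(a,b)=\frac{(-1)^b}{m!}\sum_{j=0}^{m-1}(-1)^j\binom{m-1}{j}\binom{a}{m-1-j}\binom{b}{j}.
\]
Its top-degree part is visibly $\Theta_m$, so the paper's finite-difference degree argument is not needed.
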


In particular, Theorem~\ref{thm:leading-volume} proves the existence of the scaling limit defining $\delta_k(c)$ and shows that only the ranks $1\leq r\leq\lfloor\sqrt{k}\rfloor$ contribute to its leading order. To obtain a more elementary description of $I_k(n;N)$, i.e. one closer to a classical lattice count, we will use a determinantal formula for unitary trace averages stated in Theorem~\ref{thm:AoT}. The advantage of this form is that the only weight is the permutation sign from a determinant, which yields a signed lattice count. Proposition~\ref{prop:signed-lattice-count} expresses the trace average as
\begin{equation*}
I_k(n;N)
=
\#A_{k,n,N}^{+}
-
\#A_{k,n,N}^{-},
\end{equation*}
where $A_{k,n,N}^{+}$ and $A_{k,n,N}^{-}$ are explicitly defined
collections of admissible lattice points associated with even and odd
permutations, respectively. 

To describe the corresponding scaling limit, we introduce
\begin{equation*}
\mathbf u=(u_1,\ldots,u_k),
\qquad
\mathbf v=(v_1,\ldots,v_k),
\end{equation*}
with $u_i,v_j>0$. We distinguish the two families using the signed index set
\begin{equation*}
\Omega_k
:=
\{1^+,\ldots,k^+\}
\cup
\{1^-,\ldots,k^-\},
\end{equation*}
where $i^+$ indexes $u_i$ and $j^-$ indexes $v_j$.

In the finite signed lattice representation, each signed index is assigned
to a lattice position. Indices assigned to the same position form a block,
and the collection of these blocks gives a set partition $\pi$ of
$\Omega_k$. For a block $B\in\pi$, write
\begin{equation*}
B^+
:=
\{i:i^+\in B\},
\qquad
B^-
:=
\{j:j^-\in B\},
\end{equation*}
and define its net displacement by
\begin{equation}
\label{eq:block-displacement}
d_B(\mathbf u,\mathbf v)
:=
\sum_{i\in B^+}u_i
-
\sum_{j\in B^-}v_j.
\end{equation}

The permutation associated with the signed lattice configuration permutes
these positions and hence induces a permutation $\sigma$ of the blocks of
$\pi$. Let $\Pi(\Omega_k)$ denote the set of all set partitions of
$\Omega_k$, and let $\mathfrak S(\pi)$ denote the permutation group of
the blocks of $\pi$. We define
\begin{equation}
\label{eq:block-permutation-set}
\mathfrak J_k
:=
\left\{
(\pi,\sigma):
\, \pi\in\Pi(\Omega_k),\quad \sigma\in\mathfrak S(\pi)
\right\}.
\end{equation}
For $(\pi,\sigma)\in\mathfrak J_k$, let
$\operatorname{Cyc}(\sigma)$ denote the set of cycles in the disjoint-cycle
decomposition of $\sigma$. For
$\mathcal C\in\operatorname{Cyc}(\sigma)$ and $|\mathcal{C}| = \ell$, write
\begin{equation*}
\mathcal C=(B_1,\ldots,B_\ell),
\qquad
\sigma(B_j)=B_{j+1}\quad(1\leq j<\ell),
\qquad
\sigma(B_\ell)=B_1,
\end{equation*}
and define the partial displacements
\begin{equation}
\label{eq:D_CJ_def}
D_{\mathcal C,0}:=0,
\qquad
D_{\mathcal C,j}
:=
\sum_{a=1}^{j}d_{B_a}(\mathbf u,\mathbf v),
\qquad
1\leq j\leq\ell.
\end{equation}
The cycle width is defined by
\begin{equation}
\label{eq:cycle-width}
\omega_{\mathcal C}(\mathbf u,\mathbf v)
:=
\max_{0\leq j<\ell}D_{\mathcal C,j}
-
\min_{0\leq j<\ell}D_{\mathcal C,j},
\end{equation}
and the associated kernel is
\begin{equation}
\label{eq:cycle-kernel}
\mathcal W_{\mathcal C}(\mathbf u,\mathbf v)
:=
\delta\!\left(D_{\mathcal C,\ell}\right)
\bigl(1-\omega_{\mathcal C}(\mathbf u,\mathbf v)\bigr)_+.
\end{equation}
Each cycle $\mathcal C$ imposes one balance condition,
$D_{\mathcal C,|\mathcal C|}=0$.  On its support, changing the initial block cyclically reorders the partial displacements $D_{\mathcal{C},0}, \ldots, D_{\mathcal{C}, \ell-1}$ and shifts them all by the same constant, so $\omega_{\mathcal C}$, and hence
$\mathcal W_{\mathcal C}$, depends only on the directed cycle
$\mathcal C$. The cycle-balance condition also ensures that \eqref{eq:cycle-kernel} eliminates cycles containing only $+$ or only $-$ indices. 

\begin{theorem}
\label{thm:delta-cycle-formula}
Let $k\geq1$ be an integer. For every $c>0$,
\begin{equation}
\label{eq:delta-cycle-final}
\delta_k(c)
=
\sum_{(\pi,\sigma)\in\mathfrak J_k}
\operatorname{sgn}(\sigma)
\int_{\mathbb R_{>0}^{2k}}
\delta\!\left(\sum_{i=1}^{k}u_i-c\right)
\prod_{\mathcal C\in\operatorname{Cyc}(\sigma)}
\mathcal W_{\mathcal C}(\mathbf u,\mathbf v)
\,d\mathbf u\,d\mathbf v,
\end{equation}
with the notation as in
\eqref{eq:block-permutation-set}--\eqref{eq:cycle-kernel}. For every compact $\mathcal K\subset(0,\infty)$, \eqref{eq:Ik-asymptotic} holds uniformly for
$c\in\mathcal K$ when
$n=\lfloor cN\rfloor$, with $\delta_k(c)$ given by
\eqref{eq:delta-cycle-final} and an error
$O_{k,\mathcal K}(N^{2k-2})$.
\end{theorem}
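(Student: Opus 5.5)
The plan is to start from the signed lattice count of Proposition~\ref{prop:signed-lattice-count}, $I_k(n;N)=\#A^+_{k,n,N}-\#A^-_{k,n,N}$. This count comes from the determinantal formula of Theorem~\ref{thm:AoT}, which expresses $\int_{U(N)}\prod_i\operatorname{Tr}(U^{j_i})\prod_j\overline{\operatorname{Tr}(U^{l_j})}\,dU$ as a sum over permutations of indicator-type terms. After expanding the square in \eqref{eq:variance-rmt}, a configuration consists of positive integers $j_1,\dots,j_k$ and $l_1,\dots,l_k$ with $\sum j_i=\sum l_j=n$. Each index $i^+$ or $j^-$ is attached to a position, which gives the set partition $\pi$, and the determinant supplies a permutation $\sigma$ of the occupied positions, hence of the blocks.

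For each cycle $(B_1,\dots,B_\ell)$, the admissibility conditions say that the net displacements $d_{B_a}(\mathbf j,\mathbf l)$ sum to zero along the cycle. They also require the partial sums $D_{\mathcal C,r}$, shifted by the starting position, to stay inside a window of length $N$. Counting the admissible starting positions for one cycle then gives $(N-\omega_{\mathcal C}(\mathbf j,\mathbf l))_+$ up to $O(1)$. I would first check this identification exactly at the discrete level, in the form
\[
I_k(n;N)=\sum_{(\pi,\sigma)\in\mathfrak J_k}\operatorname{sgn}(\sigma)\sum_{\mathbf j,\mathbf l}\mathbf 1\Bigl[\textstyle\sum j_i=n\Bigr]\prod_{\mathcal C}\mathbf 1[D_{\mathcal C,\ell}=0]\,\bigl(N-\omega_{\mathcal C}\bigr)_+ ,
\]
possibly with boundary corrections of relative size $O(1/N)$. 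The orientation conventions should be fixed so that $\operatorname{sgn}(\sigma)$ is exactly the determinant sign.

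Next, rescale by setting $\mathbf u=\mathbf j/N$ and $\mathbf v=\mathbf l/N$. The lattice has $2k$ free coordinates. The constraint $\sum u_i=c$ removes one of them, and each cycle's balance condition removes one more; the balance conditions together already force $\sum v_j=\sum u_i$. Each cycle contributes a factor $N(1-\omega_{\mathcal C})_+$. Counting powers: the free lattice points number about $N^{2k-1-|\operatorname{Cyc}(\sigma)|}$, which multiplied by $N^{|\operatorname{Cyc}(\sigma)|}$ gives total order $N^{2k-1}$. The normalised count is therefore a Riemann sum for the integral in \eqref{eq:delta-cycle-final}, with the delta functions interpreted as Lebesgue measure on the affine lattice slices. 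One must check that the lattice induced on each slice has covolume $1$ in the coordinates in which the delta distributions are normalised. This holds because the constraints have integer coefficients $0,\pm1$ and form a unimodular system: each cycle involves a disjoint set of variables, and every cycle containing both signs has a coefficient $\pm1$ on some variable.

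The error term is the main obstacle. The summand is piecewise linear and Lipschitz on the slices, and its support is a bounded polytope uniformly for $c\in\mathcal K$, since $\omega_{\mathcal C}\le 1$ bounds the $v$'s in terms of the $u$'s and $\sum u_i=c$. The standard estimate comparing a lattice sum with the integral of a Lipschitz function on a polytope of dimension $d$ scaled by $N$ then gives an error of $O(N^{d-1})$ from the boundary. Together with the $O(1)$ errors in the per-cycle counts and the replacement of $n/N$ by $c$ (an $O(1/N)$ shift in the constraint), this yields $O_{k,\mathcal K}(N^{2k-2})$. Cycles consisting only of $+$ or only of $-$ indices vanish because positivity of the entries makes the balance condition impossible. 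Finally, because Theorem~\ref{thm:leading-volume} establishes the same limit, the two expressions for $\delta_k(c)$ must agree; the limit \eqref{eq:rmt-delta-final} exists and equals \eqref{eq:delta-cycle-final} for every $c>0$.
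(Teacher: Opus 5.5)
Your overall route (signed count, blocks and cycles, the per-cycle factor $(N-\omega_{\mathcal C})_+$, unimodular elimination of one variable per cycle plus the total, then a lattice-sum estimate) matches the paper's. There is, however, a genuine gap where you pass from the signed count to your displayed discrete identity.

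In the signed count, $\pi$ is \emph{defined} by which labels share a position, so distinct blocks must occupy distinct positions. This is also what makes $\sigma$ well defined on blocks and gives $\operatorname{sgn}(\phi_X)=\operatorname{sgn}(\sigma)$. Your factor $\prod_{\mathcal C}(N-\omega_{\mathcal C})_+$ chooses the reference position of each cycle independently; the factor is in fact exact, not just correct up to $O(1)$. It therefore also counts relaxed configurations in which two distinct blocks land on the same site. This happens for blocks in different cycles, or within one cycle when two partial displacements $D_{\mathcal C,i}$ and $D_{\mathcal C,j}$ ($0\leq i<j<\ell$) coincide. Such a configuration is not a point of $A^{\pm}_{k,n,N}$ with data $(\pi,\sigma)$. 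The true displacement at the shared site is $d_B+d_C$, so $\phi_X(x_B)=x_B-d_B-d_C$, and the lattice point either fails to give a permutation or gives one with a different block structure and possibly a different sign. These are not boundary effects: $x_B=x_C$ is a hyperplane that can cut through the interior of the polytope. So your hedge ``boundary corrections of relative size $O(1/N)$'' does not cover them, and the displayed identity fails as an equality.

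The missing step is a proof that these coincident configurations number $O_{k,\mathcal K}(N^{2k-2})$. Equivalently, you must show that no coincidence can hold identically on the slice; if one did, the relaxed count would differ from the true count at leading order. The paper does this in Lemma~\ref{lem:coincident-blocks-lower-order} by showing that each condition $x_B=x_C$ is either impossible or a non-trivial affine equation in the $2k-1$ free integer variables.
\begin{itemize}
\item For blocks in different cycles, the equation has coefficients $\pm1$ in the two reference positions.
\item For blocks in the same cycle, $x_B-x_C=H_{\mathcal R}(\mathbf p,\mathbf q)$, the sum of $d_D$ over the directed segment $\mathcal R$ from $B$ up to, but not including, $C$.
\item If some sign occurs both inside and outside $\mathcal R$, transferring mass between two entries of that sign preserves the cycle total $t_a$ but changes $H_{\mathcal R}$. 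The equation is then non-degenerate.
\item Otherwise $\mathcal R$ carries exactly the positive (or exactly the negative) labels of the cycle. Then $H_{\mathcal R}=\pm t_a\neq0$, and there are no solutions.
\end{itemize}

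With this lemma added, the rest of your plan works. Your Lipschitz Riemann sum over the $(2k-1-s)$-dimensional slice is equivalent to the paper's device: keep the reference positions as lattice variables in a $(2k-1)$-dimensional polytope and apply Davenport's theorem. That is followed by a stability estimate replacing $1/N$, $n/N$, $(N-1)/N$ by $0$, $c$, $1$.
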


This is the cycle-reduced form of the signed lattice asymptotic. It retains the permutation sign responsible for the cancellation in the exact count, while organising the remaining contribution cycle by cycle through the balance and displacement conditions.

\subsection{Finite-spline and Lauricella structure}

The preceding formulas give geometric descriptions of $\delta_k(c)$. For a closed expression, it is more convenient to return to Theorem~\ref{thm:leading-volume} and integrate the polynomial $\Phi_{k,r}(\mathbf x,\mathbf y)^2$ term by term. For $\alpha,\beta\in\mathbb Z_{\geq0}^r$, write
$|\alpha|=\sum_{i=1}^r \alpha_i$ and
$\mathbf x^\alpha= \prod_{i=1}^{r}x_i^{\alpha_i}$. Define $p_{\alpha,\beta}^{(k,r)}$ by
\begin{equation}
\label{eq:p-definition}
\Phi_{k,r}(\mathbf x,\mathbf y)^2
=
\sum_{\alpha,\beta\in\mathbb Z_{\geq0}^r}
p_{\alpha,\beta}^{(k,r)}
\mathbf x^\alpha\mathbf y^\beta.
\end{equation}
An explicit form for the coefficients $p_{\alpha,\beta}^{(k,r)}$, as a finite sum involving products of Cauchy determinants, is stated in Proposition~\ref{prop:p-explicit}.

The contribution indexed by $r$ in \eqref{eq:delta-def} involves the bounded variables $\mathbf y\in[0,1]^r$, and its form can change only at the integer levels $1,\ldots,r$ which we refer to as candidate breakpoints. Thus every positive breakpoint of $\delta_k$ belongs to
\begin{equation*}
\{1,\ldots,\lfloor\sqrt{k}\rfloor\}.
\end{equation*}
We call the open intervals determined by the actual breakpoints the chambers of $\delta_k$, and refer to $c\geq\lfloor\sqrt{k}\rfloor$ as its final polynomial range. The piecewise-polynomial structure of $\delta_k(c)$ follows from a finite-spline expansion valid for every $c>0$ and, on the final polynomial range, a formula in terms of a classical multivariable hypergeometric function.

That function is one of Lauricella's four generalisations of the Gauss hypergeometric series ${}_2F_1$ to several variables, introduced by Lauricella in 1893~\cite{kn:lau93} and put on a systematic footing by Appell and Kamp\'e de F\'eriet~\cite{kn:ak26}. We will only need the first of the four Lauricella functions, $F_A^{(r)}$. For $a\in\mathbb C$, vectors $\mathbf b,\mathbf c,\mathbf z\in\mathbb C^r$ with $c_i\notin\{0,-1,-2,\ldots\}$ and $|z_1|+\cdots+|z_r|<1$, define
\begin{equation}\label{eq:FA-definition}
 F_A^{(r)}
 \left(a;\mathbf b;\mathbf c;\mathbf z\right)
 =
 \sum_{m_1,\ldots,m_r\geq0}
 \frac{(a)_{|m|}}{m_1!\cdots m_r!}
 \prod_{i=1}^r
 \frac{(b_i)_{m_i}}{(c_i)_{m_i}}z_i^{m_i},
 \qquad
 |m|=m_1+\cdots+m_r.
\end{equation}
Here $(a)_j$ denotes the rising factorial; if $a$ is a non-positive integer, the series terminates and defines a polynomial in $\mathbf z$.

\begin{theorem}
\label{thm:lauricella}
Let $k\geq1$ be an integer. For $1\leq r\leq\lfloor\sqrt{k}\rfloor$ and $\alpha,\beta\in\mathbb Z_{\geq0}^r$, set $d_{\alpha,r}
 =|\alpha|+r-1$ and let $\mathbf1_r=(1,\ldots,1)$ with length $r$. For $c>0$,
\begin{align*}
 \delta_k(c)
 &=
 \sum_{r=1}^{\lfloor\sqrt{k}\rfloor}\frac{1}{(r!)^2}
 \sum_{\substack{\alpha,\beta\in\mathbb Z_{\geq0}^r\\
                  |\alpha|+|\beta|=2(k-r)}}
 \alpha!\beta!\,p_{\alpha,\beta}^{(k,r)} \\& \hspace{1.8in} \times
 \sum_{S\subseteq[r]}(-1)^{|S|}
 \sum_{\substack{0\leq q_i\leq\beta_i \\ i\in S}}
 \frac{(c-|S|)_+^{\,2k-1-\sum_{i\in S}q_i}}
 {\left(2k-1-\sum_{i\in S}q_i\right)!
  \displaystyle\prod_{i\in S}q_i!}.
\end{align*}
On the range
$c\geq\lfloor\sqrt{k}\rfloor$, this becomes
\begin{align}
 \label{eq:delta-Lauricella}
 \delta_k(c)
 &=
 \sum_{r=1}^{\lfloor\sqrt{k}\rfloor}\frac{1}{(r!)^2}
 \sum_{\substack{\alpha,\beta\in\mathbb Z_{\geq0}^r\\
                  |\alpha|+|\beta|=2(k-r)}}
 \frac{\alpha!\,p_{\alpha,\beta}^{(k,r)}
       c^{d_{\alpha,r}}}
 {d_{\alpha,r}!\displaystyle\prod_{i=1}^r(\beta_i+1)}
 F_A^{(r)}
 \left(
  -d_{\alpha,r};
  \beta+\mathbf1_r;
  \beta+2\mathbf1_r;
  c^{-1}\mathbf1_r
 \right).
\end{align}
Here, $F_A^{(r)}$ is the Lauricella function defined in \eqref{eq:FA-definition} and $p_{\alpha,\beta}^{(k,r)}$ in \eqref{eq:p-definition}. In the above, $\alpha!=\prod_i\alpha_i!$, $\beta!=\prod_i\beta_i!$ and empty products are interpreted as $1$.
\end{theorem}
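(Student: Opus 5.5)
Starting from Theorem~\ref{thm:leading-volume}, we substitute the monomial expansion \eqref{eq:p-definition} of $\Phi_{k,r}^2$ and integrate term by term. First a degree check: each $\Theta_m$ is homogeneous of degree $m-1$. Hence $\Phi_{k,r}$ is homogeneous of degree $\sum_i(m_i-1)=k-r$, so only pairs with $|\alpha|+|\beta|=2(k-r)$ occur. It therefore suffices to evaluate, for each monomial, the constrained integral
\[
J_{\alpha,\beta}(c):=\int_{x_i\geq0,\ 0\leq y_i\leq1}\mathbf x^\alpha\mathbf y^\beta\,\delta\Bigl(\sum_i(x_i+y_i)-c\Bigr)\,d\mathbf x\,d\mathbf y .
\]
The goal is to show that $\alpha!\beta!$ times the inner sum over $S$ equals $J_{\alpha,\beta}(c)$.

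To compute $J_{\alpha,\beta}$, we take the Laplace transform in $c$. The $x_i$-integrals give $\prod_i\alpha_i!\,s^{-\alpha_i-1}$. Each $y_i$-integral is the truncated integral $\int_0^1y^{\beta_i}e^{-sy}\,dy$, which equals
\[
\beta_i!\,s^{-\beta_i-1}\Bigl(1-e^{-s}\sum_{q=0}^{\beta_i}\frac{s^q}{q!}\Bigr).
\]
Expanding the product over $i$ indexes the terms by a subset $S\subseteq[r]$, namely those $i$ for which the truncated term $-e^{-s}\sum_q s^q/q!$ is chosen. Each such term carries the factor $(-1)^{|S|}e^{-|S|s}$ times $s^{-(|\alpha|+|\beta|+2r)+\sum_{i\in S}q_i}$. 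Since $|\alpha|+|\beta|+2r=2k$, the power of $s$ is $s^{-(2k-\sum q_i)}$. Inverting with the shift theorem, using $\mathcal L^{-1}[e^{-as}s^{-n}]=(c-a)_+^{n-1}/(n-1)!$, gives exactly
\[
(c-|S|)_+^{\,2k-1-\sum q_i}\Big/\Bigl(\bigl(2k-1-\textstyle\sum q_i\bigr)!\,\textstyle\prod q_i!\Bigr),
\]
and summing over $r$ with weight $1/(r!)^2$ yields the first formula. Interchanging the finite sums with the integral is harmless, since everything is polynomial on a compact domain. Alternatively, one can avoid transforms by using the inclusion–exclusion $\mathbf 1_{[0,1]}(y)=\mathbf 1_{y\geq0}-\mathbf 1_{y\geq1}$ together with a shift $y_i\mapsto y_i+1$ and Dirichlet simplex integrals; this route reaches the same expression.

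For $c\geq\lfloor\sqrt k\rfloor\geq r$, all the $(\cdot)_+$ may be dropped, so $J_{\alpha,\beta}$ is a genuine polynomial in $c$. The step I expect to be the main bookkeeping obstacle is identifying this polynomial with the Lauricella form. Here it is easier to work directly: integrate out $\mathbf x$ first. The Dirichlet formula gives
\[
\int_{x\geq0}\mathbf x^\alpha\,\delta\Bigl(\sum x_i-t\Bigr)\,d\mathbf x=\frac{\alpha!\,t^{d_{\alpha,r}}}{d_{\alpha,r}!},\qquad d_{\alpha,r}=|\alpha|+r-1,
\]
valid for $t\geq0$. With $t=c-\sum y_i$, which is nonnegative throughout $[0,1]^r$ when $c\geq r$, we expand
\[
(c-\textstyle\sum y_i)^{d}=c^{d}\sum_{\mathbf m}\dfrac{(-d)_{|m|}}{\mathbf m!}\prod_i (y_i/c)^{m_i}
\]
via the multinomial theorem and the identity $(-1)^{|m|}d!/(d-|m|)!=(-d)_{|m|}$. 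Then $\int_0^1y^{\beta_i+m_i}\,dy=1/(\beta_i+m_i+1)$, which we write as
\[
\frac{1}{\beta_i+1}\cdot\frac{(\beta_i+1)_{m_i}}{(\beta_i+2)_{m_i}}.
\]
This reproduces exactly the terminating series $F_A^{(r)}(-d_{\alpha,r};\beta+\mathbf 1_r;\beta+2\mathbf 1_r;c^{-1}\mathbf 1_r)$ with prefactor $\alpha!\,c^{d_{\alpha,r}}/(d_{\alpha,r}!\prod(\beta_i+1))$. Because the series terminates, the convergence condition $\sum|z_i|<1$ is irrelevant. It remains only to confirm that the falling factorial/rising factorial signs match the definition \eqref{eq:FA-definition}.

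Finally, the breakpoint claim follows immediately from the first formula. The only non-smooth points are at $c=|S|\in\{1,\dots,r\}$, and these lie in $\{1,\dots,\lfloor\sqrt k\rfloor\}$.
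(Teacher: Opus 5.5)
Your proposal is correct. For the final-range formula it is the same as the paper's proof. You integrate out $\mathbf x$ with the Dirichlet simplex formula, drop the positive part because $c\geq r$, expand $(c-\sum_i y_i)^{d_{\alpha,r}}$ using $(-d)_{|m|}$, and rewrite $1/(\beta_i+m_i+1)$ as a Pochhammer ratio.

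For the finite-spline formula you take a genuinely different route. The paper stays in real variables. It first integrates out $\mathbf x$ to get $\frac{\alpha!}{d!}\int_{[0,1]^r}\mathbf y^\beta(c-\sum_i y_i)_+^{d}\,d\mathbf y$. It then imposes $y_i\leq1$ by inclusion--exclusion over $S\subseteq[r]$, shifts $y_i=1+z_i$ for $i\in S$, and expands $(1+z_i)^{\beta_i}$ binomially. Each term is evaluated with an augmented Dirichlet integral (adding a slack variable), and the factorials are simplified using $|\alpha|+|\beta|+2r=2k$.

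You instead Laplace-transform in $c$, so the constrained integral factorises into one-dimensional transforms. The truncated-gamma identity for $\int_0^1y^{\beta_i}e^{-sy}\,dy$ then produces, in one step, the inclusion--exclusion signs, the shifts $e^{-|S|s}$, the sums over $q_i$, and the exponent $2k-1-\sum_{i\in S}q_i$. This is the same shifted-inverse-Laplace mechanism the paper uses later in the proof of Theorem~\ref{thm:two-centre}, and it avoids the factorial bookkeeping.

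Your route does cost two analytic points that you leave implicit. First, the identity $\int_0^\infty e^{-sc}J_{\alpha,\beta}(c)\,dc=\int\mathbf x^\alpha\mathbf y^\beta e^{-s\sum_i(x_i+y_i)}\,d\mathbf x\,d\mathbf y$ needs Tonelli, valid because the integrand is non-negative for real $s>0$. The $\mathbf x$-range is unbounded, so ``polynomial on a compact domain'' is not the right justification here. Second, Laplace uniqueness (Lerch) only gives equality almost everywhere in $c$. To get the identity for every $c>0$, add that both sides are continuous in $c$. The right side is continuous because every exponent satisfies $2k-1-\sum_{i\in S}q_i\geq 2k-1-|\beta|\geq 2r-1\geq1$.

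The paper's route needs no inversion step. It also reuses its intermediate cube integral directly in the Lauricella computation.
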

As the Lauricella series in \eqref{eq:delta-Lauricella} terminates,
$\delta_k$ is a polynomial of degree at most $2k-2$ on the final range.
We will see that, at a candidate breakpoint $j$, Theorem~\ref{thm:lauricella} shows $\delta_k$ is at least $C^{2j-2}$; cancellations may increase the smoothness or remove
the breakpoint altogether.

\subsection{Cumulative trace averages and a block Hankel form}
\label{subsec:block-hankel}

From its definition, the function $\delta_k(c)$ describes the leading coefficient of $I_k(n;N)$ with one fixed total trace degree $n$. It is therefore useful to consider the cumulative trace average obtained by allowing the total trace degree to be at most $n$:
\begin{equation}
\label{eq:cumulative-Ik}
\tilde I_k(n;N)
:=
\int_{U(N)}
\biggl|
\sum_{\substack{j_1,\ldots,j_k\geq1\\j_1+\cdots+j_k\leq n}}
\operatorname{Tr}(U^{j_1})\cdots\operatorname{Tr}(U^{j_k})
\biggr|^2dU.
\end{equation}
By Haar invariance under $U\mapsto e^{i\varphi}U$, any integral pairing
the degree-$m$ and degree-$\ell$ pieces vanishes unless $m=\ell$. Consequently,
\begin{equation}
\label{eq:cumulative-exact-degree-decomposition}
\tilde I_k(n;N)
=
\sum_{m=k}^{n}I_k(m;N).
\end{equation}
We encode the leading large-$N$ behaviour of this cumulative average by
\begin{equation}
\label{eq:Mk-definition}
M_k(\alpha)
:=
\lim_{N\to\infty}
\frac{\tilde I_k(\lfloor \alpha N\rfloor;N)}{N^{2k}},
\qquad \alpha>0.
\end{equation}
The compact-uniform asymptotic in Theorem~\ref{thm:leading-volume} applies on intervals bounded away from $c=0$. The behaviour near $c=0$ is controlled by the stable-range evaluation \eqref{eq:Ik-stable-range}, proved in Subsection~\ref{subsec:stable-range}. Together, these results give
\begin{equation}
\label{eq:Mk-delta-relation}
M_k(\alpha)=\int_0^\alpha \delta_k(c)\,dc,
\qquad
M_k'(\alpha)=\delta_k(\alpha).
\end{equation}

Basor, Ge and Rubinstein expressed the divisor-function coefficient $\gamma_k(c)$ as the inverse Fourier transform of a Hankel determinant \cite{gammakc}. Motivated by their result, we use a cumulative contour representation of $M_k(\alpha)$, as stated in Theorem~\ref{thm:two-centre-P-input}, to express $\delta_k(c)$ as an inverse Laplace transform of a two-weight multiple Hankel determinant.

Let $C_0$ and $C_1$ be positively oriented circles of radius $\tau<\tfrac12$ centred at $0$ and $1$, respectively. For an integer $n\geq0$, set
\begin{align}
X_n(\theta;s,u,v)
&:=
\frac{1}{2\pi i}
\left(\int_{C_0}+e^{i\theta}\int_{C_1}\right)
 z^{n-k}
\exp\!\left(-sz+\frac{u}{z}+\frac{v}{z-1}\right)dz,
\label{eq:two-centre-X}
\\
W_n(\theta;u,v)
&:=
\frac{1}{2\pi i}
\left(\int_{C_1}+e^{-i\theta}\int_{C_0}\right)
 z^n(z-1)^{-k}
\exp\!\left(\frac{u}{z}+\frac{v}{z-1}\right)dz.
\label{eq:two-centre-W}
\end{align}
Form the $2k\times k$ moment matrices
\begin{align}
\mathsf X_k(\theta;s,u,v)
&:=
\bigl[X_{m+q}(\theta;s,u,v)\bigr]_{\substack{0\leq m\leq2k-1\\0\leq q\leq k-1}},
\notag\\
\mathsf W_k(\theta;u,v)
&:=
\bigl[W_{m+q}(\theta;u,v)\bigr]_{\substack{0\leq m\leq2k-1\\0\leq q\leq k-1}},
\label{eq:two-centre-moment-blocks}
\end{align}
and define the $2k\times2k$ determinant
\begin{equation}
\label{eq:two-centre-D}
D_k(\theta;s,u,v)
:=
\det\bigl[\,\mathsf X_k(\theta;s,u,v)\mid\mathsf W_k(\theta;u,v)\,\bigr].
\end{equation}
Here, $[A\mid B]$ denotes the matrix obtained by adjoining the columns of $B$ to those of $A$. Within each family of columns, the entries depend on the row and column indices only through their sum. Thus $D_k$ is a two-weight multiple Hankel determinant, equivalently a Hankel determinant with $2\times2$ matrix-valued moments.

We use the inverse Laplace notation
\begin{equation}
\label{eq:two-centre-Laplace-definition}
\mathcal L^{-1}_{s\to c}[F(s)]
:=
\frac{1}{2\pi i}
\int_{\sigma-i\infty}^{\sigma+i\infty}e^{cs}F(s)\,ds,
\qquad \sigma>0.
\end{equation}
For the expressions below, the finite local contour residues are evaluated before the integral in \eqref{eq:two-centre-Laplace-definition}.

\begin{theorem}
\label{thm:two-centre}
For an integer $k\geq1$ and $c>0$,
\begin{equation}
\label{eq:two-centre-final-formula}
 \delta_k(c)
 =
 (-1)^k\mathcal L^{-1}_{s\to c}
 \left[
 \left.
 s^{-k}\left(1+\frac{\partial_u}{s}\right)^k
 \partial_v^kD_k(0;s,u,v)
 \right|_{u=v=0}
 \right],
\end{equation}
where $D_k$ is defined using \eqref{eq:two-centre-X}-\eqref{eq:two-centre-D}, and
$\mathcal L^{-1}_{s\to c}$ as defined by
\eqref{eq:two-centre-Laplace-definition}.
\end{theorem}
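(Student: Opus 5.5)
The plan is to start from the cumulative relation \eqref{eq:Mk-delta-relation}, $\delta_k(c)=M_k'(c)$, together with the cumulative contour representation of $M_k(\alpha)$ in Theorem~\ref{thm:two-centre-P-input}. Differentiating in $\alpha$ should turn the cumulative statement into a fixed-degree one. Concretely, $M_k(\alpha)$ should be expressible as an inverse Laplace transform $\mathcal L^{-1}_{s\to\alpha}[G_k(s)]$, where $G_k$ is built from a determinant of Hankel moments. Differentiating under the Bromwich integral multiplies by $s$, so that $\delta_k(c)=\mathcal L^{-1}_{s\to c}[sG_k(s)]$; this is legitimate because $M_k(0)=0$, which follows from the stable-range evaluation \eqref{eq:Ik-stable-range}. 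The target formula then amounts to identifying $sG_k(s)$ with $(-1)^k s^{-k}(1+\partial_u/s)^k\partial_v^kD_k(0;s,u,v)|_{u=v=0}$.

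To build $G_k$, I would proceed as follows. Express $\tilde I_k(n;N)$ through Andreief/Heine: write the squared trace sum using the generating function $\sum_{j\geq1}\mathrm{Tr}(U^j)z^j=-\sum_i\log(1-ze^{i\theta_i})$. The $k$-fold product is then obtained by applying $\partial_v^k$ to $\exp(v\cdot\text{trace sum})=\det(1-zU)^{-v}$, and similarly for the conjugate factor. After the Heine identity, the average becomes a Toeplitz/Hankel determinant of one-variable contour moments. In the scaling limit $N\to\infty$ with $z=1-\text{(scaled variable)}$, the local factors $(1-z)^{-v}$ become $\exp(v/(z-1))$-type weights centred at $1$, and the constraint on total degree becomes the $z^{n-k}$ power together with the cutoff $\sum j_i\leq n$. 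The latter is encoded via $\sum_{m\leq n}$ producing the Laplace variable $s$ through $\exp(-sz)$. The $u$-variable, centred at $0$, accounts for the degree constraint $j_i\geq1$ and the size-$N$ truncation. The factor $(1+\partial_u/s)^k$ arises from converting $\sum_{j\geq1}$ into continuum integrals, where each trace contributes one power $1/s$ plus a boundary correction. The two centres $C_0,C_1$ with the phase $e^{i\theta}$ encode the two families of columns ($\mathsf X$ from the $U$-side, $\mathsf W$ from the $\bar U$-side); setting $\theta=0$ corresponds to the diagonal Fourier mode in the Haar invariance argument giving \eqref{eq:cumulative-exact-degree-decomposition}.

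So the key steps are: (i) quote Theorem~\ref{thm:two-centre-P-input} for $M_k(\alpha)$ in contour form; (ii) rewrite its integrand as $s^{-k-1}(1+\partial_u/s)^k\partial_v^kD_k(0;s,u,v)|_{u=v=0}$ up to the sign $(-1)^k$, checking that the $2k\times2k$ block structure matches by expanding both determinants; (iii) justify differentiating in $\alpha$ under the Bromwich integral; (iv) use continuity of $\delta_k$ (from Theorem~\ref{thm:lauricella}, being a finite spline with $C^0$ joins for $k\geq2$) so that $M_k'=\delta_k$ holds pointwise for every $c>0$, not just almost everywhere.

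I expect the main obstacle to be step (iii). The function $G_k(s)$ is rational in $s$ times $e^{-js}$ factors; these correspond to the breakpoints $j\in\{1,\dots,\lfloor\sqrt k\rfloor\}$, coming from the $\exp(-sz)$ evaluated near $z=1$. Bromwich convergence is therefore only conditional. I would handle this termwise: expand the residues at $0$ and $1$ into finitely many terms $s^{-m}e^{-js}$, whose inverse transforms are $(c-j)_+^{m-1}/(m-1)!$, and differentiate these explicitly. Any $m=1$ term must be checked to cancel for $c>0$, by comparison with the spline expansion of Theorem~\ref{thm:lauricella}, so that no jump discontinuities arise.
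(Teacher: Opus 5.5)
Your outer framework is the paper's. You quote Theorem~\ref{thm:two-centre-P-input}, use $M_k'=\delta_k$ (Proposition~\ref{prop:derivative-relationship}), encode each threshold $\mathds{1}_{\{\alpha>r\}}$ by a factor $e^{-rs}$ in a residue-first inverse Laplace transform, and recognise the result as derivatives of $D_k$. Whether you differentiate $P_{r,k}$ first, as the paper does, or multiply $G_k$ by $s$ afterwards makes no difference.

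However, you have put the difficulty in the wrong place. Step (iii) is routine. Theorem~\ref{thm:two-centre-P-input} already says that $P_{r,k}$ vanishes to order at least $2r^2\geq2$ at $\alpha=r$, so $\mathds{1}_{\{\alpha>r\}}P_{r,k}$ is $C^1$ and no comparison with the spline expansion is needed. A pole count, as in the paper, shows that only negative powers of $s$ survive, so no Dirac masses arise.

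Your Heine-identity paragraph is unnecessary once that theorem is quoted, and its interpretations are off. The operator $(1+\partial_u/s)^k$ simply packages $\sum_a\binom{k}{a}s^{-(k+a)}\partial_u^a$, where $\partial_u^a e^{uA}|_{u=0}=A^a$ reproduces the factor already present in $P_{r,k}$. The parameter $\theta$ plays no role beyond being set to $0$.

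The genuine gap is step (ii), which you treat as a formal check that ``the block structure matches''. Turning $D_k(0;s,u,v)$ into a $2k$-fold integral needs three ingredients:
\begin{enumerate}[label=\textup{(\alph*)}]
\item the grouped Andr\'eief identity with $f_m(z)=z^m$ and $g_q(z)=h_q(z)=z^q$, whose $1/k!^2$ normalisation absorbs the $1/k!^2$ in \eqref{eq:two-centre-M-P};
\item the splitting $\Delta(x_1,\ldots,x_k,w_1,\ldots,w_k)\Delta(\mathbf x)\Delta(\mathbf w)=(-1)^k\Delta(\mathbf x)^2\Delta(\mathbf w)^2\prod_{j,\ell}(x_j-w_\ell)$, which is the source of the sign $(-1)^k$;
\item the shift $w_\ell=y_\ell+1$.
\end{enumerate}

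The essential point is different. At $\theta=0$ every moment $X_n$ and $W_n$ is integrated over $C_0+C_1$. The Andr\'eief integral therefore contains all $2^{2k}$ contour assignments: $p$ of the $x$-variables on $C_1$ and $q$ of the $w$-variables on $C_0$, for arbitrary $0\leq p,q\leq k$. Theorem~\ref{thm:two-centre-P-input} contains only the balanced sectors $p=q=r$, each with weight $\binom{k}{r}^2$.

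So the identification holds only after you prove that every sector with $p\neq q$ vanishes once $\partial_u^a\partial_v^k$ is applied at $u=v=0$, for each $0\leq a\leq k$. Only then does symmetry in the $x$- and $w$-variables collapse the balanced assignments into $\binom{k}{r}^2$ copies of the sector with $x_1,\ldots,x_r$ on $C_1$ and $w_1,\ldots,w_r$ on $C_0$. After $w_\ell=y_\ell+1$, that is exactly the $r$-th sector of Theorem~\ref{thm:two-centre-P-input}.

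The paper imports this vanishing from the pole-count argument of \cite{kn:bcis26}, but it can be checked directly. At each centre, compare the available pole order with the vanishing order of $\Delta(\mathbf x)^2\Delta(\mathbf w)^2\prod_{j,\ell}(x_j-w_\ell)$ plus the number of variables there. Let $a_0\leq a\leq k$ and $b_1\leq k$ count the factors of $A$ and $B$ contributing poles at $0$ and at $1$ respectively. A non-zero iterated residue at $0$ then requires $(k-p)(p-q)+a_0\geq q^2$, and one at $1$ requires $(k-q)(q-p)+b_1\geq p^2$. If $q>p$ the first fails, since its left side is at most $p<q$; if $p>q$ the second fails for the same reason. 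Your plan does not address this step, and without it the two sides are not shown to agree.
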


\section{Lattice asymptotics for $I_k(n;N)$}
\label{sec:lattice-asymptotics}

This section develops the two lattice models used to analyse the matrix integral $I_k(n;N)$ when $n$ and $N$ grow at a fixed ratio $c$ with $c>0$. We begin with the partition and symmetric-function notation used
in the weighted lattice count method.

\subsection{Partitions, Young tableaux and Frobenius coordinates}
\label{subsec:partition-preliminaries}
We now introduce the partition and symmetric-function notation needed for this section. Readers familiar with partitions, Young tableaux and
Schur functions may proceed directly to Subsection~\ref{subsec:stable-range}.

A \emph{partition} is a finite weakly decreasing sequence of positive
integers
\begin{equation*}
 \lambda=(\lambda_1,\ldots,\lambda_{\ell}),
 \qquad
 \lambda_1\geq\cdots\geq\lambda_{\ell}>0.
\end{equation*}
Its \emph{size} and \emph{length} are
\begin{equation*}
 |\lambda|:=\sum_{i=1}^{\ell}\lambda_i,
 \qquad
 \ell(\lambda):=\ell,
\end{equation*}
and we write $\lambda\vdash n$ when $|\lambda|=n$. If the part $j$
occurs $m_j(\lambda)$ times, then
\begin{equation*}
 \ell(\lambda)=\sum_{j\geq1}m_j(\lambda),
 \qquad
 |\lambda|=\sum_{j\geq1}j\,m_j(\lambda).
\end{equation*}
An \emph{ordered composition} of $n$ into $k$ parts is an ordered
$k$-tuple of positive integers whose sum is $n$; sorting its entries gives
a partition of $n$ with $k$ parts.

A \emph{Young diagram} of $\lambda$ is an array of boxes
organised into rows, with $\lambda_i$ boxes in its $i$th row such that the leftmost boxes of all rows are vertically aligned. A \emph{Young tableau} is a
filling of these boxes with entries and, in particular, we use \emph{semistandard Young tableaux}: their
entries are positive integers, weakly increasing along rows and strictly
increasing down columns. For a semistandard Young tableau $T$ of shape $\lambda$ with entries in $\{1,\ldots,M\}$, let $m_j(T)$ denote the number of entries of $T$ equal to $j$. Then the Schur polynomial in $M$ variables may be defined by
\begin{equation}
\label{eq:schur-tableau-definition}
 s_\lambda(x_1,\ldots,x_M)
 :=
 \sum_T \prod_{j=1}^M x_j^{m_j(T)},
\end{equation}
where the sum is over all such tableaux $T$. Thus the partition specifies the shape, while
the tableau supplies the entries used in the monomial weight.
\begin{figure}[htbp]
\begin{adjustwidth}{1mm}{1mm}
\centering

\begin{minipage}[t]{0.44\linewidth}
\centering
\begin{tikzpicture}[
  x=0.50cm,
  y=0.50cm,
  baseline=(current bounding box.north)
]
  \foreach \row/\len in {1/6,2/4,3/3,4/1}{
    \foreach \col in {1,...,\len}{
      \draw
        ({\col-1},{1-\row}) rectangle ({\col},{-\row});
    }
  }
\end{tikzpicture}

\vspace{5mm}
{\small Young diagram of $\lambda=(6,4,3,1)$.}
\end{minipage}
\hspace{-0.02\linewidth}
\begin{minipage}[t]{0.44\linewidth}
\centering
\begin{tikzpicture}[
  x=0.50cm,
  y=0.50cm,
  baseline=(current bounding box.north)
]
  \foreach \x/\y/\entry in {
    0/0/1,1/0/1,2/0/2,3/0/3,4/0/3,5/0/7,
    0/1/2,1/1/3,2/1/3,3/1/4,
    0/2/4,1/2/4,2/2/6,
    0/3/7}{
      \draw (\x,-\y) rectangle ++(1,-1);
      \node at ({\x+0.5},{-\y-0.5}) {\entry};
  }
\end{tikzpicture}

\vspace{5mm}
{\small A semistandard tableau of the same shape.}
\end{minipage}

\captionsetup{width=0.94\linewidth,margin=0pt}
\caption{A Young diagram (left) and a semistandard Young tableau of the same shape (right). The tableau entries are weakly increasing along rows and strictly increasing down columns.}
\label{fig:young-diagram-tableau}

\end{adjustwidth}
\end{figure}

\noindent The \emph{conjugate partition} $\lambda'$ is obtained by transposing the
Young diagram; equivalently,
\begin{equation*}
 \lambda'_j=\#\{i:\lambda_i\geq j\}.
\end{equation*}
The number of boxes on the main diagonal is
\begin{equation*}
 r=r(\lambda):=\max\{i:\lambda_i\geq i\}.
\end{equation*}
It is called the \emph{Frobenius rank}.
For $1\leq i\leq r$, define
\begin{equation}
\label{eq:Frobenius}
 a_i:=\lambda_i-i,
 \qquad
 b_i:=\lambda'_i-i.
\end{equation}
Thus $a_i$ is the number of boxes to the right of the $i$th diagonal box and
$b_i$ is the number below it. In the Frobenius notation, we may express a partition $\lambda$ as 
\begin{equation*}
 \lambda=(a_1,\ldots,a_r\mid b_1,\ldots,b_r).
\end{equation*}
The coordinate sequences satisfy
\begin{equation*}
 a_1>\cdots>a_r\geq0,
 \qquad
 b_1>\cdots>b_r\geq0,
\end{equation*}
and conversely any two such sequences of the same length determine a unique
partition. The size and length become
\begin{equation}
\label{eq:frobenius-size-length}
 |\lambda|=r+\sum_{i=1}^r(a_i+b_i),
 \qquad
 \ell(\lambda)=b_1+1.
\end{equation}
A simple way to understand the first identity is through hooks. 

A \emph{hook} is a partition of the form
\begin{equation*}
(a+1,1^b),
\qquad a,b\geq0.
\end{equation*}
The Young diagram for a hook consists of a single diagonal box together with $a$ boxes to its right and $b$ boxes below it. Thus a hook has Frobenius rank one, and in Frobenius notation it is simply written
\begin{equation}
\label{eq:frob-hook}
(a\mid b).
\end{equation}
The first identity in \eqref{eq:frobenius-size-length} decomposes the diagram into its $r$ diagonal hooks; each $i$th diagonal hook contributes $1+a_i+b_i$ boxes. The
second records the number of boxes in the first column. 

\vspace{3mm}
\begin{figure}[htbp]
\begin{adjustwidth}{-13mm}{-13mm}
\centering

\begin{minipage}[c]{0.44\linewidth}
\centering
\begin{tikzpicture}[x=0.62cm,y=0.62cm]
  \foreach \row/\len in {1/6,2/4,3/3,4/1}{
    \foreach \col in {1,...,\len}{
      \ifnum\col=\row
        \fill[gray!28]
          ({\col-1},{1-\row}) rectangle ({\col},{-\row});
      \fi
      \draw
        ({\col-1},{1-\row}) rectangle ({\col},{-\row});
    }
  }
  \draw[line width=1.4pt, dashed] (0,0) rectangle (3,-3);
\end{tikzpicture}
\end{minipage}
\hspace{-0.08\linewidth}
\begin{minipage}[c]{0.44\linewidth}
\centering
\renewcommand{\arraystretch}{1.22}
\begin{tabular}{c|ccc}
  diagonal box $i$ & $1$ & $2$ & $3$ \\ \hline
  arm length $a_i$ & $5$ & $2$ & $0$ \\
  leg length $b_i$ & $3$ & $1$ & $0$
\end{tabular}

\medskip
\[
  \lambda=(5,2,0\mid 3,1,0),
  \qquad
  r=3.
\]
\end{minipage}

\caption{Frobenius data for $\lambda=(6,4,3,1)$. The grey boxes on the
main diagonal are read from the top left to the bottom right, and the dashed
$3\times3$ square highlights the Frobenius rank. The corresponding arm and
leg lengths are listed separately.}
\label{fig:frobenius-coordinates}
\end{adjustwidth}
\end{figure}
\vspace{-3mm}
\subsection{Symmetric functions}
\label{subseq:symmetric-functions} For a positive integer $m$, the $m$th
power sum symmetric function is
\begin{equation*}
 p_m(z_1,\ldots,z_N):=z_1^m+\cdots+z_N^m.
\end{equation*}
For a partition $\mu=(\mu_1,\ldots,\mu_s)$, write
$p_\mu:=p_{\mu_1}\cdots p_{\mu_s}$. If $z_1,\ldots,z_N$ are the
eigenvalues of a matrix $U\in U(N)$, then
\begin{equation}
\label{eq:power-sum-trace}
 p_m(U):=p_m(z_1,\ldots,z_N)=\operatorname{Tr}(U^m).
\end{equation}

To relate the power sums to Schur functions, we note that partitions describe the cycle structure of permutations in the symmetric group $S_n$. Every permutation in $S_n$ decomposes into disjoint
cycles, whose lengths form a partition $\mu\vdash n$, called its \emph{cycle
type}. Characters
take the same value on permutations with the same cycle type, so it is enough
to index these values by $\mu$. For $\lambda,\mu\vdash n$, let
$\chi^\lambda_\mu$ denote the value of the irreducible character indexed by
$\lambda$ on permutations of cycle type $\mu$.

The Frobenius character formula then gives
\begin{equation}
\label{eq:frobenius-character-formula}
 p_\mu=\sum_{\lambda\vdash n}\chi^\lambda_\mu s_\lambda,
 \qquad \mu\vdash n.
\end{equation}
Thus the irreducible character values of $S_n$ are encoded by the change of
basis from power sums to Schur functions: these functions admit their own character
interpretation for the unitary group. When $\ell(\lambda)\leq N$, the function $s_\lambda(U)$ is an irreducible
character of $U(N)$; when $\ell(\lambda)>N$, it vanishes in $N$ variables.
Haar orthogonality therefore gives
\begin{equation}
\label{eq:schur-orthogonality}
 \int_{U(N)}s_\lambda(U)\overline{s_\mu(U)}\,dU
 =
 \begin{cases}
  1,&\lambda=\mu\text{ and }\ell(\lambda)\leq N,\\
  0,&\text{otherwise.}
 \end{cases}
\end{equation}

Symmetric functions have been evaluated at finite sets of variables thus far,
in particular at the eigenvalues of a unitary matrix. It is useful to allow a
broader notion of evaluation. A \emph{specialisation} is a rule $\rho$ that
assigns a scalar $f(\rho)$ to every symmetric function $f$, while preserving
addition and multiplication. Equivalently, it is an algebra homomorphism
\begin{equation*}
 \rho:\Lambda\longrightarrow\mathbb{C},
\end{equation*}
where $\Lambda$ denotes the algebra of symmetric functions. Since the power
sums $p_1,p_2,\ldots$ generate $\Lambda$ over $\mathbb{C}$, a specialisation
is determined by the values
\begin{equation*}
 p_m(\rho),\qquad m\geq1.
\end{equation*}
These values then determine the value of every symmetric function.

\subsection{The stable range}
\label{subsec:stable-range}

The partition notation gives a particularly simple evaluation of
$I_k(n;N)$ in the stable range, by which we mean $k\leq n\leq N$.  For a
partition $\lambda$, set
\begin{equation*}
 z_\lambda:=\prod_{j\geq1}j^{m_j(\lambda)}m_j(\lambda)!.
\end{equation*}
The Diaconis-Shahshahani trace moment formula \cite{kn:diasha} states that, for
$\lambda,\mu\vdash n$ with $n\leq N$,
\begin{equation}
\label{eq:DS-stable}
 \int_{U(N)}p_\lambda(U)\overline{p_\mu(U)}\,dU
 =
 \begin{cases}
  z_\lambda,&\lambda=\mu,\\
  0,&\lambda\neq\mu.
 \end{cases}
\end{equation}
Grouping the ordered
compositions $j_1+\cdots+j_k=n$ by their associated partition gives
\begin{equation*}
 \sum_{\substack{j_1+\cdots+j_k=n\\j_i\geq1}}
 p_{j_1}(U)\cdots p_{j_k}(U)
 =
 \sum_{\substack{\lambda\vdash n\\\ell(\lambda)=k}}
 \frac{k!}{\prod_{j\geq1}m_j(\lambda)!}\,p_\lambda(U).
\end{equation*}
Applying \eqref{eq:DS-stable} and using the number of orderings of each
partition, we obtain
\begin{align*}
 I_k(n;N)
 &=
 (k!)^2
 \sum_{\substack{\lambda\vdash n\\\ell(\lambda)=k}}
 \prod_{j\geq1}\frac{j^{m_j(\lambda)}}{m_j(\lambda)!}=
 k!
 \sum_{\substack{j_1+\cdots+j_k=n\\j_i\geq1}}
 j_1\cdots j_k.
\end{align*}
Writing $j_i=r_i+1$ and iterating the standard binomial convolution \cite[(5.26), p.~169]{kn:graham-knuth94} gives
\begin{equation*}
 \sum_{\substack{j_1+\cdots+j_k=n\\j_i\geq1}}j_1\cdots j_k
 =
 \sum_{\substack{r_1+\cdots+r_k=n-k\\r_i\geq0}}
 \prod_{i=1}^k\binom{r_i+1}{1}
 =
 \binom{n+k-1}{2k-1}.
\end{equation*}
Consequently,
\begin{equation}
\label{eq:Ik-stable-range}
 I_k(n;N)=k!\binom{n+k-1}{2k-1},
 \qquad k\leq n\leq N,
\end{equation}
and hence
\begin{equation}
\label{eq:delta-stable-range}
 \delta_k(c)=\frac{k!}{(2k-1)!}c^{2k-1},
 \qquad 0<c\leq1.
\end{equation}
The boundary at $c=1$ becomes transparent in the Schur basis. Combining
\eqref{eq:frobenius-character-formula} and \eqref{eq:schur-orthogonality} gives
\begin{equation*}
\int_{U(N)}p_\lambda(U)\overline{p_\mu(U)}\,dU
 =
 \sum_{\substack{\nu\vdash n\\\ell(\nu)\leq N}}
 \chi^\nu_\lambda\chi^\nu_\mu.
\end{equation*}
When $n\leq N$, every partition $\nu\vdash n$ satisfies
$\ell(\nu)\leq n\leq N$, so no terms are removed from this sum. It is therefore
the full character-orthogonality relation
\begin{equation*}
 \sum_{\nu\vdash n}\chi^\nu_\lambda\chi^\nu_\mu
 =\begin{cases}
 z_\lambda,&\lambda=\mu,\\
 0,&\lambda\neq\mu,
 \end{cases}
\end{equation*}
which gives \eqref{eq:DS-stable}. Once $n>N$, partitions with more than $N$
parts occur, but their Schur functions vanish in $N$ variables. Removing these
terms leaves only a partial character sum, so the cancellations responsible
for orthogonality are incomplete: for $\lambda\neq\mu$, the pairing of
$p_\lambda$ and $p_\mu$ need no longer vanish. As a result, the partition sum
for $I_k(n;N)$ does not reduce to the stable-range binomial formula. General exact Haar-integration methods remain available for $n > N$; Weingarten calculus \cite{kn:weingarten78, kn:collinssniady06} is one such approach which expands unitary matrix moments as finite sums over permutations, but it does not collapse here to a comparably simple form. In the scaling $n=\lfloor cN\rfloor$ with $c>0$, direct asymptotic analysis of either formula is difficult because its indexing set grows with $N$: the character sum ranges over partitions of $n$, while the Weingarten expansion involves permutations in $S_n$. In both cases, one must control cancellations across the full sum. This is the regime addressed by the lattice
asymptotics below.

\subsection{Lattice point estimates}
\label{subsec:lattice-point-estimates}

Both lattice models below are analysed by comparing integer points with the
volume of a continuous region. A \emph{convex polytope} in $\mathbb R^D$
is a bounded region cut out by finitely many linear inequalities. Each
inequality confines the polytope to one side of a hyperplane. At an interior
point, the relevant inequalities are strict; when one becomes an equality,
the point is restricted to the corresponding hyperplane and one degree of
freedom is lost. A boundary face of dimension $D-1$ is called a
\emph{facet}. We write $\operatorname{vol}_D$ for $D$-dimensional
Euclidean volume.

Davenport's theorem compares the number of lattice points in a convex
region with its volume. In the applications below, the regions have diameter
of order $N$, so the error is of order $N^{D-1}$, one power of $N$ smaller
than the main volume.

\begin{theorem}[Davenport, \cite{kn:davenport51}]
\label{thm:davenport}
Let $S\subset\mathbb R^D$ be a convex region contained in a closed ball of
radius $\rho$. Then
\begin{equation}
\label{eq:davenport}
 \#(S\cap\mathbb Z^D)
 =
 \operatorname{vol}_D(S)
 +O_D(\rho^{D-1}+1).
\end{equation}
\end{theorem}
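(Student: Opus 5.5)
Davenport's theorem (Theorem~\ref{thm:davenport}) is a classical result that this paper cites and does not prove, so the plan below reconstructs the standard argument. The proof is by induction on the dimension $D$, with the induction hypothesis applied to the sections of $S$ by lattice hyperplanes.

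\textbf{Base case $D=1$.} A convex region in $\mathbb R$ is an interval $[a,b]$, possibly with open ends, of length $b-a\le 2\rho$. It contains $b-a+\theta$ integers with $|\theta|\le1$, which is the claim with error $O(1)$.

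\textbf{Inductive step.} Suppose the estimate holds in dimension $D-1$. For each integer $t$, let $S_t=\{x'\in\mathbb R^{D-1}:(x',t)\in S\}$ be the slice of $S$ at height $x_D=t$.
\begin{enumerate}
\item Each slice $S_t$ is convex and lies in a ball of radius $\rho$.
\item Only $t$ in an interval of length at most $2\rho$ give nonempty slices, so there are $O(\rho+1)$ of them.
\item Summing the induction hypothesis over $t$ gives
\[
\#(S\cap\mathbb Z^D)=\sum_t \operatorname{vol}_{D-1}(S_t)+O\big((\rho+1)(\rho^{D-2}+1)\big).
\]
\end{enumerate}

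It remains to compare the Riemann sum $\sum_t f(t)$, where $f(t)=\operatorname{vol}_{D-1}(S_t)$, with $\int f=\operatorname{vol}_D(S)$. By Brunn--Minkowski, $f^{1/(D-1)}$ is concave on its support interval. Hence $f$ is unimodal: nondecreasing and then nonincreasing. For a unimodal function, the difference between the integer-point sum and the integral is bounded by $O(\max f)$. Since $\max f\le C_D\rho^{D-1}$, this discrepancy is $O(\rho^{D-1})$. Combining with the error from step~3 gives $O_D(\rho^{D-1}+1)$.

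\textbf{Main obstacle.} The delicate point is the bookkeeping of lower-dimensional terms. The cross terms such as $\rho^{D-2}$ and $\rho$ have to be absorbed into $\rho^{D-1}+1$, which works via $\rho^j\le\rho^{D-1}+1$ for $0\le j\le D-1$. One also has to handle degenerate slices, namely empty ones and those of lower dimension, which have volume zero. Both points are routine once the unimodality estimate is in place.
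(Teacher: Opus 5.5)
The paper gives no proof of this statement. It quotes Davenport and uses \eqref{eq:davenport} as a black box, with $D=2k-1$, in the proof of Theorem~\ref{thm:delta-cycle-formula}. Your induction is a correct proof of the convex case as stated:
\begin{itemize}
\item The slices lie in $(D-1)$-balls of radius at most $\rho$, and at most $2\rho+1$ heights contribute.
\item The accumulated inductive error is $O\bigl((\rho+1)(\rho^{D-2}+1)\bigr)=O(\rho^{D-1}+1)$.
\item Convexity of $S$ gives $S_{(1-\lambda)t_0+\lambda t_1}\supseteq(1-\lambda)S_{t_0}+\lambda S_{t_1}$, so Brunn--Minkowski makes $f^{1/(D-1)}$ concave wherever the slices are non-empty.
\item Splitting at the mode and comparing each monotone piece with its integral gives $\bigl|\sum_{t\in\mathbb Z}f(t)-\int f\bigr|\leq 2\sup f\ll_D\rho^{D-1}$. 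No closedness of $S$ is needed.
\end{itemize}

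Your route differs from the two nearby arguments in an instructive way.

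Davenport's theorem is more general. It assumes only that axis-parallel lines meet the region, and its coordinate projections, in at most $h$ intervals, and it bounds the error by volumes of coordinate projections. So it has no unimodality available. There, the one-dimensional comparison is made on axis-parallel lattice lines, where the interval hypothesis bounds the discrepancy by $h$ per line. The sections are taken at every real height and integrated, so no sum over integer heights ever has to be compared with an integral. Convexity, through Brunn--Minkowski, is exactly what lets you compare at integer heights instead.

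The paper's own lattice-to-volume step, Lemma~\ref{lem:weighted-lattice-volume}, uses a third method. It places unit cubes at the lattice points and confines the discrepancy to a $\sqrt D$-neighbourhood of the boundary, whose volume is controlled through a uniform bound on the number of facets. That method tolerates the $C^1$ weights $\mathcal P_t(\mathbf m/N)$ the paper needs. Your slicing step does not directly handle such weights, since weighted slice integrals need not be unimodal. On the other hand, for arbitrary convex sets the cube method would need Steiner-type bounds on the boundary neighbourhood (monotonicity of surface area under inclusion). Your induction handles all convex sets directly and produces the $+1$ term automatically.
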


The signed lattice model for $I_k(n;N)$, stated in Proposition~\ref{prop:signed-lattice-count}, expresses $I_k(n;N)$
as a signed combination of ordinary lattice counts; Davenport's theorem applies directly to this case. The Frobenius-coordinate model, recorded in Lemma~\ref{lem:weighted-lattice-count}, is weighted: a lattice point $\mathbf m$ contributes a smooth function evaluated at $\mathbf m/N$ in the large-$N$ limit. Therefore, for the proof of Theorem~\ref{thm:leading-volume}, we require a weighted analogue of Davenport's theorem adapted to the family of Frobenius-coordinate polytopes arising there; the lemma below is the weighted
form that will be used. For a continuously differentiable function
$\mathcal P$, the norm $\|\mathcal P\|_{C^1}$ denotes a common bound for
$|\mathcal P|$ and for all its first partial derivatives.

\begin{lemma}
\label{lem:weighted-lattice-volume}
Let $D\geq1$ be an integer, and let $\{V_t\}_{t\in T}$ be a family of
$D$-dimensional convex polytopes in $\mathbb R^D$, contained in a common
compact set and having a uniformly bounded number of facets. Let $U$ be a
fixed, open neighbourhood of this compact set, and let
$\mathcal P_t\in C^1(U)$ satisfy
\begin{equation*}
 \sup_{t\in T}\|\mathcal P_t\|_{C^1(U)}<\infty.
\end{equation*}
Then, as $N\to\infty$, uniformly for $t\in T$,
\begin{equation}
\label{eq:weighted-lattice-volume}
 \sum_{\mathbf m\in NV_t\cap\mathbb Z^D}
 \mathcal P_t(\mathbf m/N)
 =
 N^D\int_{V_t}\mathcal P_t(\mathbf z)\,d\mathbf z
 +O(N^{D-1}).
\end{equation}
The same estimate remains valid if the lattice points lying on any collection of facets of $NV_t$ are omitted from the sum.
\end{lemma}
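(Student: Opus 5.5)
The plan is to write the lattice sum as a sum over unit cubes and compare each cube's contribution with the integral over the same cube, splitting cubes into interior ones and those meeting the boundary. Set $S_t:=NV_t$ and $f_t(\mathbf x):=\mathcal P_t(\mathbf x/N)$. Then $\|\nabla f_t\|_\infty\le N^{-1}\|\mathcal P_t\|_{C^1(U)}$ on $NU$, and the change of variables $\mathbf x=N\mathbf z$ gives
\[
N^D\int_{V_t}\mathcal P_t(\mathbf z)\,d\mathbf z=\int_{S_t}f_t(\mathbf x)\,d\mathbf x .
\]
So it suffices to prove
\[
\sum_{\mathbf m\in S_t\cap\mathbb Z^D}f_t(\mathbf m)=\int_{S_t}f_t+O(N^{D-1}).
\]

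For each $\mathbf m\in\mathbb Z^D$, let $Q_{\mathbf m}:=\mathbf m+[-\tfrac12,\tfrac12)^D$; these cubes tile $\mathbb R^D$. Call $\mathbf m$ \emph{interior} if $Q_{\mathbf m}\subset S_t$, and \emph{boundary} if $Q_{\mathbf m}$ meets $\partial S_t$. For interior $\mathbf m$, the mean value theorem gives
\[
\Bigl|f_t(\mathbf m)-\int_{Q_{\mathbf m}}f_t\Bigr|\le \tfrac{\sqrt D}{2}\,\|\nabla f_t\|_\infty=O(N^{-1}).
\]
There are $O(N^D)$ interior cubes, since they lie in $NK$ with $K$ the common compact set (for $N$ large, $\sqrt D$-neighbourhoods of $S_t$ stay inside $NU$). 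Summing, the interior contributions differ from $\int f_t$ over their union by $O(N^{D-1})$.

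The main step is counting the boundary cubes. Every such cube lies within distance $\sqrt D$ of $\partial S_t$, so their number is at most $\operatorname{vol}_D\bigl((\partial S_t)_{\sqrt D}\bigr)$ up to a constant, where $(\cdot)_\rho$ denotes the $\rho$-neighbourhood. Since $\partial S_t$ is a union of at most $F$ facets, each a convex set of diameter $O(N)$ lying in a hyperplane, the $\sqrt D$-neighbourhood of each facet has volume $O(N^{D-1})$. This bound is uniform in $t$, because $F$ and the diameters are uniformly bounded.

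Equivalently, by Steiner's formula, the parallel body volume $\operatorname{vol}(S_t+B_\rho)-\operatorname{vol}(S_t\ominus B_\rho)$ is controlled by the surface area, and surface area is monotone under inclusion of convex sets. Hence the surface area of $S_t$ is at most that of a ball of radius $O(N)$, which is $O(N^{D-1})$. This bound does not even need the facet count; the facet count is only needed for the omitted-facet variant.

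On the boundary cubes, both the lattice sum and the integral are bounded by $\sup|f_t|$ times the number of boundary cubes, which is $O(N^{D-1})$. Combining the interior and boundary estimates gives \eqref{eq:weighted-lattice-volume} uniformly in $t$.

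For the variant in which the lattice points on some facets are omitted, each omitted point $\mathbf m$ lies on $\partial S_t$, so $Q_{\mathbf m}$ is a boundary cube. The omitted points therefore number $O(N^{D-1})$, and each contributes $O(1)$, so the estimate is unchanged. The only delicate point is uniformity, and it is handled by the uniform $C^1$ bound together with the common compact set.
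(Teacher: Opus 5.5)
Your proof is correct and follows essentially the same route as the paper. Both arguments tile space by unit cubes, compare $\mathcal P_t(\mathbf m/N)$ with its cube average by the mean value theorem at cost $O(N^{-1})$ per point, and absorb everything else into the $O(N^{D-1})$ volume of the $\sqrt D$-neighbourhood of $\partial(NV_t)$, bounded facet by facet.

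The differences are only bookkeeping. The paper attaches cubes $\mathbf m+[0,1)^D$ to all lattice points of $NV_t$ and compares their union with $NV_t$ through a symmetric difference, which forces it to evaluate $\mathcal P_t$ slightly outside $V_t$. It also counts the points on omitted facets by slicing along hyperplanes. You instead split into interior and boundary cubes; take ``boundary'' to mean not contained in $NV_t$, so the two classes are disjoint. You then absorb the omitted points into the boundary-cube count. Combined with your Steiner remark, this makes the facet-count hypothesis unnecessary even for the omitted-facet variant.
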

\begin{proof}
To relate the discrete sum to an integral, we associate a unit cube to
each lattice point. Let $\mathcal C$ denote the common compact set containing
all the polytopes $V_t$. As $\mathcal{C}$ is bounded, we may choose a fixed bounded box
\begin{equation}
\label{eq:B-definition}
 B=\prod_{j=1}^D[a_j,b_j]
\end{equation}
containing every $V_t$. For each $\mathbf m\in\mathbb Z^D$, define the
half-open unit cube
\begin{equation}
\label{eq:Q-unit-cube-definition}
 Q_{\mathbf m}:=\mathbf m+[0,1)^D.
\end{equation}
The cubes $Q_{\mathbf m}$, as $\mathbf m$ ranges over $\mathbb Z^D$, tile
$\mathbb R^D$, and each has volume $1$. The half-open convention makes the cubes pairwise disjoint and ensures that every point of $\mathbb R^D$ belongs to exactly one of them. We now replace each lattice point $\mathbf m\in NV_t\cap\mathbb Z^D$ by its
corresponding unit cube and set
\begin{equation}
\label{eq:union-of-cubes-W}
 W_{N,t}:=
 \bigcup_{\mathbf m\in NV_t\cap\mathbb Z^D}Q_{\mathbf m}.
\end{equation}
Thus $W_{N,t}$ is a cubical approximation to $NV_t$.
This construction is useful because the summand
$\mathcal P_t(\mathbf m/N)$ in \eqref{eq:weighted-lattice-volume} can be compared with the integral of
$\mathcal P_t(\mathbf u/N)$ over the unit cube $Q_{\mathbf m}$. Summing
these cube integrals will then produce an integral over $W_{N,t}$.

It will also be important to have a uniform bound for the number of cubes
involved. Since $V_t\subset B$, we have $ NV_t\subset NB$ for every $t$. In each coordinate direction, the box $NB$ has length
$O(N)$ and hence contains $O(N)$ possible integer coordinates. It follows that
\begin{equation*}
 \#(NV_t\cap\mathbb Z^D)=O(N^D),
\end{equation*}
so $W_{N,t}$ is the union of $O(N^D)$ unit cubes. All the implied constants
depend only on the fixed box $B$ and on $D$, and are therefore uniform in $t$. Put $M:=\sup_{t\in T}\|\mathcal P_t\|_{C^1(U)}$.
For $\mathbf m\in NV_t\cap\mathbb Z^D$ and
$\mathbf u\in Q_{\mathbf m}$, we have
$\|\mathbf u-\mathbf m\|\leq\sqrt D$.
Since $\mathcal C$ is compact and $U$ is an open neighbourhood
of $\mathcal C$, the segment joining $\mathbf m/N$ to
$\mathbf u/N$ lies in $U$ for all sufficiently large $N$,
uniformly in $t,\mathbf m,\mathbf u$.
The mean value theorem therefore gives
\begin{equation*}
 \left|
 \mathcal P_t(\mathbf u/N)-\mathcal P_t(\mathbf m/N)
 \right|
 \leq \frac{DM}{N}.
\end{equation*}
As $Q_{\mathbf m}$ has volume $1$, it follows that
\begin{equation*}
 \mathcal P_t(\mathbf m/N)
 =
 \int_{Q_{\mathbf m}}\mathcal P_t(\mathbf u/N)\,d\mathbf u
 +O(N^{-1}).
\end{equation*}
There are $O(N^D)$ such lattice points; summing the preceding identity over
$\mathbf m\in NV_t\cap\mathbb Z^D$ gives a total error
\begin{equation*}
 O(N^D)\,O(N^{-1})=O(N^{D-1}).
\end{equation*}
The half-open cubes $Q_{\mathbf m}$ are pairwise disjoint and
their union is $W_{N,t}$, so
\begin{equation*}
 \sum_{\mathbf m\in NV_t\cap\mathbb Z^D}
 \int_{Q_{\mathbf m}}\mathcal P_t(\mathbf u/N)\,d\mathbf u
 =
 \int_{W_{N,t}}\mathcal P_t(\mathbf u/N)\,d\mathbf u.
\end{equation*}
And this gives
\begin{equation}
\label{eq:weighted-cube-comparison}
 \sum_{\mathbf m\in NV_t\cap\mathbb Z^D}
 \mathcal P_t(\mathbf m/N)
 =
 \int_{W_{N,t}}\mathcal P_t(\mathbf u/N)\,d\mathbf u
 +O(N^{D-1}).
\end{equation}

It remains to compare $W_{N,t}$ with $NV_t$; the two regions agree except near the boundary. Denoted by
$\partial(NV_t)$, the boundary of $NV_t$ is the union of the facets of $NV_t$. Recall that a facet is a flat face of dimension $D-1$, and is therefore one dimension lower than the $D$-dimensional polytope itself. For two sets $A$ and $B$, their \emph{symmetric difference} is
\begin{equation}
\label{eq:symmetric-difference-def}
 A\mathbin{\triangle}B
 :=
 (A\setminus B)\cup(B\setminus A);
\end{equation}
it is the set of points belonging to exactly one of $A$ and $B$.

Let $\mathbf x\in W_{N,t}\mathbin{\triangle}NV_t$, and let
$Q_{\mathbf m}$ be the unique half-open unit cube containing $\mathbf x$.
By the definition of $W_{N,t}$ in \eqref{eq:union-of-cubes-W}, the cube
$Q_{\mathbf m}$ was selected if and only if $\mathbf m\in NV_t$. Because the cubes are pairwise disjoint and $\mathbf x\in Q_{\mathbf m}$, it
follows that
\begin{equation*}
 \mathbf x\in W_{N,t}
 \quad\Longleftrightarrow\quad
 \mathbf m\in NV_t.
\end{equation*}
Therefore, as $\mathbf x$ belongs to the symmetric difference, exactly
one of $\mathbf x$ and $\mathbf m$ belongs to $NV_t$. Since the line
segment joining them lies in $Q_{\mathbf m}$, it must pass through the
boundary of $NV_t$ and we have
\begin{equation*}
 Q_{\mathbf m}\cap\partial(NV_t)\neq\varnothing.
\end{equation*}
We now choose $\mathbf y$ in this intersection; the points $\mathbf x$ and
$\mathbf y$ lie in the same unit cube, whose diameter is $\sqrt D$, so $\|\mathbf x-\mathbf y\|\leq\sqrt D$.
This means that every point where $W_{N,t}$ and $NV_t$ differ lies in a
fixed-width neighbourhood of $\partial(NV_t)$. Equivalently,
\begin{equation}
\label{eq:symmetric-difference-boundary-neighbourhood}
 W_{N,t}\mathbin{\triangle}NV_t
 \subset
 \left\{
  \mathbf x\in\mathbb R^D:
  \operatorname{dist}\bigl(\mathbf x,\partial(NV_t)\bigr)
  \leq\sqrt D
 \right\}.
\end{equation}

As every $V_t$ lies in the fixed bounded box $B$, each facet of $V_t$
has diameter $O(1)$, uniformly in $t$. The dilation from $V_t$ to $NV_t$ multiplies every distance by $N$, so each
facet of $NV_t$ has diameter $O(N)$.

Let $\mathcal F$ be a facet of $NV_t$, and let $H$ be the hyperplane
containing it. Each line perpendicular to $H$ is determined by its unique
point of intersection $\mathbf p$ with $H$. Such a line can meet the
$\sqrt D$-neighbourhood of $\mathcal F$ only if $\mathbf p$ lies within
distance $\sqrt D$ of $\mathcal F$. Fixing any point of $\mathcal F$ as
a centre, all these points $\mathbf p$ lie in a ball of radius $O(N)$ in
$H$. This ball has  $(D-1)$-dimensional volume $O(N^{D-1})$ within $H$, while the part of the
neighbourhood on each perpendicular line has length at most $2\sqrt D$.
Tonelli's theorem
\cite[Theorem~2]{vanDoorn2021} calculates the volume of the
neighbourhood by adding these lengths over the corresponding points
$\mathbf p\in H$, giving
\begin{equation*}
 \operatorname{vol}_D
 \left(
  \left\{
   \mathbf x\in\mathbb R^D:
   \operatorname{dist}(\mathbf x,\mathcal F)\leq\sqrt D
  \right\}
 \right)
 \leq
 2\sqrt D\,O(N^{D-1})
 =
 O(N^{D-1}).
\end{equation*}

By hypothesis, $V_t$, and therefore $NV_t$, has at most $J$
facets, where $J$ is independent of $t$. This also means that the boundary $\partial(NV_t)$ is the union of at most $J$ facets. Every
point lying within distance $\sqrt D$ of the boundary lies within
distance $\sqrt D$ of at least one of these facets, so the
$\sqrt D$-neighbourhood of the boundary is the union of at most $J$
facet neighbourhoods. Each of these
has volume $O(N^{D-1})$ by the estimate above, which gives an $O(N^{D-1})$ volume for the $\sqrt D$-neighbourhood of $\partial(NV_t)$ as it is the volume of their
union. Equation \eqref{eq:symmetric-difference-boundary-neighbourhood} states
that $W_{N,t}\mathbin{\triangle}NV_t$ is contained in this neighbourhood. Overall, we obtain
\begin{align}
\label{eq:symmetric-difference-volume}
 \operatorname{vol}_D
 \bigl(W_{N,t}\mathbin{\triangle}NV_t\bigr)
 &\leq
 \operatorname{vol}_D
 \left(
  \left\{
   \mathbf x\in\mathbb R^D:
   \operatorname{dist}\bigl(\mathbf x,\partial(NV_t)\bigr)
   \leq\sqrt D
  \right\}
 \right) =
 O(N^{D-1}).
\end{align}

For all sufficiently large $N$, we have $\mathbf u/N\in U$ throughout
$W_{N,t}\cup NV_t$: this follows from the preceding segment argument on
$W_{N,t}$, while on $NV_t$ it is immediate from
$\mathbf u/N\in V_t\subset U$. Hence the uniform $C^1$ bound gives
$|\mathcal P_t(\mathbf u/N)|\leq M$ on both regions. Their common part
contributes equally to the two integrals, while the remaining points
belong to their symmetric difference. Therefore,
\begin{align*}
 \left|
  \int_{W_{N,t}}\mathcal P_t(\mathbf u/N)\,d\mathbf u
  -\int_{NV_t}\mathcal P_t(\mathbf u/N)\,d\mathbf u
 \right| &\leq
 \int_{W_{N,t}\mathbin{\triangle}NV_t}
 \left|\mathcal P_t(\mathbf u/N)\right|\,d\mathbf u \\
 &\leq
 M\operatorname{vol}_D
 \bigl(W_{N,t}\mathbin{\triangle}NV_t\bigr)
 =
 O(N^{D-1}),
\end{align*}
where the final equality follows from
\eqref{eq:symmetric-difference-volume}. Thus
\begin{equation}
\label{eq:weighted-boundary-comparison}
 \int_{W_{N,t}}\mathcal P_t(\mathbf u/N)\,d\mathbf u
 =
 \int_{NV_t}\mathcal P_t(\mathbf u/N)\,d\mathbf u
 +O(N^{D-1}).
\end{equation}
Combining this with \eqref{eq:weighted-cube-comparison} gives
\begin{equation*}
 \sum_{\mathbf m\in NV_t\cap\mathbb Z^D}
 \mathcal P_t(\mathbf m/N)
 =
 \int_{NV_t}\mathcal P_t(\mathbf u/N)\,d\mathbf u
 +O(N^{D-1}).
\end{equation*}
Under the change of variables $\mathbf u=N\mathbf z$, we have
$d\mathbf u=N^D\,d\mathbf z$, and hence
\begin{equation*}
 \int_{NV_t}\mathcal P_t(\mathbf u/N)\,d\mathbf u
 =N^D\int_{V_t}\mathcal P_t(\mathbf z)\,d\mathbf z.
\end{equation*}
This proves \eqref{eq:weighted-lattice-volume}. To conclude, consider the effect of omitting lattice points on facets. A
facet of $NV_t$ lies on a hyperplane
\begin{equation*}
 \alpha_1x_1+\cdots+\alpha_Dx_D=\beta,
\end{equation*}
where at least one coefficient, say $\alpha_j$, is non-zero. Inside $NB$, there are
$O(N^{D-1})$ integer choices for the other $D-1$ coordinates. Once these coordinates are fixed, the equation determines $x_j$ uniquely, so there is at most
one possible integer value of $x_j$. Thus each facet contains $O(N^{D-1})$ lattice
points. Note that each $NV_t$ has at most $J$ facets and, as
$|\mathcal P_t(\mathbf m/N)|\leq M$, omitting the lattice points lying on
any chosen subset of these facets changes the sum by at most
\begin{equation*}
 M J\,O(N^{D-1})=O(N^{D-1}).
\end{equation*}
The implied constant is independent of $t$ and of the chosen subset.
\end{proof}

\subsection{Weighted lattice asymptotics}
\label{subsec:weighted-lattice-asymptotics}
We begin with a lemma which states the character expansion for $I_k(n;N)$ and enables us to interpret the matrix integral as a weighted lattice count.

\begin{lemma}
\label{lem:weighted-lattice-count}
Let $k,n,N \geq 1$ be integers. For a partition $\lambda\vdash n$, let $\chi^\lambda_\mu$ denote the
irreducible character of $S_n$ indexed by $\lambda$, evaluated on the
conjugacy class of cycle type $\mu$.  If
$\nu=(\nu_1,\ldots,\nu_k)$ is an ordered composition of $n$, write
$\chi^\lambda_\nu$ for the corresponding character value and put
\begin{equation}
\label{eq:Ak-lambda-def}
 A_k(\lambda)
 :=
 \sum_{\substack{\nu_1+\cdots+\nu_k=n\\ \nu_i\geq1}}
 \chi^\lambda_\nu.
\end{equation}
For an integer $r\geq1$, define the Frobenius coordinate set
\begin{equation}
\label{eq:Frobenius-coordinate-set}
 \mathcal F_r(n,N)
 :=
 \left\{
 (\mathbf a,\mathbf b)\in(\mathbb Z_{\geq0}^r)^2:
 \begin{aligned}
  &a_1>\cdots>a_r,\qquad N>b_1>\cdots>b_r,\\
  &\sum_{i=1}^r(a_i+b_i)=n-r
 \end{aligned}
 \right\}.
\end{equation}
For $(\mathbf a,\mathbf b)\in\mathcal F_r(n,N)$, write
$A_k(\mathbf a,\mathbf b):=A_k(\lambda)$, where
$\lambda=(a_1,\ldots,a_r\mid b_1,\ldots,b_r)$ is the corresponding
partition. Then
\begin{equation}
\label{eq:Ik-lattice-count}
 I_k(n;N)
 =
 \sum_{r\geq1}
 \sum_{(\mathbf a,\mathbf b)\in\mathcal F_r(n,N)}
 A_k(\mathbf a,\mathbf b)^2.
\end{equation}
Only finitely many of the sets $\mathcal F_r(n,N)$ are non-empty.
\end{lemma}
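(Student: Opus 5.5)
The plan is to expand the trace sum in the Schur basis and then apply Haar orthogonality. Setting $p_m(U)=\operatorname{Tr}(U^m)$ as in \eqref{eq:power-sum-trace}, the integrand of \eqref{eq:variance-rmt} is $\bigl|\sum_{\nu}p_{\nu_1}(U)\cdots p_{\nu_k}(U)\bigr|^2$. Here $\nu$ runs over ordered compositions of $n$ into $k$ positive parts. For each such $\nu$, the product $p_{\nu_1}\cdots p_{\nu_k}$ equals $p_\mu$, where $\mu\vdash n$ is the sorted partition. The character value $\chi^\lambda_\nu$ depends only on the cycle type, so $\chi^\lambda_\nu=\chi^\lambda_\mu$. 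The Frobenius character formula \eqref{eq:frobenius-character-formula}, evaluated at the eigenvalues of $U$, therefore gives
\begin{equation*}
\sum_{\nu}p_{\nu_1}(U)\cdots p_{\nu_k}(U)=\sum_{\lambda\vdash n}\Bigl(\sum_\nu\chi^\lambda_\nu\Bigr)s_\lambda(U)=\sum_{\lambda\vdash n}A_k(\lambda)\,s_\lambda(U).
\end{equation*}
Since the characters of $S_n$ are real-valued, $A_k(\lambda)\in\mathbb Z$. Expanding the modulus squared and integrating term by term with \eqref{eq:schur-orthogonality} then yields $I_k(n;N)=\sum_{\lambda\vdash n,\ \ell(\lambda)\leq N}A_k(\lambda)^2$. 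Only the orthogonality relation and real coefficients are used here. The Schur functions with $\ell(\lambda)>N$ vanish identically in $N$ variables, so dropping them is consistent.

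The second step reparametrises this sum in Frobenius coordinates. Every nonempty partition $\lambda\vdash n$ has rank $r=r(\lambda)\geq1$ and a unique representation $\lambda=(a_1,\ldots,a_r\mid b_1,\ldots,b_r)$ with strictly decreasing nonnegative sequences. Conversely, any such pair of sequences determines a partition. By \eqref{eq:frobenius-size-length}, the conditions $|\lambda|=n$ and $\ell(\lambda)\le N$ become $\sum(a_i+b_i)=n-r$ and $b_1+1\leq N$, i.e. $b_1<N$. This gives exactly the constraints defining $\mathcal F_r(n,N)$ in \eqref{eq:Frobenius-coordinate-set}. Hence $\lambda\mapsto(\mathbf a,\mathbf b)$ is a bijection from $\{\lambda\vdash n:\ell(\lambda)\le N\}$ onto $\bigsqcup_{r\geq1}\mathcal F_r(n,N)$. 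Substituting it yields \eqref{eq:Ik-lattice-count}.

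For finiteness, note that an element of $\mathcal F_r(n,N)$ has distinct nonnegative $a_i$ and $b_i$, so $\sum(a_i+b_i)\geq 2\binom r2=r(r-1)$. Since this sum equals $n-r$, we get $r^2\leq n$. So $\mathcal F_r(n,N)=\varnothing$ for $r>\sqrt n$, and each nonempty set is finite. Equivalently, the Frobenius rank cannot exceed the side of the Durfee square.

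The main point needing care is the first step. One must check that the sum over ordered compositions, rather than over partitions weighted by multiplicities, passes correctly through the Frobenius formula. It does, because the formula applies to each ordered product separately, before any grouping by partitions. One must also confirm that no terms with $\ell(\lambda)>N$ survive, which holds since those Schur functions are identically zero.
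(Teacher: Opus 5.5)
Your proposal is correct and follows essentially the same route as the paper. You apply the Frobenius character formula to each ordered composition to get $\sum_{\lambda\vdash n}A_k(\lambda)s_\lambda(U)$, use Schur orthogonality with real coefficients to reach $\sum_{\ell(\lambda)\le N}A_k(\lambda)^2$, regroup by Frobenius rank via \eqref{eq:frobenius-size-length}, and use $|\lambda|\ge r^2$ for finiteness. One small slip: realness of the characters only gives $A_k(\lambda)\in\mathbb R$, which is all the argument needs; integrality is true but follows from the integrality of $S_n$ characters.
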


\begin{proof} By definition,
\begin{equation*}
 I_{k}(n;N) = \int_{U(N)}|F_{k,n}(U)|^2 dU, \qquad F_{k,n}(U)
:=\sum_{\substack{\nu_1+\cdots+\nu_k=n\\\nu_i\geq1}}
   \operatorname{Tr}(U^{\nu_1})\cdots\operatorname{Tr}(U^{\nu_k}).
\end{equation*}
Using \eqref{eq:power-sum-trace}, $F_{k,n}(U)$ can be written as a sum of products of power-sum
symmetric functions at the eigenvalues of $U$. By \eqref{eq:frobenius-character-formula}, we express each power-sum product $p_\mu = p_{\mu_1} \cdots p_{\mu_k}$ in the Schur basis. Applying it to every
ordered composition $\nu=(\nu_1,\ldots,\nu_k)$ of $n$ gives
\begin{align}
 F_{k,n}(U)
 &=\sum_{\substack{\nu_1+\cdots+\nu_k=n\\\nu_i\geq1}}
   p_{\nu_1}(U)\cdots p_{\nu_k}(U)\nonumber\\
 &=\sum_{\substack{\nu_1+\cdots+\nu_k=n\\\nu_i\geq1}}
   \sum_{\lambda\vdash n}\chi^\lambda_\nu s_\lambda(U)\nonumber\\
 &=\sum_{\lambda\vdash n}A_k(\lambda)s_\lambda(U).
 \label{eq:F-Schur-expansion}
\end{align}
By definition, $I_k(n;N)$ is the Haar integral of $|F_{k,n}(U)|^2$.
Substituting \eqref{eq:F-Schur-expansion} and applying
\eqref{eq:schur-orthogonality} eliminates all cross terms.  Since the
characters of $S_n$ are real-valued, $|A_k(\lambda)|^2=A_k(\lambda)^2$,
and therefore
\begin{equation}\label{eq:Ik-partition-sum}
 I_k(n;N)
 =\sum_{\substack{\lambda\vdash n\\\ell(\lambda)\leq N}}
 A_k(\lambda)^2.
\end{equation}
If $\lambda$ has Frobenius rank $r$, the identities
\eqref{eq:frobenius-size-length} show that the conditions in
\eqref{eq:Ik-partition-sum} are precisely those defining
$\mathcal F_r(n,N)$.  Every partition has a unique Frobenius rank and unique
Frobenius coordinates, so grouping \eqref{eq:Ik-partition-sum} by rank proves
\eqref{eq:Ik-lattice-count}.  The sum is finite: the strict inequalities in
the Frobenius coordinates give $a_i,b_i\geq r-i$, and hence
$|\lambda|\geq r^2$.
\end{proof}

\begin{proof}[Proof of Theorem~\ref{thm:leading-volume}]
Let $k \geq 1$ be a fixed integer, and fix a non-empty compact set $\mathcal K\subset(0,\infty)$ for which we set $c^+ := \max \mathcal{K}$. For an integer $N \geq 1$ and $c\in \mathcal K$, put $n=\lfloor cN\rfloor$; all estimates below are uniform for such $c$. We begin by
identifying the leading polynomial term from the character weight and then
use Lemma~\ref{lem:weighted-lattice-volume} to replace the weighted lattice
sum by a volume, before removing the ordering constraints by symmetry.

\subsubsection{The leading character weight}
Using the Cauchy identity, we will first obtain an expression that sums over partitions of all sizes and then extract its degree-$n$ part, which we then compare with the expansion
\eqref{eq:F-Schur-expansion}. For each $u\in\mathbb C$, we define the specialisation $\mathcal A_u$ by
\begin{equation}
\label{eq:virtual-alphabet}
 p_j(\mathcal A_u)=ju,
 \qquad j\geq1.
\end{equation}
As noted in Subsection~\ref{subseq:symmetric-functions}, these values determine
$f(\mathcal A_u)$ for every symmetric function $f$. Moreover, since $f$ is a
polynomial in finitely many power sums, $f(\mathcal A_u)$ is a polynomial in
the parameter $u$. For two sets of variables $X$ and $Y$, the Cauchy identity in
power-sum form is
\begin{equation}
\label{eq:cauchy-general}
 \sum_\lambda s_\lambda(X)s_\lambda(Y)
 =
 \exp\!\left(
  \sum_{j\geq1}\frac{p_j(X)p_j(Y)}{j}
 \right).
\end{equation}
We interpret \eqref{eq:cauchy-general}, and the specialised identities
derived from it, one homogeneous degree in $Y$ at a time. In each fixed
degree only finitely many terms occur; after applying $\mathcal A_u$,
both sides are polynomials in $u$, so taking the coefficient of $u^k$
is legitimate.

Applying $\mathcal A_u$ to the symmetric functions in $X$ and using
\eqref{eq:virtual-alphabet}, we obtain
\begin{align*}
 \sum_\lambda s_\lambda(\mathcal A_u)s_\lambda(Y)
 &=
 \exp\!\left(
  \sum_{j\geq1}\frac{p_j(\mathcal A_u)p_j(Y)}{j}
 \right)=
 \exp\!\left(u\sum_{j\geq1}p_j(Y)\right).
\end{align*}
We suppress the variables $Y$ from now on, writing $p_j=p_j(Y)$ and
$s_\lambda=s_\lambda(Y)$. For a non-empty partition $\lambda$, we can write
\begin{equation}
\label{eq:c-expansion}
 s_\lambda(\mathcal A_u)
 =\sum_{m\geq1}c_m(\lambda)u^m.
\end{equation}
As $s_{\varnothing}(\mathcal A_u)=1$ and
$s_\lambda(\mathcal A_0)=0$ for $\lambda\neq\varnothing$, the positive
powers of $u$ arise only from non-empty partitions. Taking the coefficient of $u^k$, with $k\geq1$, in the specialised Cauchy
identity above gives
\begin{equation*}
 \sum_{\lambda, \, |\lambda| \geq 1} c_k(\lambda)s_\lambda
 =
 \frac{1}{k!}\biggl(\sum_{j\geq1}p_j\biggr)^k.
\end{equation*}
Here $p_j$ has degree $j$, so
$p_{\nu_1}\cdots p_{\nu_k}$ has degree
$\nu_1+\cdots+\nu_k$.  Similarly, $s_\lambda$ has degree
$|\lambda|$. It follows that the terms of degree $n$ are
\begin{align*}
 \sum_{\lambda\vdash n}c_k(\lambda)s_\lambda
 &=
 \frac{1}{k!}
 \sum_{\substack{\nu_1+\cdots+\nu_k=n\\ \nu_i\geq1}}
 p_{\nu_1}\cdots p_{\nu_k}=
 \frac{1}{k!}
 \sum_{\lambda\vdash n}A_k(\lambda)s_\lambda,
\end{align*}
where the second equality follows from
\eqref{eq:frobenius-character-formula} and the definition
\eqref{eq:Ak-lambda-def} of $A_k(\lambda)$. As the Schur functions are linearly independent, their coefficients on the two sides are equal. Therefore
\begin{equation}
\label{eq:Ak-specialized-Schur}
 c_k(\lambda)=\frac{A_k(\lambda)}{k!},
 \qquad \lambda\vdash n.
\end{equation}
We first evaluate $c_k(\lambda)$ when $\lambda$ is a hook. This is not a restriction to a special class of partitions in the final argument; hooks are considered first as they serve as building blocks for Schur functions of arbitrary partitions. Namely, if a partition $\lambda = (a_1, \ldots, a_r | b_1, \ldots,b_r)$ has Frobenius rank $r$, then  Giambelli's
formula \cite[Ex.~9, p.~47]{kn:macdonald1995} states
\begin{equation}
\label{eq:frobenius-gambelli}
 s_\lambda
 =\det_{1\leq i,j\leq r}
   \bigl[s_{(a_i\mid b_j)}\bigr],
\end{equation}
where
$(a\mid b)$ is the Frobenius notation for a hook as introduced in \eqref{eq:frob-hook}.
Thus, after computing the specialised hook functions
$s_{(a\mid b)}(\mathcal A_u)$, we will return to a general partition,
apply the specialisation entrywise to this determinant, and then extract the
coefficient of $u^k$. 

Let $h_q$ and $e_q$ denote the complete and elementary symmetric
functions, and define their generating series by
\begin{align*}
 H(x):=\sum_{q\geq0}h_qx^q
      =\exp \, \biggl(\sum_{j\geq1}\frac{p_jx^j}{j}\biggr), \qquad
 E(y):=\sum_{q\geq0}e_qy^q
      =\exp \, \biggl(\sum_{j\geq1}
        \frac{(-1)^{j-1}p_jy^j}{j}\biggr).
\end{align*}
These two identities follow from equations 2.10 and 2.10' in \cite{kn:macdonald1995}. For a hook $(a\mid b)$, the Jacobi--Trudi formula gives
\begin{equation*}
 s_{(a\mid b)}
 =
 \sum_{j=0}^{b}(-1)^j h_{a+1+j}e_{b-j}.
\end{equation*}
The Schur functions indexed by hooks therefore satisfy
\begin{align*}
 \sum_{a,b\geq0}s_{(a\mid b)}x^ay^b
 &=
 \sum_{a,b\geq0}\sum_{j=0}^{b}
 (-1)^j h_{a+1+j}e_{b-j}x^ay^b\\
 &=
 E(y)\sum_{a,j\geq0}
 (-1)^j h_{a+1+j}x^ay^j\\
 &=
 E(y)\frac{H(x)-H(-y)}{x+y}.
\end{align*}
As $H(-y)E(y)=1$, this becomes
\begin{equation}
\label{eq:hook-generating-identity}
 \sum_{a,b\geq0}s_{(a\mid b)}x^ay^b
 =
 \frac{H(x)E(y)-1}{x+y}.
\end{equation} 
Next, we apply the specialisation \eqref{eq:virtual-alphabet} to the
generating series $H(x)$ and $E(y)$. Define $H_{\mathcal A_u}(x) := \sum_{q\geq 0}h_q(\mathcal A_u)x^q$ and $E_{\mathcal A_u}(y) := \sum_{q\geq 0}e_q(\mathcal A_u)y^q$. We can write both as
\begin{align*}
 H_{\mathcal A_u}(x)
 &=\exp \biggl(
   \sum_{j\geq1}\frac{p_j(\mathcal A_u)x^j}{j}
  \biggr)
  =\exp \biggl(u\sum_{j\geq1}x^j\biggr)
  =\exp \biggl(\frac{ux}{1-x}\biggr),\\
 E_{\mathcal A_u}(y)
 &=\exp\!\biggl(
   \sum_{j\geq1}\frac{(-1)^{j-1}p_j(\mathcal A_u)y^j}{j}
  \biggr)
  =\exp \biggl(u\sum_{j\geq1}(-1)^{j-1}y^j\biggr)
  =\exp \biggl(\frac{uy}{1+y}\biggr).
\end{align*}
Substituting these expressions into
\eqref{eq:hook-generating-identity}, we obtain
\begin{align}
\label{eq:hook-series}
 \sum_{a,b\geq0}s_{(a\mid b)}(\mathcal A_u)x^ay^b
 &=\frac{1}{x+y}
   \left[
    \exp\!\left(
      u\left(\frac{x}{1-x}+\frac{y}{1+y}\right)
    \right)-1
   \right]\\ \nonumber
 &=\frac{1}{x+y}
   \left[
    \exp\!\left(
      u\frac{x+y}{(1-x)(1+y)}
    \right)-1
   \right]\\
 &=\sum_{m\geq1}
   \frac{(x+y)^{m-1}}{m!(1-x)^m(1+y)^m}u^m. \nonumber
\end{align}
We also arrive at a separate expression for the series in \eqref{eq:hook-series} by comparing with the coefficients in \eqref{eq:c-expansion}. For each hook $(a\mid b)$, define $\theta_m(a,b)$ by
\begin{equation}
\label{eq:theta-expansion}
 s_{(a\mid b)}(\mathcal A_u)
 =\sum_{m\geq1}\theta_m(a,b)u^m,
\end{equation}
and thus $\theta_m(a,b)=c_m((a\mid b))$. In particular, for a hook of
size $a+b+1=n$, \eqref{eq:Ak-specialized-Schur} gives
\begin{equation*}
A_k((a\mid b))=k!\,\theta_k(a,b).
\end{equation*}
Using \eqref{eq:theta-expansion}, we can write
\begin{align*}
 \sum_{a,b\geq0}s_{(a\mid b)}(\mathcal A_u)x^ay^b
 &=\sum_{a,b\geq0}\sum_{m\geq1}
   \theta_m(a,b)u^mx^ay^b=\sum_{m\geq1}
   \biggl( \,
    \sum_{a,b\geq0}\theta_m(a,b)x^ay^b
   \biggr)u^m.
\end{align*}
The coefficients of $u^m$ in these two expressions must agree.
Therefore
\begin{equation}
\label{eq:theta-generating-function}
 \sum_{a,b\geq0}\theta_m(a,b)x^ay^b
 =
 \frac{(x+y)^{m-1}}{m!(1-x)^m(1+y)^m},
 \qquad m\geq1.
\end{equation}
We now set $d=m-1$ and expand the factors on the right-hand side of
\eqref{eq:theta-generating-function},
\begin{align*}
 (x+y)^d
 &=\sum_{j=0}^{d}\binom{d}{j}x^{d-j}y^j, \qquad
 (1-x)^{-m}
 =\sum_{r\geq0}\binom{r+d}{d}x^r, 
\end{align*}
and $ (1+y)^{-m}$ similarly. For a fixed $j$, the term $x^ay^b$ is obtained by taking
$r=a-d+j$ and $s=b-j$. Hence
\begin{equation}
\label{eq:Qm-exact}
 (-1)^b\theta_m(a,b)
 =
 \frac1{m!}
 \sum_{j=0}^{d}(-1)^j\binom{d}{j}
 \binom{a+j}{d}
 \binom{b+d-j}{d}.
\end{equation}
Here, as usual, a binomial coefficient is zero when its upper entry is a
non-negative integer smaller than its lower entry. Denote the polynomial on
the right-hand side of \eqref{eq:Qm-exact} by \(Q_m(a,b)\), so that
\begin{equation}
\label{eq:theta-polynomial}
 \theta_m(a,b)=(-1)^bQ_m(a,b).
\end{equation}
We now establish the bound $\deg Q_m\leq m-1$ and identify all terms of
$Q_m$ of total degree $m-1$ in the variables $a$ and $b$. We use the fact that
\begin{equation*}
 \sum_{j=0}^{d}(-1)^j\binom{d}{j}f(j)=0
\end{equation*}
whenever $f$ is a polynomial in $j$ of degree less than $d$. Consider
the coefficient of $a^pb^q$ in \eqref{eq:Qm-exact}. The coefficient of
$a^p$ in $\binom{a+j}{d}$ has degree at most $d-p$ as a polynomial in
$j$, while the coefficient of $b^q$ in $\binom{b+d-j}{d}$ has degree at
most $d-q$. Their product therefore has degree at most
$2d-p-q$ in $j$. If $p+q>d$, this degree is less than $d$, and the
alternating sum vanishes. Thus
\begin{equation*}
 \deg Q_m\leq d=m-1.
\end{equation*}

Focusing on the terms of $Q_m$ with total degree $d$, the case
$d=0$ is immediate, so assume $d\geq1$ and fix $p,q\geq0$ with
$p+q=d$. When viewed as a polynomial in $j$, the coefficient of $a^p$ in
$\binom{a+j}{d}$ has highest-power term
\begin{equation*}
 \frac{1}{d!}\binom{d}{p}j^{d-p},
\end{equation*}
while the coefficient of $b^q$ in $\binom{b+d-j}{d}$ has highest-power term
\begin{equation*}
 \frac{(-1)^{d-q}}{d!}\binom{d}{q}j^{d-q}.
\end{equation*}
Since $p+q=d$, their product has highest power $j^d$. Using the alternating
binomial identity
\begin{equation*}
 \sum_{j=0}^{d}(-1)^j\binom{d}{j}j^d
 =(-1)^d d!,
\end{equation*}
together with $\binom{d}{p}=\binom{d}{q}$, and collecting the resulting
monomials over all $p+q=d$, we obtain the entire highest-degree part of
$Q_m$. Recalling that $d=m-1$, we denote it by
\begin{equation}
\label{eq:Qm-leading}
 \Theta_m(a,b)
 =
 \frac{1}{m!(m-1)!}
 \sum_{j=0}^{m-1}(-1)^j\binom{m-1}{j}^{\!2}
 a^{m-1-j}b^j.
\end{equation}
Thus $\Theta_m$ consists precisely of the terms of $Q_m$ having total degree
$m-1$. By substituting $n=m-1$, $x=a$ and $y=b$ into the polynomial identity \eqref{eq:Legendre-homogeneous-expansion}, we may express $\Theta_m$ in terms of a Legendre polynomial, as stated in \eqref{eq:Theta-legendre-def}.

This separation is useful because the terms in $\Theta_m$ determine the
leading order when the Frobenius coordinates are scaled by $N$. The remainder
$Q_m-\Theta_m$ has degree at most $m-2$ since
$\Theta_m$ is precisely the degree-$(m-1)$ part of $Q_m$.
For $(\mathbf a,\mathbf b)\in\mathcal F_r(n,N)$, the Frobenius coordinates
satisfy
\begin{equation*}
 0\leq \frac{a_i}{N}\leq c^+,
 \qquad
 0\leq \frac{b_j}{N}<1.
\end{equation*}
Thus the scaled arguments $(a_i/N,b_j/N)$ remain in a fixed compact region.
It follows uniformly in $i$ and $j$ that
\begin{equation}
\label{eq:theta-leading}
 \theta_m(a_i,b_j)
 =
 (-1)^{b_j}\left(
  N^{m-1}\Theta_m(a_i/N,b_j/N)
  +O_{m,\mathcal K}(N^{m-2})
 \right),
\end{equation}
with no error term when $m=1$.
We now use the previous hook calculations to recover the character weight for
$(\mathbf a,\mathbf b)\in\mathcal F_r(n,N)$, with $\lambda=(a_1,\ldots,a_r\mid b_1,\ldots,b_r)$
denoting the corresponding partition. Using \eqref{eq:frobenius-gambelli}, and applying the
specialisation $\mathcal A_u$ entrywise, gives
\begin{equation*}
 s_\lambda(\mathcal A_u)
 =
 \det_{1\leq i,j\leq r}
 \bigl[s_{(a_i\mid b_j)}(\mathcal A_u)\bigr]
 =
 \det_{1\leq i,j\leq r}
 \biggl[
   \sum_{m\geq1}\theta_m(a_i,b_j)u^m
 \biggr].
\end{equation*}
By multilinearity of the determinant in its rows, the coefficient of $u^k$
is therefore
\begin{equation*}
 c_k(\lambda)
 =
\sum_{\substack{m_1+\cdots+m_r=k\\m_i\geq1}}
 \det_{1\leq i,j\leq r}
 \bigl[\theta_{m_i}(a_i,b_j)\bigr].
\end{equation*}
Using \eqref{eq:theta-polynomial}, each entry in column $j$ contributes the
factor $(-1)^{b_j}$. Factoring these signs from the columns gives
\begin{equation*}
 c_k(\lambda)
 =
 (-1)^{b_1+\cdots+b_r}
 \sum_{\substack{m_1+\cdots+m_r=k\\m_i\geq1}}
 \det_{1\leq i,j\leq r}
 \bigl[Q_{m_i}(a_i,b_j)\bigr].
\end{equation*}
From \eqref{eq:Ak-specialized-Schur}, we have 
$A_k(\lambda)=k!\,c_k(\lambda)$ and, using
$A_k(\mathbf a,\mathbf b):=A_k(\lambda)$ for
$(\mathbf a,\mathbf b)\in\mathcal F_r(n,N)$, we obtain
\begin{equation}
\label{eq:Ak-determinant}
 A_k(\mathbf a,\mathbf b)
 =
 (-1)^{b_1+\cdots+b_r}k!
 \sum_{\substack{m_1+\cdots+m_r=k\\m_i\geq1}}
 \det_{1\leq i,j\leq r}
 \bigl[Q_{m_i}(a_i,b_j)\bigr].
\end{equation}
In particular, the sum is empty when $k<r$, so
$A_k(\mathbf a,\mathbf b)=0$ in that case. We rename the sum above with the $k!$ prefactor as $\mathcal Q_{k,r}(\mathbf a,\mathbf b)$, which then gives
\begin{equation*}
  A_k(\mathbf a,\mathbf b)
 =
 (-1)^{b_1+\cdots+b_r} \mathcal Q_{k,r}(\mathbf a,\mathbf b).
\end{equation*}
A polynomial is alternating in a set of variables if interchanging two of them
changes its sign. Swapping two $b$-variables swaps two determinant columns.
Swapping $a_i$ and $a_j$, and then relabelling $m_i$ and $m_j$ in the
composition sum, swaps the corresponding rows. Thus
$\mathcal Q_{k,r}$ is alternating separately in the $a$-variables and the
$b$-variables. Every
alternating polynomial is divisible by the corresponding Vandermonde
polynomial, so
\begin{equation*}
 \Delta(\mathbf a)\Delta(\mathbf b)
 \mid \mathcal Q_{k,r}(\mathbf a,\mathbf b),
 \qquad
 \Delta(\mathbf a):=\prod_{i<j}(a_j-a_i),
 \quad
 \Delta(\mathbf b):=\prod_{i<j}(b_j-b_i).
\end{equation*}
The product $\Delta(\mathbf a)\Delta(\mathbf b)$ has total degree
$r(r-1)$.  On the other hand, each term in
\eqref{eq:Ak-determinant} has degree at most
\begin{equation*}
 \sum_{i=1}^r(m_i-1)=k-r.
\end{equation*}
If $r^2>k$, then $k-r<r(r-1)$, so the divisibility forces
\begin{equation}
\label{eq:rank-cutoff}
 A_k(\mathbf a,\mathbf b)=0.
\end{equation}
Thus only ranks $r\leq R:=\lfloor\sqrt{k}\rfloor$ contribute. Because $\mathcal K$ is a non-empty compact subset of $(0,\infty)$, it has
a positive minimum. As $R$ depends only on $k$, we may henceforth assume
that $N$ is sufficiently large that
$n=\lfloor cN\rfloor\geq R+1$ uniformly for $c\in\mathcal K$. Therefore, $n-r\geq1$ for every $1\leq r\leq R$.

Replacing each $Q_{m_i}$ in \eqref{eq:Ak-determinant} by its leading part
\eqref{eq:Qm-leading} gives
\begin{equation}
\label{eq:A-leading}
 A_k(\mathbf a,\mathbf b)
 =
 (-1)^{b_1+\cdots+b_r}N^{k-r}
 \Phi_{k,r}(\mathbf a/N,\mathbf b/N)
 +O_{k,\mathcal K}(N^{k-r-1}),
\end{equation}
where
\begin{equation*}
 \Phi_{k,r}(\mathbf x,\mathbf y)
 =
 k!\sum_{\substack{m_1+\cdots+m_r=k\\m_i\geq1}}
 \det_{1\leq i,j\leq r}
 \bigl[\Theta_{m_i}(x_i,y_j)\bigr].
\end{equation*}
By squaring \eqref{eq:A-leading}, we have
\begin{equation}
\label{eq:A-square-leading}
 A_k(\mathbf a,\mathbf b)^2
 =
 N^{2k-2r}\Phi_{k,r}(\mathbf a/N,\mathbf b/N)^2
 +O_{k,\mathcal K}(N^{2k-2r-1}).
\end{equation}
Recall from \eqref{eq:Ik-lattice-count} that
\begin{equation*}
 I_k(n;N)
 =
 \sum_{r\geq1}
 \sum_{(\mathbf a,\mathbf b)\in\mathcal F_r(n,N)}
 A_k(\mathbf a,\mathbf b)^2.
\end{equation*}
Thus, to sum the pointwise error in \eqref{eq:A-square-leading}, we need an
upper bound for the number of points in $\mathcal F_r(n,N)$ for each fixed
$r$. Dropping both the ordering conditions and the
upper bound $b_1<N$ leaves non-negative integers
$a_1,\ldots,a_r,b_1,\ldots,b_r$ satisfying only $ \sum_{i=1}^r(a_i+b_i)=n-r$. The number of such $2r$-tuples is
\begin{equation*}
 \binom{(n-r)+2r-1}{2r-1}
 =
 \binom{n+r-1}{2r-1},
\end{equation*}
since this counts the ways of distributing $n-r$ identical units among
$2r$ non-negative coordinates. Hence
\begin{equation*}
 \#\mathcal F_r(n,N)
 \leq
 \binom{n+r-1}{2r-1}
 =
 O_{r,\mathcal K}(N^{2r-1}),
\end{equation*}
where we used $n=\lfloor cN\rfloor$ with $c\in\mathcal K$.

For fixed $r$, the error in \eqref{eq:A-square-leading} is therefore summed
over at most $O_{r,\mathcal K}(N^{2r-1})$ points, giving a total contribution
\begin{equation*}
 O_{k,\mathcal K}(N^{2k-2r-1})
 \,O_{r,\mathcal K}(N^{2r-1})
 =
 O_{k,r,\mathcal K}(N^{2k-2}).
\end{equation*}
The rank cutoff restricts the outer sum to
$r\leq R$, so only finitely many values of $r$
occur. Substituting \eqref{eq:A-square-leading} into
\eqref{eq:Ik-lattice-count} therefore yields
\begin{equation}
\label{eq:Ik-after-leading-weight}
 I_k(n;N)
 =
 \sum_{r=1}^{R}N^{2k-2r}
 \sum_{(\mathbf a,\mathbf b)\in\mathcal F_r(n,N)}
 \Phi_{k,r}(\mathbf a/N,\mathbf b/N)^2
 +O_{k,\mathcal K}(N^{2k-2}).
\end{equation}

\subsubsection{Replacing the lattice sum by a volume}
We now aim to represent the count in \eqref{eq:Ik-after-leading-weight} by an
integral at leading order. For fixed $r$, the Frobenius coordinates satisfy
\begin{equation*}
 \sum_{i=1}^r(a_i+b_i)=n-r.
\end{equation*}
After scaling $a_i=Nx_i$ and $b_i=Ny_i$, this becomes
\begin{equation}
\label{eq:scaled-size}
 \sum_{i=1}^r(x_i+y_i)=d_{r,N}(c),
 \qquad
 d_{r,N}(c):=\frac{n-r}{N}.
\end{equation}
Thus the $2r$ scaled coordinates satisfy one linear constraint, leaving
$2r-1$ independent variables. By our restriction to sufficiently large $N$, the parameter $d_{r,N}(c)$ is positive
for every $1\leq r\leq R$. Since $n=\lfloor cN\rfloor$,
\begin{equation*}
 d_{r,N}(c)
 =\frac{n-r}{N}
 =c+O_r(N^{-1}),
\end{equation*}
so $d_{r,N}(c)\to c$ as $N\to\infty$.

It is convenient to introduce a general
parameter $d>0$ to describe both the finite-$N$ region, where $d=d_{r,N}(c)$, and its
limiting form, where $d=c$. To apply
Lemma~\ref{lem:weighted-lattice-volume} in these independent coordinates, we
eliminate $y_r$. For $d>0$, let
$V_{r,d}\subset\mathbb R^{2r-1}$ have coordinates
$(x_1,\ldots,x_r,y_1,\ldots,y_{r-1})$, set
\begin{equation}
\label{eq:yr-eliminate}
 y_r
 =
 d-\sum_{i=1}^r x_i-\sum_{i=1}^{r-1}y_i,
\end{equation}
and impose
\begin{equation}
\label{eq:Vrd}
 x_1\geq\cdots\geq x_r\geq0,
 \qquad
 1\geq y_1\geq\cdots\geq y_r\geq0.
\end{equation}
When $r=1$, the above sets of inequalities become
$1\geq d-x_1\geq0$ and $x_1\geq0$. These are linear inequalities, so $V_{r,d}$ is a convex
polytope. It is bounded because all coordinates are non-negative and their
total sum is $d$.

For $d=d_{r,N}(c)$, the same elimination reduces the
original lattice count to $2r-1$ integer coordinates; the size
condition determines the omitted coordinate uniquely as
\begin{equation*}
 b_r
 =
 n-r-\sum_{i=1}^r a_i-\sum_{i=1}^{r-1}b_i.
\end{equation*}
The right-hand side is an integer, so $y_r$ also
lies on the scaled lattice $N^{-1}\mathbb Z$. Thus every choice of the
remaining $2r-1$ scaled lattice coordinates satisfying the inequalities of
$V_{r,d_{r,N}(c)}$ determines a valid value of the omitted Frobenius
coordinate $b_r=Ny_r$. Equivalently, multiplying the independent scaled coordinates by $N$ sends
such points to integer points of $NV_{r,d_{r,N}(c)}\subset\mathbb R^{2r-1}$.
After removing the lattice points on the boundary hyperplanes corresponding to
the strict inequalities in \eqref{eq:Frobenius-coordinate-set}, the points
of $\mathcal F_r(n,N)$ are in one-to-one correspondence with the remaining
lattice points of $NV_{r,d_{r,N}(c)}\cap\mathbb Z^{2r-1}$.

Next, we define the weight
\begin{equation}
\label{eq:Prd-def}
 \mathcal P_{k,r,d}(\mathbf x,\tilde{\mathbf y})
 :=
 \Phi_{k,r}\!\left(
 \mathbf x,
 \biggl( y_1,\ldots,y_{r-1},
 d-\sum_{i=1}^r x_i-\sum_{i=1}^{r-1}y_i
 \biggr) \right)^2,
\end{equation}
where $\tilde{\mathbf y}=(y_1,\ldots,y_{r-1})$, and set
\begin{equation}
\label{eq:Jkr-def}
 J_{k,r}(d)
 :=
 \int_{V_{r,d}}
 \mathcal P_{k,r,d}(\mathbf x,\tilde{\mathbf y})
 \,d\mathbf x\,d\tilde{\mathbf y}.
\end{equation}

We verify the hypotheses of Lemma~\ref{lem:weighted-lattice-volume} with
$D=2r-1$ and with $d$ as the family parameter. As $\mathcal K$ is a compact subset of $(0,\infty)$, we may choose a
compact interval $\mathcal I\subset(0,\infty)$ whose interior contains $\mathcal K$. Let $d^+:=\max\mathcal I$. Since $1\leq r\leq R$ and
\begin{equation}
 |d_{r,N}(c)-c|
 \leq \frac{r+1}{N}
 \leq \frac{R+1}{N},
\end{equation}
we have $d_{r,N}(c)\in\mathcal I$ for all sufficiently large $N$,
uniformly for $c\in\mathcal K$ and $1\leq r\leq R$. The linear inequalities in \eqref{eq:Vrd} make $V_{r,d}$ convex. For
each $d\in \mathcal I$, choose coordinates satisfying
\begin{align*}
 1>y_1>\cdots>y_r>0,
 &\qquad
 \sum_{i=1}^r y_i<d,\\
 x_1>\cdots>x_r>0,
 &\qquad
 \sum_{i=1}^r x_i=d-\sum_{i=1}^r y_i.
\end{align*}
Every inequality in \eqref{eq:Vrd} is then strict. This produces an
interior point of $V_{r,d}$, proving that the polytope is
$(2r-1)$-dimensional. On $V_{r,d}$, one has
$0\leq y_i\leq1$ and $0\leq x_i\leq d\leq d^+$, so every polytope lies
in the common compact box
$\mathcal C_{\mathcal I}:=[0,d^+]^r\times[0,1]^{r-1}$.
As the inequalities form a fixed finite list, the number of facets is
bounded by the length of this list, and this bound is independent of $d$.

Choose a fixed bounded open neighbourhood $U$ of $\mathcal C_{\mathcal I}$. After
inserting the expression for $y_r$ from \eqref{eq:yr-eliminate},
$\mathcal P_{k,r,d}$ is polynomial jointly in the displayed $2r-1$
variables and in $d$. The polynomial and its first derivatives are
continuous on the compact set $\overline U\times \mathcal I$, so they admit a
common finite bound. The strict inequalities in \eqref{eq:Frobenius-coordinate-set} omit
only boundary lattice points satisfying $x_i=x_{i+1}$ or
$y_i=y_{i+1}$ for $1\leq i<r$, or $y_1=1$. The hyperplane-counting estimate
in the final part of the proof of
Lemma~\ref{lem:weighted-lattice-volume} shows that omitting these points
changes the sum by at most $O_{k,r,\mathcal K}(N^{2r-2})$.
Applying the lemma with $d=d_{r,N}(c)$ gives
\begin{equation}
\label{eq:weighted-volume-r}
 \sum_{(\mathbf a,\mathbf b)\in\mathcal F_r(n,N)}
 \Phi_{k,r}(\mathbf a/N,\mathbf b/N)^2
 =
 N^{2r-1}J_{k,r}(d_{r,N}(c))
 +O_{k,r,\mathcal K}(N^{2r-2}).
\end{equation} 
The parameter in \eqref{eq:weighted-volume-r} is $d_{r,N}(c)$ rather than
$c$, so we next replace it by its limiting value. Since
$n=\lfloor cN\rfloor$,
\begin{equation}
\label{eq:dN-to-c}
 |d_{r,N}(c)-c|
 =
 \left|\frac{\lfloor cN\rfloor-r}{N}-c\right|
 \leq\frac{r+1}{N}.
\end{equation}
The main term in \eqref{eq:weighted-volume-r} contains
$J_{k,r}(d_{r,N}(c))$, while the limiting coefficient involves
$J_{k,r}(c)$. By \eqref{eq:dN-to-c}, the two parameters differ by
$O_r(N^{-1})$. It is enough to prove that $J_{k,r}(d)$ is uniformly
Lipschitz in $d$: after multiplication by $N^{2r-1}$, an $O(N^{-1})$
change in $J_{k,r}$ contributes $O(N^{2r-2})$, matching the error in
\eqref{eq:weighted-volume-r}.

For the Lipschitz estimate, we continue to use the compact interval
$\mathcal I$ chosen above. In the independent
coordinates $\mathbf z=(x_1,\ldots,x_r,y_1,\ldots,y_{r-1})$, put
$S_r(\mathbf z):=\sum_{i=1}^r x_i+\sum_{i=1}^{r-1}y_i$, so that the
eliminated coordinate is $y_r=d-S_r(\mathbf z)$. For $r=1$, we have
$\mathbf z=x_1$. We refer separately to rank one only where the symbol
$y_{r-1}$ occurs; the estimates are otherwise identical.

Every $V_{r,d}$ with $d\in\mathcal I$ lies
in the fixed box
$\mathcal B_{r,\mathcal I}:=[0,d^+]^r\times[0,1]^{r-1}$, with the second
factor omitted for $r=1$. This holds because all the original coordinates are
non-negative, each $y_i$ is at most $1$, and
$\sum_{i=1}^r(x_i+y_i)=d\leq d^+$. We first estimate the change in the region.  For $r\geq2$, substituting
$y_r=d-S_r(\mathbf z)$ into $y_{r-1}\geq y_r\geq0$ gives
$S_r(\mathbf z)\leq d$ and $S_r(\mathbf z)+y_{r-1}\geq d$. All other
inequalities defining $V_{r,d}$ are independent of $d$. For $r=1$, the
corresponding conditions are $x_1\leq d$ and $x_1+1\geq d$. For $d,d'\in\mathcal I$, write
$\underline d:=\min\{d,d'\}$ and $\overline d:=\max\{d,d'\}$. A point belonging to exactly one of $V_{r,d}$ and $V_{r,d'}$ must lie
in one of the regions swept out by the two parameter-dependent
boundaries as the parameter varies between $d$ and $d'$. For
$r\geq2$, this gives
\begin{align}
\label{eq:Vrd-symmetric-difference-regions}
 V_{r,d}\mathbin{\triangle}V_{r,d'}
 \subset{}&
 \bigl\{\mathbf z\in\mathcal B_{r,\mathcal I}:
 \underline d\leq S_r(\mathbf z)\leq\overline d\bigr\}
 \nonumber\\
 &\qquad\qquad\mathbin{\cup}
 \bigl\{\mathbf z\in\mathcal B_{r,\mathcal I}:
 \underline d\leq S_r(\mathbf z)+y_{r-1}
 \leq\overline d\bigr\}.
\end{align}
For $r=1$, replace the two displayed functions by $x_1$ and $x_1+1$.

Fixing every coordinate except $x_1$, either set on the right of
\eqref{eq:Vrd-symmetric-difference-regions} restricts $x_1$ to an interval
of length at most $\overline d-\underline d=|d-d'|$.  The projection of
$\mathcal B_{r,\mathcal I}$ onto the other coordinates has bounded volume,
so Fubini's theorem gives
\begin{equation}
\label{eq:Vrd-symmetric-difference-volume}
 \operatorname{vol}_{2r-1}
 \bigl(V_{r,d}\mathbin{\triangle}V_{r,d'}\bigr)
 \ll_{r,\mathcal K}|d-d'|.
\end{equation}
For $r=1$, the sets in question are intervals of length at most $|d-d'|$, and the same estimate applies. We next estimate the change in the weight.  Formula \eqref{eq:Prd-def}
expresses $\mathcal P_{k,r,d}(\mathbf z)$ as a polynomial in
$(\mathbf z,d)$.  On the compact set
$\mathcal B_{r,\mathcal I}\times\mathcal I$, there are constants
$M_0,M_1\ll_{k,r,\mathcal K}1$ for which
\begin{equation*}
 \bigl|\mathcal P_{k,r,d}(\mathbf z)\bigr|\leq M_0,
 \qquad
 \left|\partial_d\mathcal P_{k,r,d}(\mathbf z)\right|\leq M_1.
\end{equation*}
The mean value theorem gives
$|\mathcal P_{k,r,d}(\mathbf z)-\mathcal P_{k,r,d'}(\mathbf z)|
\leq M_1|d-d'|$.
Splitting the two integrals over their common part and their disjoint
parts, we obtain
\begin{align*}
 |J_{k,r}(d)-J_{k,r}(d')|
 &\leq
 \int_{V_{r,d}\cap V_{r,d'}}
 \bigl|\mathcal P_{k,r,d}-\mathcal P_{k,r,d'}\bigr|\,d\mathbf z
 \\
 &\quad+
 \int_{V_{r,d}\setminus V_{r,d'}}
 \bigl|\mathcal P_{k,r,d}\bigr|\,d\mathbf z
 +
 \int_{V_{r,d'}\setminus V_{r,d}}
 \bigl|\mathcal P_{k,r,d'}\bigr|\,d\mathbf z
 \\
 &\leq
 M_1\operatorname{vol}_{2r-1}(\mathcal B_{r,\mathcal I})|d-d'|
 +M_0\operatorname{vol}_{2r-1}
 \bigl(V_{r,d}\mathbin{\triangle}V_{r,d'}\bigr)
 \\
 &\ll_{k,r,\mathcal K}|d-d'|.
\end{align*}
We have proved
\begin{equation}
\label{eq:Jkr-lipschitz}
 |J_{k,r}(d)-J_{k,r}(d')|
 \ll_{k,r,\mathcal K}|d-d'|,
 \qquad d,d'\in\mathcal I.
\end{equation}
Taking $d=d_{r,N}(c)$ and $d'=c$, and applying
\eqref{eq:dN-to-c}, gives
\begin{equation}
\label{eq:slice-lipschitz}
 J_{k,r}(d_{r,N}(c))
 =J_{k,r}(c)+O_{k,r,\mathcal K}(N^{-1}),
\end{equation}
uniformly for $c\in\mathcal K$.

Substituting \eqref{eq:slice-lipschitz} into
\eqref{eq:weighted-volume-r}, we obtain
\begin{equation}
\label{eq:weighted-volume-r-at-c}
 \sum_{(\mathbf a,\mathbf b)\in\mathcal F_r(n,N)}
 \Phi_{k,r}(\mathbf a/N,\mathbf b/N)^2
 =N^{2r-1}J_{k,r}(c)
 +O_{k,r,\mathcal K}(N^{2r-2}).
\end{equation}
The error from \eqref{eq:slice-lipschitz} has the same order as the
lattice-to-volume error.  Inserting
\eqref{eq:weighted-volume-r-at-c} into
\eqref{eq:Ik-after-leading-weight} gives
\begin{align*}
 I_k(n;N)
 ={}&
 \sum_{r=1}^{R}N^{2k-2r}
 \left(
 N^{2r-1}J_{k,r}(c)
 +O_{k,r,\mathcal K}(N^{2r-2})
 \right)
 +O_{k,\mathcal K}(N^{2k-2}).
\end{align*}
For each $r$, the main term has order $N^{2k-1}$ and the error has order
$N^{2k-2}$.  The number $R=\lfloor\sqrt{k}\rfloor$ depends only on $k$,
so the sum over $r$ preserves this error order.  We arrive at
\begin{equation}
\label{eq:volume-before-delta}
 I_k(n;N)
 =N^{2k-1}\sum_{r=1}^{R}J_{k,r}(c)
 +O_{k,\mathcal K}(N^{2k-2}).
\end{equation}
\subsubsection{The symmetric integral}

We now rewrite $J_{k,r}(c)$ in a symmetric form. By
\eqref{eq:Jkr-def}, $y_r$ is determined by
\eqref{eq:yr-eliminate}; equivalently, this constraint may be imposed using
the Dirac delta distribution. Integrating first with respect to $y_r$ gives
\begin{equation}
\label{eq:Jr-ordered-delta}
 J_{k,r}(c)
 =
 \int_{\substack{x_1>\cdots>x_r\geq0\\
                  1>y_1>\cdots>y_r\geq0}}
 \Phi_{k,r}(\mathbf x,\mathbf y)^2
 \delta\!\left(\sum_{i=1}^r(x_i+y_i)-c\right)
 \,d\mathbf x\,d\mathbf y.
\end{equation}
Indeed, the $y_r$-integration sets
\begin{equation*}
 y_r
 =
 c-\sum_{i=1}^r x_i-\sum_{i=1}^{r-1}y_i,
\end{equation*}
which is exactly \eqref{eq:yr-eliminate} with $d=c$. No additional factor
appears because the derivative of the delta argument with respect to $y_r$
is $1$. Replacing the strict inequalities by weak ones does not change the
integral, since the corresponding boundary hyperplanes have measure zero
within the constrained region.

It remains to remove the ordering conditions. The determinant defining
$\Phi_{k,r}$ is alternating separately in the $\mathbf x$- and
$\mathbf y$-variables: permuting the $y_j$ permutes determinant columns,
while permuting the $x_i$ permutes rows after relabelling the summation
variables $(m_1,\ldots,m_r)$. Hence, for $\sigma,\tau\in S_r$,
\begin{equation}
\label{eq:Phi-alternating}
 \Phi_{k,r}(\mathbf x_\sigma,\mathbf y_\tau)
 =
 \operatorname{sgn}(\sigma)\operatorname{sgn}(\tau)
 \Phi_{k,r}(\mathbf x,\mathbf y).
\end{equation}
It follows that $\Phi_{k,r}^2$ is invariant under separate permutations of
the $x$- and $y$-coordinates, as is the delta factor. Up to boundary sets of
measure zero, the domain
$\mathbb R_{\geq0}^r\times[0,1]^r$ is the disjoint union of the
$(r!)^2$ regions obtained by fixing one ordering of the $x_i$ and one
ordering of the $y_i$. Coordinate permutations preserve both the measure and
the integrand, so each region contributes equally. Therefore
\begin{equation}
\label{eq:Jr-unordered-delta}
 J_{k,r}(c)
 =
 \frac1{(r!)^2}
 \int_{\mathbb R_{\geq0}^r\times[0,1]^r}
 \Phi_{k,r}(\mathbf x,\mathbf y)^2
 \delta\!\left(\sum_{i=1}^r(x_i+y_i)-c\right)
 \,d\mathbf x\,d\mathbf y.
\end{equation}
Combining \eqref{eq:Jr-unordered-delta} with
\eqref{eq:volume-before-delta} proves the asserted asymptotic and the
integral formula for $\delta_k(c)$, uniformly for $c\in\mathcal K$.
\end{proof}

\subsection{Signed lattice asymptotics}
\label{subsec:signed-lattice-asymptotics}

The weighted model is well-suited to asymptotic analysis, but its lattice
points carry non-trivial character weights. We now obtain a second explicit count for $I_k(n;N)$ that is signed, then
organise its configurations into blocks and cycles before taking the large-$N$ limit. The exact lattice-count model arises
from the determinantal formula for products of traces stated below; throughout, we use the notation $[N]_0:=\{0,\ldots,N-1\}$. 
\begin{theorem}[\cite{kn:irfan26}, Theorem 12]
\label{thm:AoT}
Let $N,k\geq1$ be integers, and let
$\mathbf p=(p_1,\ldots,p_k)$ and
$\mathbf q=(q_1,\ldots,q_k)$ belong to $\mathbb Z_{\geq1}^k$. Then
\begin{align}
\label{eq:CN-definition-theorem}
&\mathbb E_{U(N)}
\left[
 \prod_{i=1}^{k}\operatorname{Tr}(U^{p_i})
 \prod_{j=1}^{k}\overline{\operatorname{Tr}(U^{q_j})}
\right]
=
\sum_{\mathbf r,\mathbf s\in[N]_0^k}
\det_{a,b\in[N]_0}
\left[
 \mathds{1}_{\{b=a-h_a(\mathbf r,\mathbf s)\}}
\right],
\end{align}
where $\mathds{1}_{\{E\}}$ is the
indicator of the condition $E$, equal to $1$ when $E$ holds and $0$
otherwise. For $a\in[N]_0$, define
\begin{equation}
\label{eq:zero-condition}
 h_a(\mathbf r,\mathbf s)
 :=
 \sum_{i=1}^{k}p_i\mathds{1}_{\{r_i=a\}}
 -
 \sum_{j=1}^{k}q_j\mathds{1}_{\{s_j=a\}}.
\end{equation}
\end{theorem}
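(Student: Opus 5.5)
The plan is to work at the level of eigenvalues. I combine the Weyl integration formula with a row-shift identity for alternants, and then apply Andreief's identity. Let $z_1,\ldots,z_N$ be the eigenvalues of $U$ and write the Vandermonde as an alternant
\begin{equation*}
\Delta(\mathbf z)=\det_{a\in[N]_0,\,j}\bigl[z_j^{\,N-1-a}\bigr].
\end{equation*}
The Weyl integration formula then gives
\begin{equation*}
\mathbb E_{U(N)}[F]=\frac{1}{N!}\int_{[0,2\pi)^N}F(\mathbf z)\,|\Delta(\mathbf z)|^2\,\frac{d\boldsymbol\theta}{(2\pi)^N}
\end{equation*}
for any class function $F$. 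Here $\operatorname{Tr}(U^{p})=p_{p}(\mathbf z)=\sum_j z_j^{p}$ by \eqref{eq:power-sum-trace}.

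The key algebraic step is the identity, valid for any exponents $e_0,\ldots,e_{N-1}$,
\begin{equation*}
p_m(\mathbf z)\det_{a,j}\bigl[z_j^{e_a}\bigr]=\sum_{a'\in[N]_0}\det_{a,j}\bigl[z_j^{e_a+m\mathds 1_{\{a=a'\}}}\bigr].
\end{equation*}
To prove it, expand the determinant as $\sum_{\sigma}\operatorname{sgn}(\sigma)\prod_a z_{\sigma(a)}^{e_a}$. Then use $p_m(\mathbf z)=\sum_{a'}z_{\sigma(a')}^m$ inside each term, since $\sigma$ is a bijection. Iterating this $k$ times, with the $i$th trace choosing a row $r_i\in[N]_0$, shows that the exponent of row $a$ is raised by the total $\sum_i p_i\mathds 1_{\{r_i=a\}}$ whenever several $r_i$ coincide. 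This gives
\begin{equation*}
\prod_{i=1}^k\operatorname{Tr}(U^{p_i})\,\Delta(\mathbf z)=\sum_{\mathbf r\in[N]_0^k}\det_{a,j}\Bigl[z_j^{\,N-1-a+\sum_i p_i\mathds 1_{\{r_i=a\}}}\Bigr].
\end{equation*}
Complex conjugation of the analogous identity gives the corresponding expansion of $\prod_j\overline{\operatorname{Tr}(U^{q_j})}\,\overline{\Delta(\mathbf z)}$ over $\mathbf s\in[N]_0^k$. On the unit circle, conjugation sends $z_j^{e}$ to $z_j^{-e}$.

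Multiplying the two expansions and integrating term by term, for each pair $(\mathbf r,\mathbf s)$ I apply Andreief's identity:
\begin{equation*}
\frac1{N!}\int\det_{a,j}[f_a(z_j)]\det_{b,j}[\overline{g_b(z_j)}]\,\frac{d\boldsymbol\theta}{(2\pi)^N}=\det_{a,b}\Bigl[\int_0^{2\pi} f_a\overline{g_b}\,\frac{d\theta}{2\pi}\Bigr].
\end{equation*}
This is itself a direct consequence of expanding both determinants and using the independence of the $z_j$. Each entry is the integral of a monomial $z^{e}$ over the circle, so it equals the indicator that
\begin{equation*}
N-1-a+\sum_i p_i\mathds 1_{\{r_i=a\}}=N-1-b+\sum_j q_j\mathds 1_{\{s_j=b\}}.
\end{equation*}

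The remaining step is to put this condition into the stated form, and this is where I expect the only real difficulty. The indicator is in terms of $h^+_a-h^-_b$, whereas \eqref{eq:zero-condition} defines $h_a$ using a single index $a$. I would handle this by keeping the two shifts attached to their own index sets throughout the computation. The point to verify carefully is the convention making $h_a(\mathbf r,\mathbf s)$ depend on $a$ alone. If necessary, this can be arranged by reindexing the conjugate alternant, that is, by replacing its shift on row $b$ with a shift on the row it is matched to, using the bijectivity in the row-shift identity. Exactly which convention the cited Theorem~12 uses is the part I would check against \cite{kn:irfan26}.

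The analytic steps are routine: Weyl integration, the multilinear row-shift identity, and Andreief's formula. They involve only finite sums of Laurent monomials, so all interchanges of sums and integrals are trivially justified. The main obstacle is therefore the index and sign bookkeeping: the orientation of the alternant exponents $N-1-a$ versus $a$, and the sign of the shift. These must come out so that the matrix entry reads $\mathds 1_{\{b=a-h_a\}}$ rather than $\mathds 1_{\{b=a+h_a\}}$, and I would fix them by checking the case $N=1$ and $k=1$ directly.
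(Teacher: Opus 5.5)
Your steps through Andr\'eief's identity are correct, but they prove a different identity from the stated one:
\[
\mathbb E_{U(N)}\Bigl[\prod_{i}\operatorname{Tr}(U^{p_i})\prod_{j}\overline{\operatorname{Tr}(U^{q_j})}\Bigr]
=\sum_{\mathbf r,\mathbf s\in[N]_0^k}\det_{a,b\in[N]_0}\Bigl[\mathds{1}_{\{a-\sum_i p_i\mathds{1}_{\{r_i=a\}}=b-\sum_j q_j\mathds{1}_{\{s_j=b\}}\}}\Bigr].
\]
Here the $q$-shifts sit on columns, whereas in the theorem both shifts sit on rows. The step you leave open cannot be closed by reindexing, because the summands themselves differ, not just their labels. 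Take $N=2$, $k=1$, $p_1=q_1=1$. For $(r,s)=(0,0),(1,1),(0,1),(1,0)$ your summands are $1,0,0,0$. The theorem's summands are $1,1,0,-1$, the last coming from the transposition $\phi(0)=1,\ \phi(1)=0$. Both families sum to $\mathbb E|\operatorname{Tr}U|^2=1$, but their value multisets differ, so no bijection of the indices $(\mathbf r,\mathbf s)$ turns one into the other. Your proposed check at $N=1$, $k=1$ cannot detect this, since with a single row the two formulas coincide. The distinction also matters later: Proposition~\ref{prop:signed-lattice-count} uses that each summand is $\det[\mathds{1}_{\{b=\phi_x(a)\}}]$ for a single row map $\phi_x(a)=a-h_a$. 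That structure is what makes the summand equal to $\operatorname{sgn}(\phi_x)$ or $0$.

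The missing idea is to let all $2k$ power sums act on the same alternant. On the unit circle, $\overline{\operatorname{Tr}(U^{q})}=\sum_\ell z_\ell^{-q}$, and your row-shift identity holds verbatim for negative exponents, since it is an identity of Laurent polynomials. Apply it $2k$ times to $\Delta(\mathbf z)=\det[z_j^{N-1-a}]$: the $i$th trace chooses row $r_i$ and the $j$th conjugate trace chooses row $s_j$. This gives
\[
\prod_{i=1}^k\operatorname{Tr}(U^{p_i})\prod_{j=1}^k\overline{\operatorname{Tr}(U^{q_j})}\,\Delta(\mathbf z)
=\sum_{\mathbf r,\mathbf s\in[N]_0^k}\det_{a,j}\bigl[z_j^{\,N-1-a+h_a(\mathbf r,\mathbf s)}\bigr],
\]
with $h_a$ exactly as in \eqref{eq:zero-condition}. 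Leave $\overline{\Delta(\mathbf z)}=\det_{b,j}\bigl[\overline{z_j^{\,N-1-b}}\bigr]$ unshifted. Andr\'eief then produces the entries $\int_0^{2\pi}z^{N-1-a+h_a}\,\overline{z^{\,N-1-b}}\,\frac{d\theta}{2\pi}=\mathds{1}_{\{b=a-h_a\}}$, which is precisely the stated matrix. The orientation $N-1-a$ of the exponents already gives $b=a-h_a$ rather than $b=a+h_a$, so no further sign bookkeeping is needed.
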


For fixed $\mathbf r$ and $\mathbf s$, the row indexed by $a$ in
\eqref{eq:CN-definition-theorem} has a single $1$ when
$a-h_a(\mathbf r,\mathbf s)\in[N]_0$, and is otherwise zero. The determinant is therefore non-zero exactly when every row
contains a $1$ and these $1$'s occupy distinct columns; in
that case the matrix is a permutation matrix and its determinant is the
sign of the corresponding permutation. This observation gives the next
exact representation.

\begin{proposition}
\label{prop:signed-lattice-count}
For $k,n,N\in\mathbb Z_{\geq 1}$, let $A_{k,n,N}^+$ and $A_{k,n,N}^-$ be the
collections of lattice points
\begin{equation}
\label{eq:signed-count-matrix}
x=
\begin{pmatrix}
 x_1^{(1)}&x_2^{(1)}&\cdots&x_k^{(1)}\\
 x_1^{(2)}&x_2^{(2)}&\cdots&x_k^{(2)}\\
 x_1^{(3)}&x_2^{(3)}&\cdots&x_k^{(3)}\\
 x_1^{(4)}&x_2^{(4)}&\cdots&x_k^{(4)}
\end{pmatrix}
\in\mathbb Z^{4k}
\end{equation}
satisfying:
\begin{enumerate}[label=\textup{(\roman*)},leftmargin=*,itemsep=0.7em]
\item
$0\leq x_i^{(1)},x_i^{(2)}\leq N-1$ and
$x_i^{(3)},x_i^{(4)}\geq1$ for $1\leq i\leq k$;

\item
$\sum_{i=1}^k x_i^{(3)}=n$ and
$\sum_{i=1}^k x_i^{(4)}=n$;

\item the map $\phi_x:[N]_0\to\mathbb Z$ defined by
\begin{equation}
\label{eq:phi-x-definition}
\phi_x(m)
=
m-
\sum_{i=1}^k x_i^{(3)}\mathds{1}_{\{x_i^{(1)}=m\}}
+
\sum_{i=1}^k x_i^{(4)}\mathds{1}_{\{x_i^{(2)}=m\}}
\end{equation}
is an even permutation of $[N]_0$ for
$A_{k,n,N}^+$, and an odd permutation of $[N]_0$ for
$A_{k,n,N}^-$.
\end{enumerate}
Then
\begin{equation}
\label{eq:exact-signed-lattice-count}
I_k(n;N)=\#A_{k,n,N}^+-\#A_{k,n,N}^-.
\end{equation}
\end{proposition}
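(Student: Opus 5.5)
We prove the proposition by expanding $|F_{k,n}(U)|^2$ into products of traces and then applying Theorem~\ref{thm:AoT} to each term. By definition,
\begin{equation*}
I_k(n;N)=\sum_{\mathbf p,\mathbf q}\mathbb E_{U(N)}\Bigl[\prod_{i=1}^k\operatorname{Tr}(U^{p_i})\prod_{j=1}^k\overline{\operatorname{Tr}(U^{q_j})}\Bigr],
\end{equation*}
where $\mathbf p$ and $\mathbf q$ range over ordered compositions of $n$ into $k$ positive parts. These are exactly the vectors $x^{(3)}=\mathbf p$ and $x^{(4)}=\mathbf q$ allowed by conditions (i) and (ii). Theorem~\ref{thm:AoT} rewrites each expectation as a sum over $\mathbf r,\mathbf s\in[N]_0^k$, which we take as $x^{(1)}=\mathbf r$ and $x^{(2)}=\mathbf s$; this accounts for the remaining bounds in (i). Thus $I_k(n;N)$ becomes a sum over all $x\in\mathbb Z^{4k}$ satisfying (i) and (ii) of the determinant $\det_{a,b\in[N]_0}[\mathds 1_{\{b=a-h_a(\mathbf r,\mathbf s)\}}]$.

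Next we evaluate this determinant for fixed $x$. Comparing \eqref{eq:zero-condition} with \eqref{eq:phi-x-definition} term by term gives $a-h_a(\mathbf r,\mathbf s)=\phi_x(a)$. So row $a$ of the matrix has a single entry $1$, in column $\phi_x(a)$, when $\phi_x(a)\in[N]_0$, and is identically zero otherwise.
\begin{itemize}
\item If some $\phi_x(a)\notin[N]_0$, that row vanishes, so the determinant is $0$.
\item If $\phi_x$ maps $[N]_0$ into $[N]_0$ but is not injective, two rows coincide, so again the determinant is $0$.
\item Otherwise $\phi_x$ is an injection of the finite set $[N]_0$ into itself, hence a permutation. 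The matrix is then the permutation matrix with entries $\mathds 1_{\{b=\phi_x(a)\}}$, and its determinant is $\operatorname{sgn}(\phi_x)$.
\end{itemize}

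Summing over $x$, the points where $\phi_x$ is an even permutation contribute $+1$ each, those where it is odd contribute $-1$, and all others contribute $0$. By condition (iii) this gives $I_k(n;N)=\#A^+_{k,n,N}-\#A^-_{k,n,N}$. Both sets are finite because conditions (i) and (ii) bound every coordinate.

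The only point needing care is the identification $\phi_x(a)=a-h_a(\mathbf r,\mathbf s)$. One has to match the sign conventions ($p_i$ enters with a minus sign in $\phi_x$, $q_j$ with a plus sign) and the convention that a permutation matrix $[\mathds 1_{\{b=\sigma(a)\}}]$ has determinant $\operatorname{sgn}\sigma$. Note that $\sigma$ and $\sigma^{-1}$ have the same sign, so the row-versus-column orientation of the matrix does not affect the result. Everything else is bookkeeping.
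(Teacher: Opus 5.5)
Your proposal is correct and follows essentially the same route as the paper. In both, you expand the absolute square over pairs of compositions, apply Theorem~\ref{thm:AoT}, and identify $a-h_a(\mathbf r,\mathbf s)$ with $\phi_x(a)$ via $x^{(1)}=\mathbf r$, $x^{(2)}=\mathbf s$, $x^{(3)}=\mathbf p$, $x^{(4)}=\mathbf q$, and then evaluate the $0$--$1$ determinant by the same three-case analysis (zero row, repeated row, permutation matrix with determinant $\operatorname{sgn}(\phi_x)$).
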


\begin{proof}
Expanding the absolute square in \eqref{eq:variance-rmt} gives
\begin{align}
\label{eq:expanded-I_k-pq}
I_k(n;N)
=
\sum_{\substack{p_1,\ldots,p_k\geq1\\p_1+\cdots+p_k=n}}
\sum_{\substack{q_1,\ldots,q_k\geq1\\q_1+\cdots+q_k=n}}
\int_{U(N)}
\prod_{i=1}^k\operatorname{Tr}(U^{p_i})
\prod_{i=1}^k\overline{\operatorname{Tr}(U^{q_i})}\,dU.
\end{align}
In particular, $\mathbf p=(p_1,\ldots,p_k)$ and $\mathbf q=(q_1,\ldots,q_k)$ range over the ordered
compositions of $n$ into $k$ positive parts.  For fixed $\mathbf p$ and
$\mathbf q$, Theorem~\ref{thm:AoT} evaluates the integral by introducing
$\mathbf r,\mathbf s\in[N]_0^k$ and gives
\begin{equation}
\label{expanded-out-Ik-exact-count}
I_k(n;N)
=
\sum_{\substack{p_1,\ldots,p_k\geq1\\p_1+\cdots+p_k=n}}
\sum_{\substack{q_1,\ldots,q_k\geq1\\q_1+\cdots+q_k=n}}
\sum_{\mathbf r,\mathbf s\in[N]_0^k}
\det_{a,b\in[N]_0}
\left[
\mathds{1}_{\{b=a-h_a(\mathbf r,\mathbf s)\}}
\right],
\end{equation}
where
\begin{equation*}
h_a(\mathbf r,\mathbf s)
=
\sum_{i=1}^k p_i\mathds{1}_{\{r_i=a\}}
-
\sum_{i=1}^k q_i\mathds{1}_{\{s_i=a\}}.
\end{equation*}
For each choice of $\mathbf p$, $\mathbf q$, $\mathbf r$ and $\mathbf s$ in
these sums, define $x\in\mathbb Z^{4k}$ by
\begin{equation}
\label{eq:indices-exact-proposition}
x_i^{(1)}=r_i,
\qquad
x_i^{(2)}=s_i,
\qquad
x_i^{(3)}=p_i,
\qquad
x_i^{(4)}=q_i.
\end{equation}
The bounds on $\mathbf r$ and $\mathbf s$, together with the composition
conditions on $\mathbf p$ and $\mathbf q$, are precisely conditions
\textup{(i)} and \textup{(ii)}.  Conversely, the four rows of any $x$
satisfying \textup{(i)} and \textup{(ii)} recover a unique choice of
$\mathbf p$, $\mathbf q$, $\mathbf r$ and $\mathbf s$.  This construction is
therefore a bijection.

For a lattice point obtained in this way, \eqref{eq:phi-x-definition} and the
formula for $h_a(\mathbf r,\mathbf s)$ give
\begin{equation}
\label{phi-exact-lattice-count}
\phi_x(a)=a-h_a(\mathbf r,\mathbf s).
\end{equation}
Write
\begin{equation}
\label{eq:M-exact-lattice-count}
M_x=
\left[
\mathds{1}_{\{b=\phi_x(a)\}}
\right]_{a,b\in[N]_0}.
\end{equation}
The summand indexed by $\mathbf p$, $\mathbf q$, $\mathbf r$ and $\mathbf s$
is $\det M_x$. If $\phi_x(a)\notin[N]_0$ for some $a$, then row $a$ of
$M_x$ is zero. If every value of $\phi_x$ lies in $[N]_0$ but $\phi_x$ is
not injective, two rows of $M_x$ are the same standard basis vector.  In both
cases, $\det M_x=0$. Thus $\det M_x$ can be non-zero only if $\phi_x$
takes values in $[N]_0$ and is injective. As $[N]_0$ has $N$ elements,
every injective map from $[N]_0$ to itself is bijective. Therefore, 
$\det M_x$ can be non-zero only when $\phi_x$ is a permutation of $[N]_0$.
When $\phi_x$ is a permutation, the Leibniz expansion is
\begin{equation}
\label{eq:detM-exact-lattice-count}
\det M_x
=
\sum_{\tau\in S_N}
\operatorname{sgn}(\tau)
\prod_{a\in[N]_0}\mathds{1}_{\{\tau(a)=\phi_x(a)\}}.
\end{equation}
The product is non-zero only for $\tau=\phi_x$, so
$\det M_x=\operatorname{sgn}(\phi_x)$.  Under the bijection above, the points
in $A_{k,n,N}^+$ contribute $+1$, the points in $A_{k,n,N}^-$ contribute
$-1$, and all other points contribute $0$. Summing these contributions gives
\eqref{eq:exact-signed-lattice-count}.
\end{proof}

\subsubsection{Blocks and cycles}

To organise the configurations in the exact signed count according to the
positions shared by their coordinates, we now make the block construction
explicit. In \eqref{expanded-out-Ik-exact-count}, the vectors $\mathbf p$
and $\mathbf q$ range over the ordered compositions of $n$, while $r_i$
specifies the position assigned to $p_i$ and $s_j$ specifies the position
assigned to $q_j$.
To distinguish the two compositions while grouping their parts by position,
introduce the signed index set
\begin{equation*}
 \Omega_k:=\{1^+,\ldots,k^+\}\cup\{1^-,\ldots,k^-\}.
\end{equation*}
Here $i^+$ labels the part $p_i$, and $j^-$ labels the part $q_j$. The
superscripts indicate the signs with which these parts enter
$h_a(\mathbf r,\mathbf s)$ in \eqref{eq:zero-condition}.

For $X\in A_{k,n,N}^+\cup A_{k,n,N}^-$, the identification in
\eqref{eq:indices-exact-proposition} allows us to write its four rows as
\begin{equation*}
 r_i:=X_i^{(1)},\qquad
 s_i:=X_i^{(2)},\qquad
 p_i:=X_i^{(3)},\qquad
 q_i:=X_i^{(4)}
 \qquad(1\leq i\leq k).
\end{equation*}
Thus the label $i^+$ is placed at $r_i\in[N]_0$, while $j^-$ is placed at
$s_j\in[N]_0$. An \emph{occupied position} is an element of $[N]_0$ equal
to at least one of the coordinates $r_i$ or $s_j$. The set of all occupied
positions is
\begin{equation}
\label{eq:occupied-position-set-lattice}
 T:=\{r_i:1\leq i\leq k\}\cup\{s_j:1\leq j\leq k\}
 \subseteq[N]_0.
\end{equation}
To group the signed labels placed at the same occupied position, define
\begin{equation}
\label{eq:occupied-position-blocks-lattice}
 B_a:=\{i^+:r_i=a\}\cup\{j^-:s_j=a\},
 \qquad
 x_{B_a}:=a
 \quad(a\in T),
 \qquad
 \pi:=\{B_a:a\in T\}.
\end{equation}
The set $B_a$ is the \emph{block} at $a$, and $x_{B_a}$ is its block
position. Every signed label is placed at exactly one position. The blocks
$B_a$ are therefore non-empty, pairwise disjoint, and have union $\Omega_k$.
Thus $\pi$ is a set partition of $\Omega_k$.

To express the combined effect of the parts placed at one position, we
associate a net displacement with each block. For $B\in\pi$, let
\begin{equation*}
 B^+:=\{i:i^+\in B\},
 \qquad
 B^-:=\{j:j^-\in B\}
\end{equation*}
be the sets of positive and negative indices in $B$. For arbitrary
$\mathbf u,\mathbf v\in\mathbb R^k$, we use the same formula as in
\eqref{eq:block-displacement}:
\begin{equation}
\label{eq:block-displacement-lattice}
 d_B(\mathbf u,\mathbf v)
 :=
 \sum_{i\in B^+}u_i-
 \sum_{j\in B^-}v_j.
\end{equation}

We next describe how the permutation associated with $X$ acts on the
occupied positions. For this point $X$, let $\phi_X$ denote the map
$\phi_x$ in
\eqref{eq:phi-x-definition} with $x=X$. For $m\in[N]_0\setminus T$,
\eqref{eq:zero-condition} gives
$h_m(\mathbf r,\mathbf s)=0$, and \eqref{phi-exact-lattice-count} gives
$\phi_X(m)=m$. Thus $\phi_X$ fixes $[N]_0\setminus T$. It is also a
bijection of $[N]_0$, so it maps $T$ onto itself. Its action on $T$ induces
a permutation $\sigma$ of the blocks of $\pi$, defined by
\begin{equation}
\label{eq:induced-block-permutation-lattice}
 \sigma(B_a):=B_{\phi_X(a)}
 \qquad(a\in T).
\end{equation}
At the position $x_B$, the definitions of $h_a$ and $d_B$ give
$h_{x_B}(\mathbf r,\mathbf s)=d_B(\mathbf p,\mathbf q)$. Substitution into
\eqref{phi-exact-lattice-count} gives the block equation
\begin{equation}
\label{eq:finite-block-equation-body}
 d_B(\mathbf p,\mathbf q)=x_B-x_{\sigma(B)}
 \qquad(B\in\pi).
\end{equation}
The permutation $\phi_X$ fixes every point outside $T$, so
$\operatorname{sgn}(\phi_X)=\operatorname{sgn}(\sigma)$.

To describe the possible block structures independently of $X$, let
$\Pi(\Omega_k)$ denote the set of set partitions of $\Omega_k$, and let
$\mathfrak S(\pi)$ denote the set of all permutations of the blocks of
$\pi$, viewed as a group under composition. For
$\sigma\in\mathfrak S(\pi)$, let $\operatorname{Cyc}(\sigma)$ denote the
set of cycles in its disjoint cycle decomposition. We orient each cycle
according to $\sigma$, so the block after $B$ is $\sigma(B)$. Define
\begin{equation}
\label{eq:block-permutation-set-lattice}
 \mathfrak B_k
 :=
 \left\{
 (\pi,\sigma):
 \begin{array}{l}
  \pi\in\Pi(\Omega_k),\quad \sigma\in\mathfrak S(\pi),\\
  \text{the union of the blocks in each cycle of $\sigma$ contains}\\
  \text{at least one $+$ index and at least one $-$ index}
 \end{array}
 \right\}.
\end{equation}
We call a pair $(\pi,\sigma)\in\mathfrak B_k$ admissible. Every pair
obtained from a point in the exact count is admissible. To verify this,
sum the block equations around one cycle. The right hand side is zero,
whereas a cycle containing indices of only one sign gives either a
non-empty sum of positive $p_i$ or the negative of a non-empty sum of
positive $q_j$ on the left. To prepare the exact count for asymptotic estimation, the next lemma
rewrites \eqref{eq:exact-signed-lattice-count} as a sum indexed by
admissible pairs.

\begin{lemma}
\label{lem:exact-admissible-block-decomposition}
Let $k,n,N\in\mathbb Z_{\geq1}$. For a partition $\pi$ of $\Omega_k$, write
\begin{equation*}
[N]_0^\pi
:=
\left\{
\mathbf x=(x_B)_{B\in\pi}:x_B\in[N]_0
\right\}.
\end{equation*}
Thus $\mathbf x\in[N]_0^\pi$ assigns a position $x_B$ to each block
$B\in\pi$. For $(\pi,\sigma)\in\mathfrak B_k$, let
$\mathcal C_N(\pi,\sigma;n)$ be the set of triples
\begin{equation*}
(\mathbf p,\mathbf q,\mathbf x)
\in
\mathbb Z_{>0}^{k}\times\mathbb Z_{>0}^{k}\times[N]_0^\pi
\end{equation*}
satisfying these three conditions:
\begin{enumerate}
[label=\textup{(\roman*)},leftmargin=*,itemsep=0.7em]
\item $\displaystyle\sum_{i=1}^k p_i=n$;
\item $x_B\neq x_C$ for all distinct $B,C\in\pi$;
\item $d_B(\mathbf p,\mathbf q)=x_B-x_{\sigma(B)}$ for every $B\in\pi$.
\end{enumerate}
Summing the block equations in condition \textup{(iii)} over $B\in\pi$
and using condition \textup{(i)} gives $\sum_{j=1}^kq_j=n$. Then
\begin{equation}
\label{eq:exact-admissible-block-decomposition}
I_k(n;N)
=
\sum_{(\pi,\sigma)\in\mathfrak B_k}
\operatorname{sgn}(\sigma)\,
\#\mathcal C_N(\pi,\sigma;n).
\end{equation}
\end{lemma}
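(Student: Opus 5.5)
The plan is to start from Proposition~\ref{prop:signed-lattice-count} and build a sign-preserving bijection between the nonzero contributions there and the disjoint union over $(\pi,\sigma)\in\mathfrak B_k$ of the sets $\mathcal C_N(\pi,\sigma;n)$. Write \eqref{eq:exact-signed-lattice-count} as a sum over all $x$ satisfying (i) and (ii) of $\operatorname{sgn}(\phi_x)\mathds 1_{\{\phi_x\text{ is a permutation}\}}$. With the identification \eqref{eq:indices-exact-proposition}, $x$ corresponds to $(\mathbf p,\mathbf q,\mathbf r,\mathbf s)$.

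\textbf{Forward map.} Suppose $\phi_X$ is a permutation. The construction preceding the lemma gives:
\begin{itemize}
\item a set partition $\pi$ from \eqref{eq:occupied-position-blocks-lattice};
\item a block permutation $\sigma$ from \eqref{eq:induced-block-permutation-lattice};
\item positions $x_B$.
\end{itemize}
We then check the three conditions. Condition (i) is (ii) of the proposition. Condition (ii) holds because distinct blocks are indexed by distinct occupied positions. Condition (iii) is the block equation \eqref{eq:finite-block-equation-body}. Admissibility of $(\pi,\sigma)$ was verified in the paragraph before the lemma, and $\operatorname{sgn}(\phi_X)=\operatorname{sgn}(\sigma)$ was also noted there. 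So each signed point maps to an element of $\mathcal C_N(\pi,\sigma;n)$ carrying weight $\operatorname{sgn}(\sigma)$.

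\textbf{Inverse map.} Start from $(\pi,\sigma)\in\mathfrak B_k$ and $(\mathbf p,\mathbf q,\mathbf x)\in\mathcal C_N(\pi,\sigma;n)$. Set $r_i:=x_B$ for the block $B\ni i^+$ and $s_j:=x_B$ for the block $B\ni j^-$.
\begin{itemize}
\item Conditions (i) and (ii) of the proposition hold, using the remark that summing (iii) gives $\sum q_j=n$.
\item Because the $x_B$ are distinct, the occupied set is $T=\{x_B\}$, and the block at $x_B$ is exactly $B$. The partition $\pi$ is therefore recovered.
\item At $a=x_B$ we have $h_a=d_B(\mathbf p,\mathbf q)$, so (iii) gives $\phi(x_B)=x_{\sigma(B)}$. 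Off $T$, $\phi$ is the identity.
\end{itemize}
Hence $\phi$ is a permutation of $[N]_0$ whose induced block permutation is $\sigma$.

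\textbf{Bijection.} The two maps are mutually inverse. Going from a point to a triple and back recovers $\mathbf r,\mathbf s$. Going from a triple to a point and back recovers $\pi$, $\mathbf x$ and $\sigma$, the last because $\sigma$ is determined by $\phi$ on $T$. Distinct pairs $(\pi,\sigma)$ give disjoint images, since $\pi$ and $\sigma$ are read off from the point.

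Summing signs over this bijection yields \eqref{eq:exact-admissible-block-decomposition}. The only delicate step is confirming that the induced $\sigma$ is well-defined and that its sign equals $\operatorname{sgn}(\phi)$. This holds because $\phi$ restricted to $T$ is conjugate to $\sigma$ under $a\mapsto B_a$, and $\phi$ fixes everything else.
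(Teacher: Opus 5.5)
Your proposal is correct and follows essentially the same route as the paper. Both arguments build a sign-preserving bijection between $A_{k,n,N}^{+}\sqcup A_{k,n,N}^{-}$ and $\bigsqcup_{(\pi,\sigma)\in\mathfrak B_k}\mathcal C_N(\pi,\sigma;n)$: the forward map comes from the block construction preceding the lemma, and the inverse assigns $r_i,s_j$ from the block positions, with condition (ii) ensuring the blocks are recovered and that $\phi_X$ permutes $T$ according to $\sigma$ while fixing its complement, which gives $\operatorname{sgn}(\phi_X)=\operatorname{sgn}(\sigma)$ exactly as in your conjugation remark.
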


\begin{proof}
We prove \eqref{eq:exact-admissible-block-decomposition} by
constructing a bijection that preserves the permutation sign
between $A_{k,n,N}^+\cup A_{k,n,N}^-$ and the disjoint union
of the sets $\mathcal C_N(\pi,\sigma;n)$ over
$(\pi,\sigma)\in\mathfrak B_k$.

Fix $X\in A_{k,n,N}^+\cup A_{k,n,N}^-$ and define
$T,\pi,\mathbf x,\sigma$ by the construction preceding the
lemma. That discussion shows that $(\pi,\sigma)\in\mathfrak B_k$
and $\operatorname{sgn}(\phi_X)=\operatorname{sgn}(\sigma)$.
The block positions are pairwise distinct, and
$\sum_i p_i=n$ together with
\eqref{eq:finite-block-equation-body} gives
$(\mathbf p,\mathbf q,\mathbf x)\in
\mathcal C_N(\pi,\sigma;n)$. For any triple in $\mathcal C_N(\pi,\sigma;n)$, summing
the block equations over $B\in\pi$ gives
$\sum_j q_j=\sum_i p_i=n$, so both $\mathbf p$ and
$\mathbf q$ are compositions of $n$ into $k$ positive parts.

To construct the inverse map, take $(\pi,\sigma)\in\mathfrak B_k$ and
$(\mathbf p,\mathbf q,\mathbf x)\in
\mathcal C_N(\pi,\sigma;n)$. Each signed label belongs to a unique block
of $\pi$, so the assignments
\begin{equation*}
r_i:=x_B\quad\text{when }i^+\in B,
\qquad
s_j:=x_B\quad\text{when }j^-\in B
\end{equation*}
are well defined and give $\mathbf r,\mathbf s\in[N]_0^k$. Let
$X\in\mathbb Z^{4k}$ be the point corresponding to
$(\mathbf r,\mathbf s,\mathbf p,\mathbf q)$ under the identification
\eqref{eq:indices-exact-proposition}.

Every coordinate $r_i$ or $s_j$ equals $x_B$ for some $B\in\pi$.
Conversely, every block is non-empty, so each $x_B$ is equal to at least
one coordinate $r_i$ or $s_j$. The occupied set of $X$, defined by the same
formula as \eqref{eq:occupied-position-set-lattice}, is therefore
\begin{equation*}
T
=
\{r_i:1\leq i\leq k\}
\cup
\{s_j:1\leq j\leq k\}
=
\{x_B:B\in\pi\}.
\end{equation*}
Condition \textup{(ii)} ensures that distinct blocks have distinct
positions, so the blocks determined by $X$ are exactly the blocks of
$\pi$.
At the position $x_B$, \eqref{eq:zero-condition} and
\eqref{eq:block-displacement-lattice} give
\begin{equation*}
h_{x_B}(\mathbf r,\mathbf s)
=
\sum_{i\in B^+}p_i-
\sum_{j\in B^-}q_j
=d_B(\mathbf p,\mathbf q).
\end{equation*}
The block equation \eqref{eq:finite-block-equation-body} and
\eqref{eq:phi-x-definition} now give
\begin{equation*}
\phi_X(x_B)
=x_B-d_B(\mathbf p,\mathbf q)
=x_{\sigma(B)}.
\end{equation*}
For $m\in[N]_0\setminus T$, no index is assigned to $m$, so $\phi_X(m)=m$. Thus
$\phi_X$ fixes the unoccupied positions and permutes the occupied positions
according to $\sigma$. In particular, $\phi_X$ is a permutation of $[N]_0$
and
\begin{equation*}
\operatorname{sgn}(\phi_X)=\operatorname{sgn}(\sigma).
\end{equation*}
We also have $r_i,s_j\in[N]_0$, $p_i,q_j>0$, and
$\sum_i p_i=\sum_jq_j=n$. The lattice point $X$ therefore belongs to
$A_{k,n,N}^+$ when $\sigma$ is even and to $A_{k,n,N}^-$ when $\sigma$ is
odd.

The two constructions are inverse and give a bijection
\begin{equation*}
A_{k,n,N}^{+}\sqcup A_{k,n,N}^{-}
\longleftrightarrow
\bigsqcup_{(\pi,\sigma)\in\mathfrak B_k}
\mathcal C_N(\pi,\sigma;n).
\end{equation*}
Under this bijection, a point $X$ contributes
$\operatorname{sgn}(\phi_X)$ to the signed count in
\eqref{eq:exact-signed-lattice-count}, while the corresponding triple
contributes $\operatorname{sgn}(\sigma)$ to the sum in
\eqref{eq:exact-admissible-block-decomposition}. These signs are equal, so
summing over the bijection proves
\eqref{eq:exact-admissible-block-decomposition}.
\end{proof}

\subsubsection{The asymptotic count}

To estimate the summands in
\eqref{eq:exact-admissible-block-decomposition}, fix an admissible pair
$(\pi,\sigma)\in\mathfrak B_k$ and write
\begin{equation*}
\operatorname{Cyc}(\sigma)=\{\mathcal C_1,\ldots,\mathcal C_s\}.
\end{equation*}
Here $s$ is the number of cycles of $\sigma$. The unions of the blocks in
distinct cycles are disjoint. Admissibility ensures that each such union
contains a $+$ index, so $1\leq s\leq k$. To apply Davenport's theorem
directly, we first omit
condition \textup{(ii)} in
Lemma~\ref{lem:exact-admissible-block-decomposition}. A triple satisfying
the remaining requirements is called a \emph{relaxed configuration}. We
now choose independent integer coordinates for these configurations. We shall show that their possible values are precisely the
lattice points of a convex polytope. The next
estimate shows that the excluded coincidences contribute only to the lower
order error term.

We refer to each $p_i$ or $q_j$ as a composition entry. To separate the composition entries according to their cycles, for
$1\leq a\leq s$ set
\begin{equation*}
 P_a:=\{i:i^+\in B\text{ for some }B\in\mathcal C_a\},
 \qquad
 Q_a:=\{j:j^-\in B\text{ for some }B\in\mathcal C_a\}.
\end{equation*}
The definition of $\mathfrak B_k$ in
\eqref{eq:block-permutation-set-lattice} shows that $P_a$ and $Q_a$ are
non-empty. The cycles partition the blocks of $\pi$, so
$P_1,\ldots,P_s$ and $Q_1,\ldots,Q_s$ each form a partition of
$\{1,\ldots,k\}$.

To select one position coordinate and one omitted entry from each
composition in every cycle, choose a reference block
$B_a^0\in\mathcal C_a$, together with indices $i_a\in P_a$ and
$j_a\in Q_a$, for each $a\in\{1,\ldots,s\}$. The superscript $0$
identifies the reference block, and we call $x_{B_a^0}$ the reference
position. The entries $p_{i_a}$ and $q_{j_a}$ will be recovered from the
total assigned to $\mathcal C_a$. To determine this total, sum
\eqref{eq:finite-block-equation-body} over the blocks in $\mathcal C_a$.
The position differences cancel around the cycle, giving
\begin{equation*}
 \sum_{i\in P_a}p_i
 =
 \sum_{j\in Q_a}q_j
 =:t_a.
\end{equation*}
We call $t_a$ the \emph{cycle total}. It is the total of the entries $p_i$
assigned to $\mathcal C_a$, which equals the total of the entries $q_j$
assigned to the same cycle.
Because $P_1,\ldots,P_s$ partition $\{1,\ldots,k\}$,
\begin{equation*}
 t_1+\cdots+t_s=\sum_{i=1}^k p_i=n,
\end{equation*}
so only $t_1,\ldots,t_{s-1}$ are free. A relaxed configuration is therefore
determined by
\begin{equation}
\label{eq:relaxed-free-coordinates}
 t_1,\ldots,t_{s-1},
 \qquad
 (p_i)_{i\notin\{i_1,\ldots,i_s\}},
 \qquad
 (q_j)_{j\notin\{j_1,\ldots,j_s\}},
 \qquad
 (x_{B_a^0})_{1\leq a\leq s}.
\end{equation}
There are
\begin{equation*}
 (s-1)+(k-s)+(k-s)+s=2k-1
\end{equation*}
integer variables. For fixed $n$, an affine function of these variables is
a constant plus a linear combination of them. It is integral affine when
the constant and every coefficient are integers. The final cycle total and
the omitted composition entries are recovered from
\begin{equation}
\label{eq:relaxed-coordinate-recovery}
\begin{aligned}
 t_s&=n-\sum_{a<s}t_a, \qquad p_{i_a}&=t_a-\sum_{i\in P_a\setminus\{i_a\}}p_i,
 \qquad
 q_{j_a}=t_a-\sum_{j\in Q_a\setminus\{j_a\}}q_j.
\end{aligned}
\end{equation}

Once the reference position $x_{B_a^0}$ is fixed, the block equations
determine the remaining positions in $\mathcal C_a$. To express this
dependence, we repeat the partial displacement notation from
\eqref{eq:D_CJ_def}. Let
\begin{equation*}
\mathcal C=(B_1,\ldots,B_\ell)\in\operatorname{Cyc}(\sigma),
\qquad
\sigma(B_j)=B_{j+1}\quad(1\leq j\leq\ell),
\qquad
B_{\ell+1}:=B_1.
\end{equation*}
When $\mathcal C=\mathcal C_a$, choose the initial block $B_1:=B_a^0$.
Define the partial displacements of $\mathcal C$ by
\begin{equation}
\label{eq:D-cycle-lattice}
 D_{\mathcal C,0}(\mathbf u,\mathbf v):=0,
 \qquad
 D_{\mathcal C,j}(\mathbf u,\mathbf v)
 :=
 \sum_{b=1}^{j}d_{B_b}(\mathbf u,\mathbf v),
 \qquad 1\leq j\leq\ell.
\end{equation}
The quantity $D_{\mathcal C,j}(\mathbf u,\mathbf v)$ is the cumulative
displacement after the first $j$ blocks of $\mathcal C$.
For $(\mathbf u,\mathbf v)=(\mathbf p,\mathbf q)$, iterating
\eqref{eq:finite-block-equation-body} gives
\begin{equation}
\label{eq:cycle-position-recovery}
 x_{B_{j+1}}=x_{B_1}-D_{\mathcal C,j}(\mathbf p,\mathbf q)
 \qquad(0\leq j<\ell),
\end{equation}
and returning to $B_1$ gives the cycle balance condition
$D_{\mathcal C,\ell}(\mathbf p,\mathbf q)=0$.

For the scaled count, the range of the partial displacements controls the
available interval for the reference position. This range is the cycle
width introduced in \eqref{eq:cycle-width}:
\begin{equation}
\label{eq:cycle-width-lattice}
 \omega_{\mathcal C}(\mathbf u,\mathbf v)
 :=
 \max_{0\leq j<\ell}D_{\mathcal C,j}(\mathbf u,\mathbf v)
 -
 \min_{0\leq j<\ell}D_{\mathcal C,j}(\mathbf u,\mathbf v).
\end{equation}
To combine the cycle balance condition and the length of this interval in
one factor, use the kernel introduced in \eqref{eq:cycle-kernel}:
\begin{equation}
\label{eq:cycle-kernel-lattice}
 \mathcal W_{\mathcal C}(\mathbf u,\mathbf v)
 :=
 \delta\!\left(D_{\mathcal C,\ell}(\mathbf u,\mathbf v)\right)
 \bigl(1-\omega_{\mathcal C}(\mathbf u,\mathbf v)\bigr)_+.
\end{equation}
When $D_{\mathcal C,\ell}(\mathbf u,\mathbf v)=0$, changing the initial
block reorders the partial displacements cyclically and translates them all
by a common amount. Thus $\omega_{\mathcal C}$ and
$\mathcal W_{\mathcal C}$ depend only on the directed cycle $\mathcal C$.

Equation~\eqref{eq:relaxed-coordinate-recovery} expresses the final cycle
total $t_s$ and the omitted entries $p_{i_a}$ and $q_{j_a}$ as integral
affine functions of the variables in
\eqref{eq:relaxed-free-coordinates}. Equation~\eqref{eq:cycle-position-recovery}
does the same for every block position other than the reference position
of its cycle. In each expression, every linear coefficient belongs to
$\{0,1,-1\}$.

The requirements $p_i,q_j\geq1$ and $0\leq x_B\leq N-1$ therefore become
finitely many affine inequalities in the chosen $2k-1$ variables. Their common
solution set in $\mathbb R^{2k-1}$ is therefore convex. Every free composition entry and cycle total lies
between $1$ and $n$, while every reference position lies in $[0, N-1]$, so
the solution set is bounded. It is therefore a convex polytope, and its
lattice points in the standard lattice $\mathbb Z^{2k-1}$ are exactly the
relaxed configurations. If $n=\lfloor cN\rfloor$ and $c$ ranges over a
fixed compact set $\mathcal K\subset(0,\infty)$, each chosen variable is
$O_{k,\mathcal K}(N)$. Distinct blocks must represent distinct positions. The next estimate shows
that relaxed configurations with two distinct blocks at the same position
contribute only to the error term.

\begin{lemma}
\label{lem:coincident-blocks-lower-order}
Fix an integer $k\geq1$, an admissible pair
$(\pi,\sigma)\in\mathfrak B_k$, and a compact set
$\mathcal K\subset(0,\infty)$. Uniformly for $c\in\mathcal K$, the number
of relaxed configurations with $n=\lfloor cN\rfloor$ and
$x_B=x_C$ for at least one pair of distinct blocks is
$O_{k,\mathcal K}(N^{2k-2})$.
\end{lemma}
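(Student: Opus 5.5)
We bound the count by a union bound over the finitely many unordered pairs $\{B,C\}$ of distinct blocks of $\pi$, and show that for each fixed pair the relaxed configurations with $x_B=x_C$ number $O_{k,\mathcal K}(N^{2k-2})$. In the free coordinates \eqref{eq:relaxed-free-coordinates}, relaxed configurations are the lattice points of a convex polytope contained in a box of side $O_{k,\mathcal K}(N)$. By \eqref{eq:relaxed-coordinate-recovery} and \eqref{eq:cycle-position-recovery}, the quantity $x_B-x_C$ is an integral affine function of these $2k-1$ variables. The plan is to show that this function is \emph{not constant}, i.e.\ some free variable appears in it with a non-zero integer coefficient. Then we argue exactly as in the hyperplane-counting step at the end of the proof of Lemma~\ref{lem:weighted-lattice-volume}. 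There are $O(N^{2k-2})$ choices for the remaining $2k-2$ variables inside the box. Once they are fixed, the equation $x_B=x_C$ allows at most one value of the distinguished variable. Summing over the $O_k(1)$ pairs then gives the lemma, uniformly in $c\in\mathcal K$.

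The non-constancy splits into two cases. First, suppose $B\in\mathcal C_a$ and $C\in\mathcal C_b$ with $a\neq b$. By \eqref{eq:cycle-position-recovery}, $x_B=x_{B_a^0}-D$ and $x_C=x_{B_b^0}-D'$, where $D,D'$ are linear in the composition entries only. Hence $x_B-x_C$ contains the free reference coordinates $x_{B_a^0}$ and $x_{B_b^0}$ with coefficients $+1$ and $-1$, and it is non-constant.

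Second, suppose $B$ and $C$ lie in the same cycle $\mathcal C=(B_1,\ldots,B_\ell)$, say $B=B_{j+1}$ and $C=B_{m+1}$ with $0\le j<m<\ell$. The reference position cancels, and
\[
x_B-x_C=D_{\mathcal C,m}(\mathbf p,\mathbf q)-D_{\mathcal C,j}(\mathbf p,\mathbf q)=\sum_{i\in S^+}p_i-\sum_{i\in S^-}q_i=:L(\mathbf p,\mathbf q),
\]
where $S^\pm$ collect the $\pm$ indices of the blocks $B_{j+1},\ldots,B_m$.

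This is the step requiring care, since substituting the omitted entries $p_{i_a},q_{j_a}$ might a priori cancel everything. I would handle it by linear algebra rather than explicit substitution. The free coordinates parametrise affinely the affine subspace of $(\mathbf p,\mathbf q)\in\mathbb R^{2k}$ cut out by $\sum_{P_b}p_i=\sum_{Q_b}q_i$ for each $b$, together with $\sum_i p_i=n$. Hence $L$ is constant in the free variables exactly when the linear form $L$ lies in the span of the functionals $\sum_{P_b}p_i-\sum_{Q_b}q_i$ $(1\le b\le s)$ and $\sum_i p_i$. Since $L$ only involves indices of cycle $\mathcal C_a$, and the sets $P_b$ and $Q_b$ are disjoint across cycles, comparing coefficients forces the following. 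Either $L=\lambda\bigl(\sum_{P_a}p_i-\sum_{Q_a}q_i\bigr)$, which requires $S^+=P_a$ and $S^-=Q_a$ (or $L=0$). Or the coefficient of $\sum_i p_i$ is non-zero; but this contributes to every $p_i$, including those outside cycle $a$ when $s\ge2$, and when $s=1$ it gives no $q$-terms while $Q_a\neq\varnothing$, which again reduces to the proportionality case.

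The arc $B_{j+1},\ldots,B_m$ consists of a non-empty, proper set of the blocks of $\mathcal C$. The blocks are non-empty and disjoint, so the signed label set $S^+\cup S^-$ is non-empty and is a proper subset of the labels of $\mathcal C$. Hence neither alternative holds, and $L$ is non-constant on the parameter space. I expect this span argument, including the bookkeeping of the total-sum constraint, to be the main point to write carefully. The counting step itself is routine once non-constancy is established.
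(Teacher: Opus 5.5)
\textbf{Gap: the same-cycle case is not always non-constant.} Your different-cycle case and the counting step are fine. For $s\geq2$ your span argument is also correct. In any representation
$L=\sum_b\lambda_b\bigl(\sum_{P_b}p_i-\sum_{Q_b}q_j\bigr)+\mu\sum_i p_i$, comparing the $q$- and $p$-coefficients on a cycle $b\neq a$ forces $\lambda_b=0$ and then $\mu=0$. That leaves only the excluded proportional case.

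The gap is your final assertion that $L$ is always non-constant in the same-cycle case. This is false when $\sigma$ has a single cycle ($s=1$). There the span contains $\sum_i p_i$ and $\sum_j q_j$ separately. So if the arc $B_{j+1},\ldots,B_m$ carries all the $+$ labels and none of the $-$ labels, then $L=\sum_i p_i=n$ is constant. The reverse situation gives $L=-\sum_j q_j=-n$, also constant. Your sentence about $s=1$ (``gives no $q$-terms while $Q_a\neq\varnothing$'') does not exclude this, since nothing requires $L$ to contain $q$-terms.

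A concrete instance is $k=1$, $\pi=\{\{1^+\},\{1^-\}\}$, with $\sigma$ the transposition. This pair is admissible and gives $x_{B_1}-x_{B_2}=p_1=n$. Here the only free coordinate is $x_{B_1}$, and the coincidence condition does not involve it. So your step ``choose a variable with non-zero coefficient and solve for it'' has nothing to act on.

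\textbf{The fix.} The repair is short, and it is the third case in the paper's proof. Suppose the arc contains labels of only one sign and the rest of the cycle only the other sign. Then $x_B-x_C=\pm t_a$ with $t_a>0$ (here $t_a=n\geq1$), so $x_B=x_C$ has no solutions and such pairs contribute nothing. What you need is the dichotomy ``non-constant affine equation, or empty'', not ``always non-constant''.

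Apart from this, your route matches the paper's. The paper tests non-constancy by perturbation instead of a span computation. Replacing $p_i\mapsto p_i+z$ and $p_{i'}\mapsto p_{i'}-z$, for same-sign labels on opposite sides of the arc, keeps $t_a$ fixed but shifts $H_{\mathcal R}$. This avoids the span bookkeeping and isolates the degenerate case $H_{\mathcal R}=\pm t_a$ directly.
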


\begin{proof}
Use the $2k-1$ integer variables chosen above. Fix distinct blocks $B$ and
$C$. We shall show that the condition $x_B=x_C$ is either impossible or is
given by one affine equation with at least one non-zero coefficient in the
chosen variables.

Suppose first that $B$ and $C$ lie in different cycles. Iterating the block
equations in \eqref{eq:finite-block-equation-body} expresses each block
position as the reference position for its cycle minus a partial
displacement. The equation $x_B=x_C$ has coefficient $1$ in the first
reference position and coefficient $-1$ in the second. It is therefore an
affine equation with a non-zero coefficient in the chosen variables.

Now suppose that $B$ and $C$ lie in the same directed cycle
$\mathcal C_a$. Let $\mathcal R$ be the set of blocks encountered when
moving from $B$ in the direction of $\sigma$, up to but not including $C$.
This is a non-empty proper subset of $\mathcal C_a$. To distinguish the
labels in this segment from all labels in the cycle, define
\begin{equation*}
 \Omega_{\mathcal R}:=\bigcup_{D\in\mathcal R}D,
 \qquad
 \Omega_a:=\bigcup_{D\in\mathcal C_a}D.
\end{equation*}
Adding \eqref{eq:finite-block-equation-body} over the blocks in
$\mathcal R$ gives
\begin{equation}
\label{eq:coincident-block-directed-segment}
 x_B-x_C
 =
 H_{\mathcal R}(\mathbf p,\mathbf q),
 \qquad
 H_{\mathcal R}(\mathbf p,\mathbf q)
 :=
 \sum_{D\in\mathcal R}d_D(\mathbf p,\mathbf q).
\end{equation}
Thus $x_B=x_C$ is equivalent to
$H_{\mathcal R}(\mathbf p,\mathbf q)=0$.

Assume that there are positive labels
$i^+\in\Omega_{\mathcal R}$ and
$(i')^+\in\Omega_a\setminus\Omega_{\mathcal R}$. For $z\in\mathbb R$,
replace $p_i$ by $p_i+z$ and $p_{i'}$ by $p_{i'}-z$. The cycle total
$t_a$ is unchanged, while $H_{\mathcal R}$ changes by $z$. The same
argument applies if there are negative labels
$j^-\in\Omega_{\mathcal R}$ and
$(j')^-\in\Omega_a\setminus\Omega_{\mathcal R}$. Replacing $q_j$ by
$q_j+z$ and $q_{j'}$ by $q_{j'}-z$ preserves $t_a$ and changes
$H_{\mathcal R}$ by $-z$. These replacements are used only to test whether
the expression in \eqref{eq:coincident-block-directed-segment} is constant
on the affine space defined by the equality
constraints for the relaxed configurations. The replaced values
need not satisfy the positivity and position inequalities. In either case,
$H_{\mathcal R}=0$ is an affine equation with a non-zero coefficient in
the chosen variables.

It remains to consider the case in which neither sign occurs in both
$\Omega_{\mathcal R}$ and
$\Omega_a\setminus\Omega_{\mathcal R}$. Both sets contain a signed label
because every block is non-empty. Admissibility ensures that $\Omega_a$
contains labels of both signs. The two sets must therefore contain opposite
signs. If $\Omega_{\mathcal R}$ contains the positive labels, then
$H_{\mathcal R}=t_a$. If it contains the negative labels, then
$H_{\mathcal R}=-t_a$. Every composition entry is positive, so $t_a>0$ and
the equation $H_{\mathcal R}=0$ has no solutions in either case.

For each pair of distinct blocks, the coincidence condition is now either
impossible or one affine equation with a non-zero coefficient in the
$2k-1$ free integer variables. Choose a variable with a non-zero
coefficient in this equation. Each of the other $2k-2$ variables has
$O_{k,\mathcal K}(N)$ possible integer values. Once these variables are
fixed, the equation determines at most one value of the chosen variable.
There are at most $\binom{2k}{2}$ pairs of distinct blocks, which gives the
stated bound.
\end{proof}

To identify the leading volume associated with an admissible pair, we now
introduce a continuous integral. The parameter $t$ is the common total of
the two compositions, and $M$ is the length of the interval available to
the block positions. For a cycle
$\mathcal C=(B_1,\ldots,B_\ell)$, write
$|\mathcal C|:=\ell$ for its number of blocks. By
\eqref{eq:D-cycle-lattice},
$D_{\mathcal C,|\mathcal C|}(\mathbf u,\mathbf v)$ is the total displacement
around the cycle:
\begin{equation}
\label{eq:total-cycle-displacement}
 D_{\mathcal C,|\mathcal C|}(\mathbf u,\mathbf v)
 =
 \sum_{j=1}^{\ell}d_{B_j}(\mathbf u,\mathbf v).
\end{equation}
The cycle balance condition means that the quantity in
\eqref{eq:total-cycle-displacement} is zero.

For $(\pi,\sigma)\in\mathfrak B_k$ and $t,M>0$, define
\begin{equation}
\label{eq:J-pi-sigma-definition}
 \mathcal J_{\pi,\sigma}(t;M)
 :=
 \int_{\mathbb R_{>0}^{2k}}
 \delta\!\left(\sum_{i=1}^k u_i-t\right)
 \prod_{\mathcal C\in\operatorname{Cyc}(\sigma)}
 \delta\!\left(
 D_{\mathcal C,|\mathcal C|}(\mathbf u,\mathbf v)
 \right)
 \bigl(M-\omega_{\mathcal C}(\mathbf u,\mathbf v)\bigr)_+
 \,d\mathbf u\,d\mathbf v.
\end{equation}
The first Dirac delta restricts the composition total to $t$. For each
cycle, the second Dirac delta enforces the cycle balance condition. On this
balance set, the positive part is the length of the interval available to
the reference position. Equation~\eqref{eq:cycle-kernel-lattice} shows that
$\mathcal J_{\pi,\sigma}(c;1)$ is the integral associated with
$(\pi,\sigma)$ in Theorem~\ref{thm:delta-cycle-formula}.

After division by $N$, each composition entry is at least $1/N$, the common
composition total is $\lfloor cN\rfloor/N$, and the block positions lie in
$[0,(N-1)/N]$. The next lemma gives uniform estimates for replacing these
three quantities by $0$, $c$, and $1$.

\begin{lemma}
\label{lem:cycle-integral-stability}
Fix an integer $k\geq1$, an admissible pair
$(\pi,\sigma)\in\mathfrak B_k$, and a compact interval
$\mathcal K\subset(0,\infty)$. Let
$\mathcal J_{\pi,\sigma}^{(\varepsilon)}(t;M)$ denote the integral in
\eqref{eq:J-pi-sigma-definition} with the additional restrictions
$u_i,v_j\geq\varepsilon$. Uniformly for $t,t'\in\mathcal K$,
$M,M'\in[1/2,3/2]$, and all sufficiently small $\varepsilon\geq0$,
\begin{align*}
 \left|
 \mathcal J_{\pi,\sigma}(t;M)
 -\mathcal J_{\pi,\sigma}(t';M')
 \right|
 &\ll_{k,\mathcal K}
 |t-t'|+|M-M'|,\\
 \mathcal J_{\pi,\sigma}^{(\varepsilon)}(t;M)
 &=
 \mathcal J_{\pi,\sigma}(t;M)
 +O_{k,\mathcal K}(\varepsilon).
\end{align*}
\end{lemma}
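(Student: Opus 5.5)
The plan is to remove all the Dirac deltas by passing to the same $2k-1$ free coordinates used for relaxed configurations. Once that is done, $\mathcal J_{\pi,\sigma}(t;M)$ becomes an ordinary integral of a bounded, piecewise-linear, Lipschitz weight over a polytope whose defining inequalities depend affinely on $t$ and $\varepsilon$. The estimates will then follow exactly as in the Lipschitz argument for $J_{k,r}(d)$ in the proof of Theorem~\ref{thm:leading-volume}.

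\textbf{Step 1: eliminate the deltas.} For each cycle $\mathcal C_a$ the balance delta reads $\sum_{i\in P_a}u_i-\sum_{j\in Q_a}v_j=0$. I integrate it out in $v_{j_a}$; the Jacobian is $1$. The total delta is handled in the same way:
\begin{itemize}
\item introduce cycle totals $t_a$ and set $t_s=t-\sum_{a<s}t_a$;
\item recover $u_{i_a}$ from $t_a$, and $v_{j_a}$ from the same relation;
\item this is a unimodular linear change of variables, mirroring \eqref{eq:relaxed-coordinate-recovery}.
\end{itemize}
The result is
\begin{equation*}
\mathcal J_{\pi,\sigma}^{(\varepsilon)}(t;M)=\int_{V^{(\varepsilon)}(t)}\prod_{a=1}^s\bigl(M-\omega_{\mathcal C_a}(\mathbf z;t)\bigr)_+\,d\mathbf z .
\end{equation*}
Here $\mathbf z\in\mathbb R^{2k-2-(s-1)}$ collects the free entries and totals. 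The region $V^{(\varepsilon)}(t)$ is cut out by the affine inequalities "every $u_i,v_j\geq\varepsilon$", each of which has the form $L(\mathbf z)+\kappa t\geq\varepsilon$ with $\kappa\in\{0,1\}$.

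\textbf{Step 2: bounds on the domain and the weight.} Every $u_i$ and every $v_j$ is at most $t$, so $V^{(\varepsilon)}(t)\subset[0,\max\mathcal K]^{\dim}$ uniformly in $t$ and $\varepsilon$. Each width $\omega_{\mathcal C_a}$ is a max minus a min of linear forms in $(\mathbf z,t)$ with coefficients in $\{0,\pm1\}$, so it is Lipschitz in $(\mathbf z,t)$ with a constant depending only on $k$. Since $x\mapsto(x)_+$ is $1$-Lipschitz and each factor is at most $3/2$, the product weight $F(\mathbf z;t,M)$ satisfies
\begin{itemize}
\item $|F|\ll1$;
\item $|F(\mathbf z;t,M)-F(\mathbf z;t',M')|\ll_k|t-t'|+|M-M'|$.
\end{itemize}

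\textbf{Step 3: compare the integrals.} As in \eqref{eq:Jkr-lipschitz}, I split the difference of the two integrals into the integral of the weight difference over the common region, plus $\sup|F|$ times the volume of the symmetric difference of the regions. Each defining inequality moves by at most $|t-t'|$ (respectively $\varepsilon$) in a direction where some coordinate has coefficient $\pm1$. Fubini in that coordinate then bounds the swept slab by $\ll|t-t'|$ (respectively $\ll\varepsilon$) times a bounded projection volume. Summing over the finitely many inequalities gives the region change. Together with Step 2, this gives both claimed estimates; the case $\varepsilon=0$ versus small $\varepsilon>0$ is the second one.

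\textbf{Main obstacle.} The difficulty I expect is purely bookkeeping in Step 1. I must verify two things:
\begin{itemize}
\item the change of variables has Jacobian exactly $1$, i.e.\ the delta functions contribute no stray constants;
\item the slab-volume argument applies to every constraint.
\end{itemize}
The slab argument needs each constraint, for example $u_{i_a}=t_a-\sum u_i\geq\varepsilon$, to involve a free coordinate with a nonzero coefficient. This holds for every constraint, including the one involving $t_s$, because that constraint contains $-t_a$ for each $a<s$. The degenerate case $s=1$ with singleton $P_1,Q_1$ also has to be covered. There the domain is a single point, both sides are explicit functions, and the estimates hold trivially.
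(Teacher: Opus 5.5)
Your proof is correct, but for the $(t,M)$ estimate it takes a different route from the paper. The paper uses homogeneity. The substitution $\mathbf u=t\boldsymbol\alpha$, $\mathbf v=t\boldsymbol\beta$ gives $\mathcal J_{\pi,\sigma}(t;M)=t^{2k-s-1}\int_{\mathcal S_{\pi,\sigma}}\prod_{\mathcal C}(M-t\omega_{\mathcal C})_+\,d\mu_{\pi,\sigma}$ over the fixed normalised balance polytope $\mathcal S_{\pi,\sigma}$. The Lipschitz bound in $(t,M)$ therefore comes from the integrand alone, and no symmetric-difference estimate is needed. In these coordinates the $\varepsilon$-cut becomes $\alpha_i,\beta_j\geq\varepsilon/t$, and the removed part is covered by slabs $0\leq L\leq\varepsilon/\min\mathcal K$. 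A strictly positive point of $\mathcal S_{\pi,\sigma}$ is used to show that each coordinate function $L$ is either non-constant or a positive constant.

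You keep the unscaled free coordinates, so your domain moves with both $t$ and $\varepsilon$, and you handle both estimates by the same symmetric-difference and Fubini mechanism as in \eqref{eq:Jkr-lipschitz}. Your direct check that every constraint contains a free coordinate with coefficient $\pm1$ replaces the paper's positive-point argument. For $s=1$, $k\geq2$ the reason is the other entries of $P_1=Q_1=\{1,\ldots,k\}$, since no $t_a$ are present there, but the conclusion still holds. Your $\varepsilon$-bound also needs no lower bound on $t$. The cost is that you must track which constraints depend on $t$ and isolate $k=1$, where the slice is a single point. The scaling argument is shorter for the $(t,M)$ estimate. Yours is more uniform and stays closer to the finite-$N$ polytope used in the proof of Theorem~\ref{thm:delta-cycle-formula}.
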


\begin{proof}
Write
$\operatorname{Cyc}(\sigma)=\{\mathcal C_1,\ldots,\mathcal C_s\}$, and
retain the sets $P_a,Q_a$ and the indices $i_a,j_a$ chosen above. To
separate the scale $t$ from the proportions assigned to the composition
entries, define the \emph{normalised balance polytope}
\begin{equation}
\label{eq:normalised-balance-polytope}
 \mathcal S_{\pi,\sigma}
 :=
 \left\{
 (\boldsymbol\alpha,\boldsymbol\beta)\in\mathbb R_{\geq0}^{2k}:
 \begin{array}{l}
 \displaystyle\sum_{i=1}^k\alpha_i=1,\\[0.4em]
 D_{\mathcal C,|\mathcal C|}
 (\boldsymbol\alpha,\boldsymbol\beta)=0
 \quad\text{for every }\mathcal C\in\operatorname{Cyc}(\sigma)
 \end{array}
 \right\}.
\end{equation}
Thus $\mathcal S_{\pi,\sigma}$ consists of non-negative composition
proportions for which $\sum_i\alpha_i=1$ and every cycle is balanced. Adding
the cycle balance equations gives
\begin{equation*}
 \sum_{i=1}^k\alpha_i-
 \sum_{j=1}^k\beta_j=0.
\end{equation*}
The first equation in \eqref{eq:normalised-balance-polytope} then gives
$\sum_j\beta_j=1$. Both coordinate sums are fixed, so
$\mathcal S_{\pi,\sigma}$ is bounded. We shall also need a point of this polytope at which every coordinate is
positive. Choose $\rho_1,\ldots,\rho_s>0$ with
$\rho_1+\cdots+\rho_s=1$. For each $a$, distribute $\rho_a$ among the
coordinates $\alpha_i$ with $i\in P_a$, using positive values, and
distribute the same amount among the coordinates $\beta_j$ with
$j\in Q_a$. The sets $P_a$ and $Q_a$ are non-empty by admissibility, so
this defines a point of $\mathcal S_{\pi,\sigma}$ with every coordinate
positive.

To describe the measure on the polytope in
\eqref{eq:normalised-balance-polytope}, first write its balance equation for
$\mathcal C_a$ explicitly. By \eqref{eq:total-cycle-displacement}, this
equation is
\begin{equation*}
D_{\mathcal C_a,|\mathcal C_a|}
(\boldsymbol\alpha,\boldsymbol\beta)
=
\sum_{i\in P_a}\alpha_i
-
\sum_{j\in Q_a}\beta_j
=
0.
\end{equation*}
Use this equation to eliminate $\beta_{j_a}$ for each $1\leq a\leq s$,
and use $\sum_i\alpha_i=1$ to eliminate $\alpha_{i_s}$. Order the equations
by the $s$ cycle balance equations and then the total equation, and order
the eliminated variables as
\begin{equation*}
\beta_{j_1},\ldots,\beta_{j_s},\alpha_{i_s}.
\end{equation*}
Each $\beta_{j_a}$ occurs in its own cycle balance equation with coefficient
$-1$ and in no other equation. The variable $\alpha_{i_s}$ occurs with
coefficient $1$ in the balance equation for $\mathcal C_s$ and in the
total equation. The resulting coefficient matrix is therefore triangular,
with diagonal entries $-1,\ldots,-1,1$. Its determinant is $(-1)^s$, whose
absolute value is $1$. Thus the equations are independent, and eliminating
them introduces no additional factor into the measure. They leave
$2k-s-1$ free coordinates, and we denote ordinary Lebesgue measure in
these coordinates by $d\mu_{\pi,\sigma}$. When $2k-s-1=0$, this measure assigns mass $1$ to the single point.

The integral in \eqref{eq:J-pi-sigma-definition} uses strictly positive
composition variables, whereas
\eqref{eq:normalised-balance-polytope} includes points with zero
coordinates. The positive point constructed above shows that no coordinate
function $\alpha_i$ or $\beta_j$ vanishes identically on the polytope. If
one of these coordinate functions is not constant, its zero set is given by
one affine equation with a non-zero coefficient in the free coordinates.
Such a set has measure zero with respect to $d\mu_{\pi,\sigma}$. If the
coordinate function is constant, it is positive and its zero set is empty.
We may therefore integrate over the closed polytope without changing the
value. The same observation will estimate the parts near the boundary that
are removed by a positive lower bound.

Set $\mathbf u=t\boldsymbol\alpha$ and
$\mathbf v=t\boldsymbol\beta$. The $2k$ differentials contribute
$t^{2k}$. The constraint on the composition total contributes $t^{-1}$,
because
\begin{equation*}
 \delta\!\left(\sum_i u_i-t\right)
 =
 t^{-1}\delta\!\left(\sum_i\alpha_i-1\right).
\end{equation*}
Each of the $s$ cycle balance constraints contributes another factor
$t^{-1}$. The linearity of the block displacements in
\eqref{eq:block-displacement-lattice} and the definition of the cycle width
in \eqref{eq:cycle-width-lattice} give
\begin{equation*}
 \omega_{\mathcal C}
 (t\boldsymbol\alpha,t\boldsymbol\beta)
 =
 t\omega_{\mathcal C}
 (\boldsymbol\alpha,\boldsymbol\beta).
\end{equation*}
After the $s+1$ constraints are taken into account, the remaining power of
$t$ is $t^{2k-s-1}$. Therefore
\begin{equation}
\label{eq:J-fixed-polytope}
 \mathcal J_{\pi,\sigma}(t;M)
 =
 t^{2k-s-1}
 \int_{\mathcal S_{\pi,\sigma}}
 \prod_{\mathcal C\in\operatorname{Cyc}(\sigma)}
 \bigl(M-t\omega_{\mathcal C}
 (\boldsymbol\alpha,\boldsymbol\beta)\bigr)_+
 \,d\mu_{\pi,\sigma}.
\end{equation}
We first compare the parameters $(t,M)$ and $(t',M')$. Set
\begin{align*}
 A_a
 :={}
 \bigl(M-t\omega_{\mathcal C_a}
 (\boldsymbol\alpha,\boldsymbol\beta)\bigr)_+, \qquad
 A'_a
 :={}
 \bigl(M'-t'\omega_{\mathcal C_a}
 (\boldsymbol\alpha,\boldsymbol\beta)\bigr)_+.
\end{align*}
The cycle widths are bounded on the fixed polytope
$\mathcal S_{\pi,\sigma}$. The parameter ranges are compact, so the
factors $A_a$ and $A'_a$ are bounded uniformly. The inequality
$|x_+-y_+|\leq|x-y|$, where $x_+:=\max\{x,0\}$, gives
\begin{equation*}
 |A_a-A'_a|
 \ll_{k,\mathcal K}
 |t-t'|+|M-M'|.
\end{equation*}
The identity
\begin{equation*}
 \prod_{a=1}^s A_a-
 \prod_{a=1}^s A'_a
 =
 \sum_{a=1}^s
 (A_a-A'_a)
 \prod_{b<a}A'_b
 \prod_{b>a}A_b
\end{equation*}
then gives
\begin{equation*}
 \left|
 \prod_{a=1}^s A_a-
 \prod_{a=1}^s A'_a
 \right|
 \ll_{k,\mathcal K}
 |t-t'|+|M-M'|.
\end{equation*}
The mean value theorem also gives
\begin{equation*}
 \left|t^{2k-s-1}-(t')^{2k-s-1}\right|
 \ll_{k,\mathcal K}|t-t'|.
\end{equation*}
Applying these estimates to \eqref{eq:J-fixed-polytope} proves the first
claim of the lemma.

We now estimate the effect of the positive lower bound. The same change of
variables gives
\begin{align*}
 \mathcal J_{\pi,\sigma}^{(\varepsilon)}(t;M)
 ={}&
 t^{2k-s-1}
 \int_{\mathcal S_{\pi,\sigma}}
 \mathds{1}_{\{\alpha_i,\beta_j\geq\varepsilon/t
 \text{ for every }i,j\}}
 \prod_{\mathcal C\in\operatorname{Cyc}(\sigma)}
 \bigl(M-t\omega_{\mathcal C}
 (\boldsymbol\alpha,\boldsymbol\beta)\bigr)_+
 \,d\mu_{\pi,\sigma}.
\end{align*}
Put $m_{\mathcal K}:=\min\mathcal K>0$. The lower bound in
$\mathcal J_{\pi,\sigma}^{(\varepsilon)}(t;M)$ restricts the integral over
$\mathcal S_{\pi,\sigma}$ to points satisfying
\begin{equation*}
\alpha_i,\beta_j\geq\frac{\varepsilon}{t}
\qquad\text{for every }i,j.
\end{equation*}
A point omitted by this restriction therefore has at least one coordinate
$\alpha_i$ or $\beta_j$ in the interval
\begin{equation*}
0\leq\alpha_i\leq\frac{\varepsilon}{t}
\qquad\text{or}\qquad
0\leq\beta_j\leq\frac{\varepsilon}{t}.
\end{equation*}
As $t\geq m_{\mathcal K}$, the selected coordinate is at most
$\varepsilon/m_{\mathcal K}$.

By a coordinate function, we mean one of the maps
\begin{equation*}
(\boldsymbol\alpha,\boldsymbol\beta)\longmapsto\alpha_i,
\qquad
(\boldsymbol\alpha,\boldsymbol\beta)\longmapsto\beta_j.
\end{equation*}
Denote any one of these functions by $L$. Write
$d:=2k-s-1$ and let
$\boldsymbol\xi=(\xi_1,\ldots,\xi_d)$ be the free coordinates used to
define $d\mu_{\pi,\sigma}$. Eliminating the remaining coordinates expresses
$L$ as an affine function
\begin{equation*}
L(\boldsymbol\xi)
=
b+\sum_{r=1}^{d}c_r\xi_r.
\end{equation*}
The positive point constructed above shows that $L$ is not identically
zero. If $c_1=\cdots=c_d=0$, then $L=b>0$, so
$0\leq L\leq\varepsilon/m_{\mathcal K}$ has no solutions for sufficiently
small $\varepsilon$.

Now suppose that $L$ is not constant. Choose $r$ with $c_r\neq0$, and set
\begin{equation*}
z:=\xi_r,
\qquad
\mathbf w:=
(\xi_1,\ldots,\xi_{r-1},\xi_{r+1},\ldots,\xi_d),
\qquad
a:=c_r.
\end{equation*}
Then
\begin{equation*}
L(z,\mathbf w)
=
az+L_0(\mathbf w),
\qquad
L_0(\mathbf w)
:=
b+\sum_{h\neq r}c_h\xi_h.
\end{equation*}
For each fixed $\mathbf w$, the inequalities
\begin{equation*}
0\leq az+L_0(\mathbf w)
\leq\frac{\varepsilon}{m_{\mathcal K}}
\end{equation*}
restrict $z$ to an interval of length at most
\begin{equation*}
\frac{\varepsilon}{m_{\mathcal K}|a|}.
\end{equation*}
The set of possible values of $\mathbf w$ has bounded volume. Fubini's
theorem therefore gives
\begin{equation*}
 \int_{\mathcal S_{\pi,\sigma}}
 \mathds{1}_{\{0\leq L\leq\varepsilon/m_{\mathcal K}\}}
 \,d\mu_{\pi,\sigma}
 =O_{k,\mathcal K}(\varepsilon).
\end{equation*}
There are $2k$ coordinate functions, so the removed part has volume
$O_{k,\mathcal K}(\varepsilon)$. The factor outside the integral and the
integrand in \eqref{eq:J-fixed-polytope} are bounded uniformly for the
stated parameter ranges. The removed part therefore contributes
$O_{k,\mathcal K}(\varepsilon)$, which proves the second claim.
\end{proof}

\begin{proof}[Proof of Theorem~\ref{thm:delta-cycle-formula}]
Fix $k\in\mathbb Z_{\geq1}$ and $\mathcal K=[c^-,c^+]\subset(0,\infty)$. It is enough to prove the asymptotic estimate uniformly on compact
intervals, because every compact subset of $(0,\infty)$ is contained in
such an interval. For
$c\in\mathcal K$, set
\begin{equation*}
n:=\lfloor cN\rfloor,
\qquad
c_N:=\frac{n}{N},
\qquad
M_N:=\frac{N-1}{N}.
\end{equation*}
The value $c_N$ is the scaled composition total, while $M_N$ is the upper
endpoint for the scaled block positions. To place $c$ and $c_N$ in one
fixed interval, define
\begin{equation*}
\mathcal K_*:=[c^-/2,c^++1].
\end{equation*}
The lower bound $c\geq c^->0$ ensures that $n\geq1$ uniformly for
$c\in\mathcal K$ once $N$ is sufficiently large. For such $N$, we also
have $c,c_N\in\mathcal K_*$ and $M_N,1\in[1/2,3/2]$. We work with such
$N$ throughout. Constants depending on $\mathcal K_*$ may therefore be
included in the notation $O_{k,\mathcal K}$.

Fix an admissible pair $(\pi,\sigma)\in\mathfrak B_k$. Let
$\tilde{\mathcal C}_N(\pi,\sigma;n)$ denote the relaxed configurations
obtained from $\mathcal C_N(\pi,\sigma;n)$ by omitting condition
\textup{(ii)} in
Lemma~\ref{lem:exact-admissible-block-decomposition}. The $2k-1$ integer
coordinates chosen above identify this set with the points of
$\mathbb Z^{2k-1}$ in a convex polytope $\mathcal P_N$. Each cycle total
and each composition coordinate is bounded by $n$, while each reference
position lies in $[0,N-1]$. Thus $\mathcal P_N$ is contained in a closed
ball of radius
\begin{equation*}
 \rho=O_{k,\mathcal K}(N).
\end{equation*}
Applying Theorem~\ref{thm:davenport} with $D=2k-1$ gives
\begin{equation}
\label{eq:relaxed-count-volume}
 \#\tilde{\mathcal C}_N(\pi,\sigma;n)
 =
 \operatorname{vol}_{2k-1}(\mathcal P_N)
 +O_{k,\mathcal K}(N^{2k-2}).
\end{equation}
This remains valid when $\mathcal P_N$ is empty or lower-dimensional,
in which case $\operatorname{vol}_{2k-1}(\mathcal P_N)=0$.
The term $1$ in \eqref{eq:davenport} is absorbed into the error term,
including when $k=1$. The relaxed count in \eqref{eq:relaxed-count-volume} includes
configurations in which two distinct blocks occupy the same position.
Lemma~\ref{lem:coincident-blocks-lower-order} bounds their number by
$O_{k,\mathcal K}(N^{2k-2})$. Removing these configurations gives
\begin{equation}
\label{eq:exact-count-volume}
 \#\mathcal C_N(\pi,\sigma;n)
 =
 \operatorname{vol}_{2k-1}(\mathcal P_N)
 +O_{k,\mathcal K}(N^{2k-2}).
\end{equation}
We next scale the polytope so that its variables remain bounded as
$N\to\infty$. Define
\begin{equation*}
 u_i:=\frac{p_i}{N},
 \qquad
 v_j:=\frac{q_j}{N},
 \qquad
 y_B:=\frac{x_B}{N},
 \qquad
 \tau_a:=\frac{t_a}{N}.
\end{equation*}
The variable $\tau_a$ is the scaled cycle total. Dividing each of the
$2k-1$ free coordinates by $N$ defines a scaled polytope
$\hat{\mathcal P}_N$. Each coordinate contributes one factor of $N$ when
the original volume is recovered, so
\begin{equation}
\label{eq:scaled-polytope-volume}
 \operatorname{vol}_{2k-1}(\mathcal P_N)
 =
 N^{2k-1}
 \operatorname{vol}_{2k-1}(\hat{\mathcal P}_N).
\end{equation}
The scaled composition entries and block positions satisfy
\begin{equation}
\label{eq:scaled-block-conditions}
 u_i,v_j\geq\frac1N,
 \qquad
 \sum_{i=1}^k u_i=c_N,
 \qquad
 0\leq y_B\leq M_N,
 \qquad
 d_B(\mathbf u,\mathbf v)=y_B-y_{\sigma(B)}.
\end{equation}
We use Dirac delta factors to impose the composition total and the cycle
balance conditions. For each cycle $\mathcal C_a$, eliminate $v_{j_a}$
using its balance equation, and eliminate $u_{i_s}$ using
$\sum_i u_i=c_N$. With the cycle equations listed first, the coefficient
matrix for these variables is triangular with diagonal entries
$-1,\ldots,-1,1$. Its determinant has absolute value $1$, so integration
against the Dirac delta factors leaves ordinary Lebesgue measure in the
remaining composition coordinates.

The free coordinates used to define $\hat{\mathcal P}_N$ contain
$\tau_1,\ldots,\tau_{s-1}$ rather than
$u_{i_1},\ldots,u_{i_{s-1}}$. For $a<s$, the definition of the cycle total
gives
\begin{equation*}
 u_{i_a}
 =
 \tau_a-
 \sum_{i\in P_a\setminus\{i_a\}}u_i.
\end{equation*}
Replacing $u_{i_a}$ by $\tau_a$ for every $a<s$ is a triangular change of
coordinates with determinant $1$. The measure obtained after applying the
Dirac delta factors therefore agrees with the measure in the scaled free
coordinates used in \eqref{eq:scaled-polytope-volume}.

Next, we integrate over the block positions. For a cycle
$\mathcal C_a=(B_1,\ldots,B_\ell)$, choose the ordering with
$B_1=B_a^0$, the reference block selected above. Iterating the scaled block
equations in \eqref{eq:scaled-block-conditions} and using the partial
displacements from \eqref{eq:D-cycle-lattice} gives
\begin{equation*}
 y_{B_{j+1}}
 =
 y_{B_1}-D_{\mathcal C_a,j}(\mathbf u,\mathbf v),
 \qquad 0\leq j<\ell.
\end{equation*}
Returning to the reference block imposes the balance condition
\begin{equation*}
 D_{\mathcal C_a,\ell}(\mathbf u,\mathbf v)=0.
\end{equation*}
Under this condition, all positions in the cycle lie in $[0,M_N]$ exactly
when
\begin{equation}
\label{eq:scaled-reference-position-interval}
 y_{B_1}
 \in
 \bigcap_{j=0}^{\ell-1}
 \left[
 D_{\mathcal C_a,j}(\mathbf u,\mathbf v),
 M_N+D_{\mathcal C_a,j}(\mathbf u,\mathbf v)
 \right].
\end{equation}
By the definition of the cycle width in
\eqref{eq:cycle-width-lattice}, the interval in
\eqref{eq:scaled-reference-position-interval} has length
\begin{equation*}
 \bigl(M_N-
 \omega_{\mathcal C_a}(\mathbf u,\mathbf v)\bigr)_+.
\end{equation*}
Using the first $\ell-1$ block equations in cycle order to eliminate
$y_{B_2},\ldots,y_{B_\ell}$ gives a triangular coefficient matrix with
diagonal entries $-1$. Its determinant has absolute value $1$, so no
further factor appears. Integrating the reference position in each cycle
and multiplying the interval lengths gives
\begin{equation}
\label{eq:scaled-volume-cycle-integral}
 \operatorname{vol}_{2k-1}(\hat{\mathcal P}_N)
 =
 \mathcal J_{\pi,\sigma}^{(1/N)}(c_N;M_N).
\end{equation}

Equation \eqref{eq:scaled-volume-cycle-integral} contains three quantities
that depend on $N$: the lower bound $1/N$, the composition total $c_N$,
and the position interval length $M_N$.
Lemma~\ref{lem:cycle-integral-stability} compares them with $0$, $c$, and $1$.
Apply the lemma on $\mathcal K_*$ with
\begin{equation*}
 t=c_N,
 \qquad
 t'=c,
 \qquad
 M=M_N,
 \qquad
 M'=1,
 \qquad
 \varepsilon=\frac1N.
\end{equation*}
The estimate for the lower bound gives
\begin{equation*}
 \mathcal J_{\pi,\sigma}^{(1/N)}(c_N;M_N)
 =
 \mathcal J_{\pi,\sigma}(c_N;M_N)
 +O_{k,\mathcal K}(N^{-1}),
\end{equation*}
while the parameter estimate gives
\begin{align*}
 \left|
 \mathcal J_{\pi,\sigma}(c_N;M_N)
 -\mathcal J_{\pi,\sigma}(c;1)
 \right|
 &\ll_{k,\mathcal K}
 |c_N-c|+|M_N-1|\\
 &\ll_{k,\mathcal K}N^{-1}.
\end{align*}
Thus
\begin{equation}
\label{eq:finite-cycle-integral-limit}
 \mathcal J_{\pi,\sigma}^{(1/N)}(c_N;M_N)
 =
 \mathcal J_{\pi,\sigma}(c;1)
 +O_{k,\mathcal K}(N^{-1}).
\end{equation}
Substituting \eqref{eq:scaled-polytope-volume} into
\eqref{eq:exact-count-volume} gives
\begin{equation}
\label{eq:scaled-count-volume}
 \#\mathcal C_N(\pi,\sigma;n)
 =
 N^{2k-1}
 \operatorname{vol}_{2k-1}(\hat{\mathcal P}_N)
 +O_{k,\mathcal K}(N^{2k-2}).
\end{equation}
Equation \eqref{eq:scaled-volume-cycle-integral} identifies the volume in
\eqref{eq:scaled-count-volume}. The approximation in
\eqref{eq:finite-cycle-integral-limit} changes the resulting leading term
by $O_{k,\mathcal K}(N^{2k-2})$. We obtain
\begin{equation}
\label{eq:uniform-block-cycle-asymptotic}
 \#\mathcal C_N(\pi,\sigma;\lfloor cN\rfloor)
 =
 N^{2k-1}\mathcal J_{\pi,\sigma}(c;1)
 +O_{k,\mathcal K}(N^{2k-2}),
\end{equation}
uniformly for $c\in\mathcal K$. Insert \eqref{eq:uniform-block-cycle-asymptotic} into
\eqref{eq:exact-admissible-block-decomposition} and divide by
$N^{2k-1}$. The set $\mathfrak B_k$ is finite, with size depending only
on $k$, so summing the error terms gives, uniformly for
$c\in\mathcal K$,
\begin{equation*}
\frac{I_k(\lfloor cN\rfloor;N)}{N^{2k-1}}
=
\sum_{(\pi,\sigma)\in\mathfrak B_k}
\operatorname{sgn}(\sigma)\,
\mathcal J_{\pi,\sigma}(c;1)
+
O_{k,\mathcal K}(N^{-1}).
\end{equation*}
Therefore, by \eqref{eq:rmt-delta-final}, we have
\begin{equation}
\label{eq:delta-admissible-sum}
\delta_k(c)
=
\sum_{(\pi,\sigma)\in\mathfrak B_k}
\operatorname{sgn}(\sigma)\,
\mathcal J_{\pi,\sigma}(c;1).
\end{equation}

It remains to express \eqref{eq:delta-admissible-sum} in the form stated in
\eqref{eq:delta-cycle-final}. By \eqref{eq:J-pi-sigma-definition}, each
summand is the integral associated with $(\pi,\sigma)$. We may add the pairs
in $\mathfrak J_k\setminus\mathfrak B_k$ without changing the value. For
any such pair, the union of the blocks in some cycle $\mathcal C$ contains
labels of only one sign. Its total displacement
$D_{\mathcal C,|\mathcal C|}(\mathbf u,\mathbf v)$ is either a non-empty sum
of positive $u_i$ or the negative of a non-empty sum of positive $v_j$.
The balance condition has no solution in $\mathbb R_{>0}^{2k}$, so the
integral associated with the pair is zero. Adding these pairs extends the
sum to $\mathfrak J_k$ and gives \eqref{eq:delta-cycle-final}.
As the interval $\mathcal K$ was arbitrary, the identity holds for every
$c>0$ and all error estimates are uniform on compact subsets of
$(0,\infty)$.
\end{proof}

\section{Finite-spline and Lauricella structure}
\label{section:finite-spline-structure}

The formulas in Theorem~\ref{thm:lauricella} are obtained by resolving the
volume integral of Theorem~\ref{thm:leading-volume} into its monomial
contributions. We first determine the coefficients
$p_{\alpha,\beta}^{(k,r)}$ in the expansion \eqref{eq:p-definition}. For $\alpha\in\mathbb Z_{\geq0}^r$, write
$|\alpha|=\sum_{i=1}^r \alpha_i$ and
$\mathbf x^\alpha= \prod_{i=1}^{r}x_i^{\alpha_i}$. Recall
\begin{equation}\label{eq:p-definition2}
 \Phi_{k,r}(\mathbf x,\mathbf y)^2
 =
 \sum_{\alpha,\beta\in\mathbb Z_{\geq0}^r}
 p_{\alpha,\beta}^{(k,r)}
 \mathbf x^\alpha\mathbf y^\beta.
\end{equation}
For integers $k\geq1$ and $1\leq r\leq k$, set
\begin{equation}
\label{eq:p-support-set}
\mathcal A_{k,r}
:=
\left\{
(\alpha,\beta)\in\mathbb Z_{\geq0}^r\times\mathbb Z_{\geq0}^r:
|\alpha|+|\beta|=2(k-r)
\right\}.
\end{equation}
The proof below shows that $\Phi_{k,r}^2$ is homogeneous of degree
$2(k-r)$, and hence that $p_{\alpha,\beta}^{(k,r)}=0$ whenever
$(\alpha,\beta)\notin\mathcal A_{k,r}$. For $u,v\in\mathbb Z_{\geq0}^r$, let
\begin{equation}\label{eq:Dr-definition}
 D_r(u,v)
 :=
 \det_{1\leq i,j\leq r}
 \left[\frac{1}{u_i+v_j+1}\right].
\end{equation}
With the standard abbreviations
$\alpha!=\prod_i\alpha_i!$ and
$\binom{\alpha}{u}=\prod_i\binom{\alpha_i}{u_i}$, where
$u\leq\alpha$ is understood componentwise, the coefficients are as follows.

\begin{proposition}\label{prop:p-explicit}
Let $k\geq 1$ and $1 \leq r \leq k$ be integers. If $(\alpha,\beta)\in\mathcal A_{k,r}$, then
\begin{equation}\label{eq:p-explicit}
 p_{\alpha,\beta}^{(k,r)}
 =
 \frac{(-1)^{|\beta|}(k!)^2}{(\alpha!\beta!)^2}
 \sum_{\substack{0\leq u\leq\alpha,\;0\leq v\leq\beta\\
                 |u|+|v|=k-r}}
 \binom{\alpha}{u}^{\!2}
 \binom{\beta}{v}^{\!2}
 D_r(u,v)D_r(\alpha-u,\beta-v),
\end{equation}
with $\mathcal A_{k,r}$ as defined in \eqref{eq:p-support-set} and $D_r(u,v)$ in \eqref{eq:Dr-definition}.
\end{proposition}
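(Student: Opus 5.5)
The plan is to obtain a closed Cauchy-determinant expansion of $\Phi_{k,r}$ itself and then square it. The squaring step is mechanical; the real work is the first step.

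\emph{Step 1: rewrite the hook kernel.} Starting from \eqref{eq:Qm-leading}, put $a=m-1-j$ and $b=j$. Then
\[
\frac{1}{m!(m-1)!}\binom{m-1}{j}^{2}=\frac{1}{m\,(a!)^2(b!)^2}=\frac{1}{(a!\,b!)^2\,(a+b+1)}.
\]
Hence
\[
\Theta_m(x,y)=\sum_{a+b=m-1}\frac{(-1)^b x^a y^b}{(a!\,b!)^2}\cdot\frac{1}{a+b+1}.
\]
So $\Theta_m$ is the degree-$(m-1)$ part of the bilinear form $\sum_{a,b\ge0}X(a;x)\,\frac{1}{a+b+1}\,Y(b;y)$, where
\[
X(a;x)=\frac{x^a}{(a!)^2},\qquad Y(b;y)=\frac{(-1)^b y^b}{(b!)^2}.
\]

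\emph{Step 2: remove the composition sum.} Row $i$ of $[\Theta_{m_i}(x_i,y_j)]$ is homogeneous of degree $m_i-1$ in the variables $x_i,\mathbf y$. Summing over compositions $m_1+\cdots+m_r=k$ therefore picks out exactly the total-degree $(k-r)$ part of
\[
\det_{i,j}\Bigl[\sum_{a,b\ge0}X(a;x_i)\,\tfrac{1}{a+b+1}\,Y(b;y_j)\Bigr].
\]
To make this rigorous, introduce a grading variable $t$, replace $x$ and $y$ by $tx$ and $ty$, and extract the coefficient of $t^{k-r}$. Next expand the determinant by multilinearity, in the rows (choosing $u_i$ for row $i$) and in the columns (choosing $v_j$ for column $j$). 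This is a Cauchy--Binet type expansion and gives
\[
\Phi_{k,r}(\mathbf x,\mathbf y)=k!\sum_{\substack{u,v\in\mathbb Z_{\ge0}^r\\ |u|+|v|=k-r}}\frac{(-1)^{|v|}\,\mathbf x^u\mathbf y^v}{(u!\,v!)^2}\,D_r(u,v).
\]
This shows at once that $\Phi_{k,r}$ is homogeneous of degree $k-r$. Consequently $\Phi_{k,r}^2$ is homogeneous of degree $2(k-r)$, and $p^{(k,r)}_{\alpha,\beta}=0$ whenever $(\alpha,\beta)\notin\mathcal A_{k,r}$.

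\emph{Step 3: square and collect.} Multiply two copies of the expansion with indices $(u,v)$ and $(u',v')$. The coefficient of $\mathbf x^\alpha\mathbf y^\beta$ comes from pairs with $u'=\alpha-u$ and $v'=\beta-v$. The signs combine to $(-1)^{|v|+|v'|}=(-1)^{|\beta|}$. For the factorials, use
\[
\frac{1}{u!\,(\alpha-u)!}=\frac{1}{\alpha!}\binom{\alpha}{u},
\]
and its analogue for $v$. Finally, the constraints $|u|+|v|=k-r$ and $|u'|+|v'|=k-r$ reduce to the single condition $|u|+|v|=k-r$, because $|\alpha|+|\beta|=2(k-r)$. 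This yields \eqref{eq:p-explicit}.

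The main obstacle is Step 2. One must check carefully that expanding in rows and columns simultaneously is legitimate even though the entry $(i,j)$ depends jointly on $(a,b)$. It is: each entry is $\sum_{a,b}X(a;x_i)\,M_{ab}\,Y(b;y_j)$ with $M_{ab}=1/(a+b+1)$. Multilinearity in the rows and columns then factors out $\prod_i X(u_i;x_i)\prod_j Y(v_j;y_j)$ and leaves $\det[M_{u_iv_j}]=D_r(u,v)$. One must also check that restricting the composition sum to positive parts $m_i\ge1$ matches all degree splittings. It does, since each row degree $m_i-1\ge0$ can be arbitrary, subject only to the total being $k-r$.
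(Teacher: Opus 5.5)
Your proposal is correct and follows the paper's proof. It uses the same monomial expansion of $\Theta_m$ with the factor $1/(a+b+1)$, the same Cauchy-determinant expansion of $\Phi_{k,r}$, and the same squaring step; the only difference is cosmetic, in that you absorb the composition sum with a grading variable and row/column multilinearity, while the paper applies Leibniz's formula and reads off $m_i=u_i+v_{\sigma(i)}+1$ for each permutation $\sigma$.
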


First, we will prove the above proposition and then proceed to prove the finite-spline form for $\delta_k(c)$ in Theorem~\ref{thm:lauricella}.

\begin{proof}[Proof of Proposition~\ref{prop:p-explicit}]
We begin with the binomial form of the Legendre polynomial,
\begin{equation}\label{eq:Legendre-binomial-form}
 P_n(z)
 =
 2^{-n}\sum_{a=0}^n
 \binom{n}{a}^{\!2}
 (z-1)^{n-a}(z+1)^a.
\end{equation}
Substituting $z=(x-y)/(x+y)$ into the above and multiplying by $(x+y)^n$ gives the
polynomial identity
\begin{equation}\label{eq:Legendre-homogeneous-expansion}
 (x+y)^nP_n\!\left(\frac{x-y}{x+y}\right)
 =
 \sum_{\substack{a,b\geq0\\a+b=n}}
 (-1)^b\binom{n}{a}^{\!2}x^ay^b.
\end{equation}
Here the latter is not a double sum over independent indices: the constraint $a+b=n$ uniquely determines $b$, so this remains a single sum with $n+1$ terms.
Using \eqref{eq:Theta-legendre-def}, we obtain
\begin{equation}\label{eq:Psi-monomial-expansion}
 \Theta_m(x,y)
 =
 \sum_{\substack{a,b\geq0\\a+b=m-1}}
 \frac{(-1)^b}{(a!b!)^2(a+b+1)}x^ay^b.
\end{equation}
In particular, $\Theta_m$ is a homogeneous polynomial of degree $m-1$, i.e. every monomial $x^ay^b$ appearing in it has total degree $m-1$. For every choice of positive integers $m_1,\ldots,m_r$ with $m_1+\cdots+m_r=k$, each term in
$\det[\Theta_{m_i}(x_i,y_j)]$ selects exactly one entry from each row.
The entry selected from row $i$ has total degree $m_i-1$, so every term in the determinant has total degree
\begin{equation*}
\sum_{i=1}^r(m_i-1)=k-r.
\end{equation*}
Therefore $\Phi_{k,r}$ is homogeneous of degree $k-r$, and $\Phi_{k,r}^2$ is homogeneous of degree $2(k-r)$.

We now expand the determinant in \eqref{eq:Phi-def} using Leibniz's formula and insert \eqref{eq:Psi-monomial-expansion}. For a fixed permutation $\sigma\in S_r$, the associated product in the expansion is
\begin{equation*}
 \prod_{i=1}^r \Theta_{m_i}(x_i,y_{\sigma(i)}).
\end{equation*}
We reindex the exponents by writing $u_i=a_i$ and $b_i=v_{\sigma(i)}$. Thus, for fixed $u,v\in\mathbb Z_{\geq0}^r$, the $i$th factor $\Theta_{m_i}(x_i,y_{\sigma(i)})$ must contribute the monomial $x_i^{u_i}y_{\sigma(i)}^{v_{\sigma(i)}}$. By \eqref{eq:Psi-monomial-expansion}, its coefficient is
\begin{equation*}
 \frac{(-1)^{v_{\sigma(i)}}}
 {(u_i!v_{\sigma(i)}!)^2
 \bigl(u_i+v_{\sigma(i)}+1\bigr)},
\end{equation*}
and the same expansion requires the exponents in this factor to satisfy $u_i+v_{\sigma(i)}=m_i-1$, and hence $m_i=u_i+v_{\sigma(i)}+1$. Summing over $i$ gives
\begin{equation*}
 \sum_{i=1}^r m_i
 =
 \sum_{i=1}^r u_i
 +
 \sum_{i=1}^r v_{\sigma(i)}
 +
 r
 =
 |u|+|v|+r,
\end{equation*}
and multiplying the row coefficients gives
\begin{align*}
 \prod_{i=1}^r
 \frac{(-1)^{v_{\sigma(i)}}}
 {(u_i!v_{\sigma(i)}!)^2
 \bigl(u_i+v_{\sigma(i)}+1\bigr)}
 &=
 \frac{(-1)^{|v|}}{(u!v!)^2}
 \prod_{i=1}^r
 \frac{1}{u_i+v_{\sigma(i)}+1},
\end{align*}
because $\sum_i v_{\sigma(i)}=|v|$ and
$\prod_i v_{\sigma(i)}!=v!$. Summing these contributions over
$\sigma\in S_r$, with the $\operatorname{sgn}(\sigma)$ from
Leibniz's formula, yields
\begin{align}
 \Phi_{k,r}(\mathbf x,\mathbf y)
 &={}
 k!\!\sum_{\substack{u,v\in\mathbb Z_{\geq0}^r\\
                      |u|+|v|=k-r}}
 \frac{(-1)^{|v|}}{(u!v!)^2}
 \left(
  \sum_{\sigma\in S_r}
  \operatorname{sgn}(\sigma)
  \prod_{i=1}^r\frac{1}{u_i+v_{\sigma(i)}+1}
 \right)
 \mathbf x^u\mathbf y^v
 \notag\\
 &={}
 k!\!\sum_{\substack{u,v\in\mathbb Z_{\geq0}^r\\
                      |u|+|v|=k-r}}
 \frac{(-1)^{|v|}D_r(u,v)}{(u!v!)^2}
 \mathbf x^u\mathbf y^v.
 \label{eq:Phi-explicit-monomial}
\end{align}
The inner sum in the first line is exactly the determinant
in \eqref{eq:Dr-definition}.

It remains to square \eqref{eq:Phi-explicit-monomial}.  Suppose that
$(\alpha,\beta)\in\mathcal A_{k,r}$.  If one factor contributes the
exponents $(u,v)$, the other must contribute
$(\alpha-u,\beta-v)$.  The first factor has total degree $k-r$ exactly
when $|u|+|v|=k-r$; because
$|\alpha|+|\beta|=2(k-r)$, the complementary pair then has the same total
degree.  Hence
\begin{align*}
 p_{\alpha,\beta}^{(k,r)}
 ={}&(k!)^2
 \sum_{\substack{0\leq u\leq\alpha,\;0\leq v\leq\beta\\
                  |u|+|v|=k-r}}
 \frac{(-1)^{|v|+|\beta-v|}
       D_r(u,v)D_r(\alpha-u,\beta-v)}
 {(u!v!)^2((\alpha-u)!(\beta-v)!)^2}.
\end{align*}
The sign becomes $(-1)^{|\beta|}$, and the componentwise factorial identities
give
\begin{equation*}
 \frac{1}{(u!(\alpha-u)!)^2}
 =
 \frac{1}{(\alpha!)^2}\binom{\alpha}{u}^{\!2},
 \qquad
 \frac{1}{(v!(\beta-v)!)^2}
 =
 \frac{1}{(\beta!)^2}\binom{\beta}{v}^{\!2}.
\end{equation*}
Substitution gives \eqref{eq:p-explicit}.  If
$(\alpha,\beta)\notin\mathcal A_{k,r}$, the coefficient is zero by the
homogeneity proved above.
\end{proof}

\begin{proof}[Proof of Theorem~\ref{thm:lauricella}]
We begin by substituting the expansion \eqref{eq:p-definition2} into \eqref{eq:delta-def}. As this expansion is finite, we may integrate term by term, so the evaluation of $\delta_k(c)$ reduces to computing the contribution from each term $\mathbf x^\alpha\mathbf y^\beta$. For $(\alpha,\beta)\in\mathcal A_{k,r}$, set
\begin{equation}\label{eq:monomial-delta-integral}
 \mathcal I_{\alpha,\beta}^{(r)}(c)
 :=
 \int_{\mathbb R_{\geq0}^r\times[0,1]^r}
 \mathbf x^\alpha\mathbf y^\beta
 \delta \hspace{0.02in} \biggl(\sum_{i=1}^r(x_i+y_i)-c\biggr)
 \,d\mathbf x\,d\mathbf y.
\end{equation}
Using Theorem~\ref{thm:leading-volume}, we obtain
\begin{equation}\label{eq:delta-monomial-integral}
 \delta_k(c)
 =
 \sum_{r=1}^{\lfloor\sqrt{k}\rfloor}\frac{1}{(r!)^2}
 \sum_{(\alpha,\beta)\in\mathcal A_{k,r}}
 p_{\alpha,\beta}^{(k,r)}
 \mathcal I_{\alpha,\beta}^{(r)}(c).
\end{equation}
We use the convention $t_+^0:=\mathds{1}_{\{t>0\}}$.
For $t\neq0$, the Dirichlet simplex identity gives
\begin{equation}\label{eq:Dirichlet-simplex-integral}
 \int_{\mathbb R_{\geq0}^r}
 \mathbf x^\alpha \,
 \delta \hspace{0.01in} \biggl(\sum_{i=1}^r x_i-t\biggr)d\mathbf x
 =
 \frac{\alpha!}{(|\alpha|+r-1)!}
 t_+^{|\alpha|+r-1}.
\end{equation}
The value at $t=0$ does not affect the subsequent integrals. With $ d=d_{\alpha,r}=|\alpha|+r-1$ and $t=c-\sum_i y_i$, the above gives
\begin{equation}\label{eq:monomial-cube-reduction}
 \mathcal I_{\alpha,\beta}^{(r)}(c)
 =
 \frac{\alpha!}{d!}\,I_{\beta,d}(c),
 \qquad
 I_{\beta,d}(c)
 :=
 \int_{[0,1]^r}
 \mathbf y^\beta
 \biggl(c-\sum_{i=1}^r y_i\biggr)_+^d d\mathbf y.
\end{equation}
We first evaluate $I_{\beta,d}(c)$ for arbitrary $c>0$. The integral is
over the cube $[0,1]^r$, so in addition to $y_i\geq0$ we must impose the
upper bounds $y_i\leq1$. We do this by an inclusion--exclusion mechanism. 
Let $[r] := \{1,2,\ldots, r\}$ and
\begin{equation*}
 f(\mathbf y)
 :=
 \mathbf y^\beta
 \biggl(c-\sum_{i=1}^r y_i\biggr)_+^d.
\end{equation*}
On the region $\mathbf y\in\mathbb R_{\geq0}^r$, the condition
$0\leq y_i<1$ has indicator $1-\mathds{1}_{\{y_i\geq1\}}$. Hence the indicator of the cube $[0,1)^r$ is
\begin{equation*}
 \prod_{i=1}^r\left(1-\mathds{1}_{\{y_i\geq1\}}\right).
\end{equation*}
Expanding this product amounts to choosing, for each subset
$S\subseteq[r]$, the factor $-\mathds{1}_{\{y_i\geq1\}}$ for every
$i\in S$ and the factor $1$ for every $i\notin S$. Therefore
\begin{equation*}
 \prod_{i=1}^r\left(1-\mathds{1}_{\{y_i\geq1\}}\right)
 =
 \sum_{S\subseteq[r]}(-1)^{|S|}
 \prod_{i\in S}\mathds{1}_{\{y_i\geq1\}}.
\end{equation*}
Multiplying by $f(\mathbf y)$ and integrating over
$\mathbb R_{\geq0}^r$ gives
\begin{align}
 I_{\beta,d}(c)
 ={}&
 \sum_{S\subseteq[r]}(-1)^{|S|}
 \int_{\substack{\mathbf y\in\mathbb R_{\geq0}^r\\
                  y_i\geq1\ (i\in S)}}
 \mathbf y^\beta
 \biggl(c-\sum_{i=1}^r y_i\biggr)_+^d
 \,d\mathbf y.
 \label{eq:cube-inclusion-exclusion}
\end{align}
Replacing
$[0,1)^r$ by $[0,1]^r$ does not change the integral, because their
difference lies on the boundary $y_i=1$, which has measure zero.

We now evaluate the integral corresponding to each subset $S\subseteq[r]$. For $i\in S$, the condition $y_i\geq1$ gives a lower bound of $1$, while the simplex formula is most naturally applied when all variables start at $0$. We therefore set $y_i=1+z_i$ for $i\in S$ and $y_i=z_i$ for $i\notin S$. This change of variables maps the region of integration onto
$\mathbb R_{\geq0}^r$ and has Jacobian $1$. It also gives
\begin{equation*}
 c-\sum_{i=1}^r y_i
 =
 c-|S|-\sum_{i=1}^r z_i, \qquad \text{and} \qquad  \mathbf y^\beta
 =
 \prod_{i\in S}(1+z_i)^{\beta_i}
 \prod_{i\notin S}z_i^{\beta_i}.
\end{equation*}
To reduce the shifted factors to ordinary powers of the $z_i$, we expand
\begin{equation*}
 (1+z_i)^{\beta_i}
 =
 \sum_{q_i=0}^{\beta_i}
 \binom{\beta_i}{q_i}z_i^{\beta_i-q_i},
 \qquad i\in S.
\end{equation*}
For each fixed choice of $q_i$, define
\begin{equation*}
 \gamma_i
 =
 \begin{cases}
  \beta_i-q_i, & i\in S,\\
  \beta_i, & i\notin S.
 \end{cases}
\end{equation*}
Then $|\gamma|=|\beta|-\sum_{i\in S}q_i$ and the corresponding integral has the form
\begin{equation*}
 \int_{\mathbb R_{\geq0}^r}
 \mathbf z^\gamma
 \Bigl(c-|S|-\sum_{i=1}^r z_i\Bigr)_+^d
 \,d\mathbf z.
\end{equation*}
We evaluate these integrals using the following consequence of the simplex
integral formula: for $\gamma\in\mathbb Z_{\geq0}^r$,
$d\in\mathbb Z_{\geq0}$, and $\rho\in\mathbb R$,
\begin{equation}
\label{eq:augmented-Dirichlet-integral}
 \int_{\mathbb R_{\geq0}^r}
 \mathbf z^\gamma
 \left(\rho-\sum_{i=1}^r z_i\right)_+^d
 \,d\mathbf z
 =
 \frac{\gamma!\,d!}{(d+|\gamma|+r)!}
 \rho_+^{d+|\gamma|+r}.
\end{equation}
To verify this identity, introduce one additional variable
$w=\rho-\sum_{i=1}^r z_i$. The integral may then be written as
\begin{equation*}
 \int_{\mathbb R_{\geq0}^{r+1}}
 \mathbf z^\gamma w^d
 \delta\!\left(\sum_{i=1}^r z_i+w-\rho\right)
 \,d\mathbf z\,dw,
\end{equation*}
and \eqref{eq:augmented-Dirichlet-integral} follows from
\eqref{eq:Dirichlet-simplex-integral} in dimension $r+1$.

Applying \eqref{eq:augmented-Dirichlet-integral} with $\rho=c-|S|$ to each
term in the binomial expansion gives
\begin{align*}
 I_{\beta,d}(c)
 ={}&
 \sum_{S\subseteq[r]}(-1)^{|S|}
 \sum_{\substack{0\leq q_i\leq\beta_i\\ i\in S}}
 \left(\prod_{i\in S}\binom{\beta_i}{q_i}\right)
 \frac{\gamma!\,d!}
 {\left(d+|\gamma|+r\right)!}
 (c-|S|)_+^{d+|\gamma|+r}.
\end{align*}
The factorials simplify using
\begin{equation*}
 \left(\prod_{i\in S}\binom{\beta_i}{q_i}\right)\gamma!
 =
 \frac{\beta!}{\displaystyle\prod_{i\in S}q_i!}.
\end{equation*}
Setting $M:=d+|\beta|+r$ and using $d+|\gamma|+r
 = M-\sum_{i\in S}q_i$, we obtain
\begin{equation}
\label{eq:cube-moment-spline}
 I_{\beta,d}(c)
 =
 \beta!d!
 \sum_{S\subseteq[r]}(-1)^{|S|}
 \sum_{\substack{0\leq q_i\leq\beta_i\\ i\in S}}
 \frac{(c-|S|)_+^{M-\sum_{i\in S}q_i}}
 {\left(M-\sum_{i\in S}q_i\right)!
 \displaystyle\prod_{i\in S}q_i!}.
\end{equation}
For $(\alpha,\beta)\in\mathcal A_{k,r}$, the support relation
$|\alpha|+|\beta|=2(k-r)$ gives
\begin{equation}
\label{eq:M-equals-2k-minus-1}
 M
 =
 d_{\alpha,r}+|\beta|+r
 =
 |\alpha|+|\beta|+2r-1
 =
 2k-1.
\end{equation}
Substituting \eqref{eq:cube-moment-spline} into
\eqref{eq:monomial-cube-reduction}, and then using
\eqref{eq:delta-monomial-integral}, gives the finite-spline expansion stated
in the theorem.

We now suppose that $c\geq\lfloor\sqrt{k}\rfloor$. Every rank in
\eqref{eq:delta-monomial-integral} satisfies $r\leq\lfloor\sqrt{k}\rfloor$, and therefore $c\geq r$. Moreover, for $\mathbf y\in[0,1]^r$, we have $\sum_{i=1}^r y_i\leq r\leq c$. Hence $c-\sum_i y_i\geq0$ throughout the cube, so the positive-part truncation in
\eqref{eq:monomial-cube-reduction} may be omitted (the possible equality at
the boundary does not affect the integral). As $d$ is a non-negative
integer, the multinomial theorem gives the finite expansion
\begin{equation}\label{eq:multinomial-FA-expansion}
 \biggl(c-\sum_{i=1}^r y_i\biggr)^d
 =
 c^d
 \sum_{m\in\mathbb Z_{\geq0}^r, \, |m|\leq d}
 \frac{(-d)_{|m|}}{m!}
 \prod_{i=1}^r\left(\frac{y_i}{c}\right)^{m_i}.
\end{equation}
The sum is finite, so it may be integrated term by term.  For every $i$,
\begin{equation}\label{eq:Pochhammer-ratio}
 \int_0^1y_i^{\beta_i+m_i}\,dy_i
 =
 \frac{1}{\beta_i+1}
 \frac{(\beta_i+1)_{m_i}}{(\beta_i+2)_{m_i}}.
\end{equation}
Combining these last two results, and comparing the resulting finite
series with the definition in \eqref{eq:FA-definition}, we obtain
\begin{equation}\label{eq:cube-moment-FA}
 I_{\beta,d}(c)
 =
 \frac{c^d}{\displaystyle\prod_{i=1}^r(\beta_i+1)}
 F_A^{(r)}
 \left(
  -d;
  \beta+\mathbf1_r;
  \beta+2\mathbf1_r;
  c^{-1}\mathbf1_r
 \right).
\end{equation}
Substituting \eqref{eq:cube-moment-FA} into
\eqref{eq:monomial-cube-reduction}, and then into
\eqref{eq:delta-monomial-integral}, proves
\eqref{eq:delta-Lauricella}. Since $(-d)_{|m|}=0$ for $|m|>d$, every
Lauricella series here terminates. Thus the argument requires neither a
convergence condition nor analytic continuation, and it remains valid at
$c=\lfloor\sqrt{k}\rfloor$.

For completeness, the same formulas justify the two structural observations
following the theorem. Each term in \eqref{eq:delta-Lauricella} is a
polynomial in $c$ of degree at most $d_{\alpha,r}$, and
\begin{equation*}
 d_{\alpha,r}
 =|\alpha|+r-1
 \leq2(k-r)+r-1
 \leq2k-2.
\end{equation*}
At a candidate breakpoint $j$, only the terms with $|S|=j$ in
\eqref{eq:cube-moment-spline} change from zero to non-zero. Their truncated
powers have exponent
\begin{equation*}
 2k-1-\sum_{i\in S}q_i
 \geq2k-1-|\beta|
 \geq2r-1
 \geq2j-1.
\end{equation*}
Consequently each such term is $C^{2j-2}$ at $c=j$, and so is
$\delta_k(c)$. 
\end{proof}

\begin{remark}
\label{rem:FB-spline}
The Lauricella family $F_B$ is distinct from the family $F_A$ used in \eqref{eq:delta-Lauricella}. For $s\geq1$,
$\mathbf a,\mathbf b,\mathbf z\in\mathbb C^s$, 
$\gamma\notin\{0,-1,-2,\ldots\}$, and $|z_i|<1$ for every $i$, define
\begin{equation*}
 F_B^{(s)}(\mathbf a;\mathbf b;\gamma;\mathbf z)
 =
 \sum_{m_1,\ldots,m_s\geq0}
 \frac{1}{(\gamma)_{|m|}}
 \prod_{i=1}^s
 \frac{(a_i)_{m_i}(b_i)_{m_i}}{m_i!}z_i^{m_i},
 \qquad |m|=m_1+\cdots+m_s.
\end{equation*}
For $S\subseteq[r]$, put $B_S=\sum_{i\in S}\beta_i,$
and $ t_S=(c-|S|)_+$. Write $\beta_S=(\beta_i)_{i\in S}$ and $\mathbf1_S=(1)_{i\in S}$.
Then the innermost sum in the finite-spline formula of Theorem~\ref{thm:lauricella} may be replaced exactly by
\begin{align*}
 &\sum_{\substack{0\leq q_i\leq\beta_i\\ i\in S}}
 \hspace{-0.02in} \frac{t_S^{\,2k-1-\sum_{i\in S}q_i}}
 {\left(2k-1-\sum_{i\in S}q_i\right)!
  \displaystyle\prod_{i\in S}q_i!}
 \hspace{-0.02in} = \hspace{-0.02in}
 \frac{t_S^{\,2k-1-B_S}}
 {(2k-1-B_S)!\displaystyle\prod_{i\in S}\beta_i!}
 F_B^{(|S|)}
 \left(
 -\beta_S;\mathbf1_S;
 2k-B_S;
 -t_S\mathbf1_S
 \right).
\end{align*}
Since every component of $-\beta_S$ is a non-positive integer, this $F_B^{(|S|)}$ series terminates and hence introduces no convergence restriction.
For $S=\varnothing$, both sides are interpreted as $c^{2k-1}/(2k-1)!$, while on the final polynomial range the resulting alternating sum of terminating $F_B$ polynomials is equal to the $F_A$ expression in \eqref{eq:delta-Lauricella}.
\end{remark}

\subsubsection{Related literature} Within random matrix theory, Lauricella functions may be viewed as scalar
specialisations of certain matrix-variate hypergeometric functions \cite{kn:mathai93, kn:st18}; for special parameter choices, they can be related to broader hypergeometric theory associated with $\beta$-ensembles \cite{kn:st18}. These matrix-argument functions, developed by Herz~\cite{kn:h55} amongst others, generalise ${}_pF_q$ by replacing powers of a scalar variable with Jack polynomials of the eigenvalues of a matrix argument. Related hypergeometric functions of
matrix argument have appeared in the study of characteristic polynomials \cite{kn:deh10, kn:ssd23}.

\section{Cumulative Trace Averages and block Hankel Determinants}
\label{section:block-Hankel-determinant}

In this section, we relate $\delta_k(c)$ to a two-weight multiple Hankel determinant by proving Theorem~\ref{thm:two-centre}. The central external input is a contour integral
formula for the cumulative leading coefficient $M_k(\alpha)$ \cite{kn:bcis26}, stated below in Theorem~\ref{thm:two-centre-P-input}. This formula ensures that the limit defining $M_k(\alpha)$ exists, and Proposition~\ref{prop:derivative-relationship} then identifies $M_k'(\alpha)$ with $\delta_k(\alpha)$.

Fix $\tau\in(0,\tfrac12)$, and let $C_\zeta$ denote a positively oriented
circle of radius $\tau$ centred at $\zeta$.  For $0\leq r\leq k$, define
\begin{equation}\label{eq:two-centre-contours}
 c_j(r)=
 \begin{cases}
  C_1,&1\leq j\leq r,\\
  C_0,&r<j\leq k,
 \end{cases}
 \qquad
 \hat c_\ell(r)=
 \begin{cases}
  C_{-1},&1\leq \ell\leq r,\\
  C_0,&r<\ell\leq k.
 \end{cases}
\end{equation}
Thus the index $r$ specifies how many of the $x$-contours are centred at
$1$, and simultaneously how many of the $y$-contours are centred at $-1$.
We write $T(\mathbf x)=\sum_{j=1}^k x_j$ and use the abbreviations
$\int_{\mathbf c(r)}:=\int_{c_1(r)}\!\cdots\!\int_{c_k(r)}$ and
$\int_{\hat{\mathbf c}(r)}:=\int_{\hat c_1(r)}\!\cdots\!\int_{\hat c_k(r)}$ to denote the two sets of $k$-fold contour integrals.

\begin{theorem}
\label{thm:two-centre-P-input}
Let $k\in\mathbb N$. For $\alpha>0$,
\begin{equation}\label{eq:two-centre-M-P}
 M_k(\alpha)=\frac1{k!^2}\sum_{0\leq r<\alpha}\binom{k}{r}^{\!2}P_{r,k}(\alpha),
\end{equation}
where, for $0\leq r\leq k$,
\begin{align}
 P_{r,k}(\alpha)
 ={}&\frac1{(2\pi i)^{2k}}
 \sum_{a=0}^k\frac{(-1)^{k-a}}{(k+a)!}\binom{k}{a}
 \int_{\mathbf c(r)}\int_{\hat{\mathbf c}(r)}
 \bigl(T(\mathbf x)-\alpha\bigr)^{k+a} \, \Delta(\mathbf x)^2\Delta(\mathbf y)^2
 \prod_{j,\ell=1}^k(x_j-y_\ell-1)
 \notag\\[-1mm]
 &\times
 \left(
   \sum_{j=1}^k\frac1{x_j}
   +\sum_{\ell=1}^k\frac1{y_\ell+1}
 \right)^{\!a}
 \left(
   \sum_{j=1}^k\frac1{x_j-1}
   +\sum_{\ell=1}^k\frac1{y_\ell}
 \right)^{\!k}
 \prod_{j=1}^k x_j^{-k}
 \prod_{\ell=1}^k y_\ell^{-k}
 \,d\mathbf x\,d\mathbf y,
 \label{eq:two-centre-P-original}
\end{align}
where $\Delta(\mathbf{x})=\prod_{i < j}(x_j - x_i)$ and we use
$d\mathbf x=\prod_{j=1}^k dx_j$, $d\mathbf y=\prod_{\ell=1}^k dy_\ell$.
For $r>k$, set $P_{r,k}(\alpha)=0$.  Moreover, $P_{r,k}(\alpha)$ has a zero of
order at least $2r^2$ at $\alpha=r$.
\end{theorem}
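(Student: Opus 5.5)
The plan is to start from the generating-function form of the trace sums and use the ratios theorem for $U(N)$, in the spirit of Keating et al.\ \cite{kr3} and Basor--Ge--Rubinstein \cite{gammakc}. For $|z|<1$ we have $\sum_{j\geq1}\operatorname{Tr}(U^j)z^j=-z\,\frac{d}{dz}\log\det(I-zU)$. The degree-$\leq n$ part of $\bigl(\sum_j\operatorname{Tr}(U^j)z^j\bigr)^k$ is obtained by a Cauchy coefficient extraction against $(1-z)^{-1}z^{-n-1}$. By \eqref{eq:cumulative-exact-degree-decomposition}, $\tilde I_k(n;N)$ is therefore a double contour integral in $(z,w)$ of
\begin{equation*}
\int_{U(N)}\Bigl(z\tfrac{d}{dz}\log\det(I-zU)\Bigr)^k\Bigl(w\tfrac{d}{dw}\log\det(I-w\overline U)\Bigr)^k dU .
\end{equation*}
I would write this product of $2k$ logarithmic derivatives as $\partial_{\boldsymbol\alpha}\partial_{\boldsymbol\gamma}$-derivatives, at coincident shifts, of an average of $k+k$ ratios of characteristic polynomials. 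That average is then evaluated exactly by the Conrey--Farmer--Zirnbauer ratios theorem (valid for $N\geq k$). The result is a finite sum over swap sets; the exact version can equivalently be written, via the Bump--Gamburd/CFKRS device, as a $2k$-fold contour integral with Vandermonde factors $\Delta(\mathbf x)^2\Delta(\mathbf y)^2$ and cross factors $\prod_{j,\ell}(x_j-y_\ell-1)$ after a rescaling of the shifts by $N$.

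The second step is the scaling. I would put the shift variables at scale $1/N$, so that $e^{-N\cdot(\text{shift})}$ factors become the exponentials that produce, after the $z$-coefficient extraction at $n=\lfloor\alpha N\rfloor$, the factor $\bigl(T(\mathbf x)-\alpha\bigr)_+^{k+a}/(k+a)!$; the $z$-integral is effectively an inverse Laplace transform. The binomial sum over $a$ with the power $\bigl(\sum 1/x_j+\sum 1/(y_\ell+1)\bigr)^a$ comes from distributing the $k$ derivatives in $u$, equivalently the factor $(1+\partial_u/s)^k$ anticipated in Theorem~\ref{thm:two-centre}, between the exponential and the $(1-z)^{-1}$ weight. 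The remaining $k$-th power $\bigl(\sum 1/(x_j-1)+\sum 1/y_\ell\bigr)^k$ comes from the $w$-side derivatives.

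Third, I would organise the swap sum by the number $r$ of swapped pairs. By symmetry there are $\binom{k}{r}^2$ equal choices, which explains the weight. A swap translates the corresponding $x$-contours from $C_0$ to $C_1$ and the $y$-contours from $C_0$ to $C_{-1}$, producing $\mathbf c(r),\hat{\mathbf c}(r)$. On these contours $T(\mathbf x)$ is near $r$, and the associated oscillatory factor $e^{N(r-\alpha)\cdot}$ kills the term at leading order unless $r<\alpha$. This gives the truncation $0\leq r<\alpha$, and dividing by $N^{2k}$ isolates the leading homogeneous part. This also shows that the limit defining $M_k$ exists.

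The main obstacle I expect is twofold. The first part is bookkeeping the exact prefactors, including the $1/k!^2$, signs $(-1)^{k-a}$, and Jacobians from rescaling, so that the leading term matches $\tilde I_k$ exactly. I would check this against the stable range \eqref{eq:Ik-stable-range}, where only $r=0$ contributes. The second part is the vanishing of order $2r^2$ at $\alpha=r$. For that I would shift $x_j\mapsto 1+\xi_j$ for $j\leq r$ and $y_\ell\mapsto -1+\eta_\ell$ for $\ell\leq r$, so that $T(\mathbf x)-r$ is small. The Vandermonde squares, restricted to the $r$ clustered variables on each side, supply degree $2\binom r2$ in each of the $\xi$ and $\eta$ groups, and the cross factors $(x_j-y_\ell-1)$ among clustered pairs supply a further $r^2$ small factors. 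Counting the residue degree, which is what the polynomial $(T-\alpha)^{k+a}$ must absorb, should then show that $\partial_\alpha^m P_{r,k}(r)=0$ for $m<2r^2$.
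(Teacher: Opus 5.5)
The paper does not prove this statement. It imports it from \cite{kn:bcis26} and only re-uses its pole-counting mechanism, in the proof of Theorem~\ref{thm:two-centre}. Your route is the natural one for a formula of this shape: ratios theorem for $U(N)$, sorting by the number $r$ of swaps, $1/N$ scaling, then Laplace inversion in the degree. You also correctly anticipate the sector weights $\binom{k}{r}^2$ and the threshold $r<\alpha$. As written, however, the proposal has two genuine gaps.

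\textbf{The central identity is asserted, not derived.} The CFKRS symmetrisation lemma you invoke produces a single $\Delta(z_1,\ldots,z_{2k})^2$, with every contour around the coalescing shifts. It does not by itself give the split factor $\Delta(\mathbf x)^2\Delta(\mathbf y)^2\prod_{j,\ell}(x_j-y_\ell-1)$, which carries only one power of the cross product. Nor does it give contours whose centres are translated to $1$ and $-1$.

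In \eqref{eq:two-centre-P-original} the $x_j$ enter through $T(\mathbf x)-\alpha$. So they are degree-type variables dual to the scaled shift, and the swap factor $(zw)^{Nr}$ shows up as the translation of $r$ centres. Several things are missing:
\begin{itemize}
\item the conversion from rescaled shifts to these variables;
\item the origin of the factors $A^a$ and $B^k$;
\item the origin of the weights $(-1)^{k-a}\binom{k}{a}/(k+a)!$.
\end{itemize}
The operator $(1+\partial_u/s)^k$ of Theorem~\ref{thm:two-centre} cannot justify any of this, because that theorem is deduced from this one.

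Likewise, ``$\binom{k}{r}^2$ equal choices'' cannot be counted on the swap terms themselves. After differentiation, each term with $|S|=r$ still contains factors $(1-e^{-(\alpha_i-\alpha_j)})^{-1}$ with $i\notin S$, $j\in S$. These blow up when the shifts coalesce, so the symmetry count must be made on a representation that is regular at coincidence.

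\textbf{The vanishing-order count is wrong.} Set $x_j=1+\xi_j$ and $y_\ell=-1+\eta_\ell$ for $j,\ell\le r$. The clustered cross factors are then $x_j-y_\ell-1=1+\xi_j-\eta_\ell$, which are units, not small. The small cross factors are the $2r(k-r)$ mixed ones: $\xi_j-y_\ell$ for $j\le r<\ell$, and $x_j-\eta_\ell$ for $\ell\le r<j$.

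The unshifted variables cannot be dropped either. They enter $T(\mathbf x)-r=\sum_{j\le r}\xi_j+\sum_{j>r}x_j$, and they carry the poles $x_j^{-k}$ and $y_\ell^{-k}$.

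The correct count runs over all $2k$ variables. Put $h=k-r$:
\begin{itemize}
\item the zero order is $Z=2h(h-1)+2r(r-1)+2hr$;
\item the pole order is at most $2kh+a+k$;
\item a term $(\alpha-r)^{k+a-m}$ times a degree-$m$ polynomial in the local variables has zero residue unless $m\le 2kh+a+k-Z-2k=k+a-2r^2$.
\end{itemize}
This gives order at least $2r^2$. The $2hr$ mixed zeros exactly absorb the surplus $2hk-2h(h-1)-2h=2hr$ coming from the poles of the unshifted variables. This is the same count the paper performs for $Q_{r,a}$.

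Your count already fails for $k=r=1$, where the phantom zero of $x_1-y_1-1$ would give order $3$. In fact $\tilde I_1(n;N)=\sum_{j\le n}\min(j,N)$, so $M_1(\alpha)=\alpha^2/2$ for $\alpha\le 1$ and $M_1(\alpha)=\alpha-\tfrac{1}{2}$ for $\alpha\ge1$. Hence $P_{1,1}(\alpha)=-\tfrac{1}{2}(\alpha-1)^2$, which vanishes to order exactly $2$.
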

\begin{proposition}
\label{prop:derivative-relationship}
Let $k\geq1$ be a fixed integer. For $\alpha>0$,
\begin{equation}
\label{eq:Mk-derivative}
 M_k'(\alpha)=\delta_k(\alpha).
\end{equation}
\end{proposition}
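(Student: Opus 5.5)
The plan is to prove $M_k(\alpha)=\int_0^\alpha\delta_k(c)\,dc$ directly from the cumulative decomposition \eqref{eq:cumulative-exact-degree-decomposition}. Then use that $\delta_k$ is continuous on $(0,\infty)$, which follows from Theorem~\ref{thm:lauricella}, since each truncated power there has positive exponent $2k-1-\sum q_i\geq 2r-1\geq1$. The fundamental theorem of calculus then gives $M_k'(\alpha)=\delta_k(\alpha)$. Theorem~\ref{thm:two-centre-P-input} guarantees that the limit defining $M_k$ exists, but the argument below in fact produces the limit independently, so it only serves as a consistency check.

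Fix $\alpha>0$, set $n=\lfloor\alpha N\rfloor$, and fix a small $\varepsilon\in(0,\alpha)$. Split
\[
\tilde I_k(n;N)=\sum_{k\leq m\leq \varepsilon N}I_k(m;N)+\sum_{\varepsilon N<m\leq n}I_k(m;N).
\]

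For the first sum, once $\varepsilon<1$ we have $m\leq N$, so the stable-range evaluation \eqref{eq:Ik-stable-range} gives $I_k(m;N)=k!\binom{m+k-1}{2k-1}\ll_k m^{2k-1}$. The first sum is therefore $O_k(\varepsilon^{2k}N^{2k})$. This matches $\int_0^\varepsilon\delta_k=\tfrac{k!}{(2k)!}\varepsilon^{2k}$ from \eqref{eq:delta-stable-range}, so its contribution is negligible as $\varepsilon\to0$.

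For the second sum, write each $m$ as $m=\lfloor cN\rfloor$ with $c=m/N\in[\varepsilon,\alpha]$; indeed $\lfloor (m/N)N\rfloor=m$ exactly. Apply Theorem~\ref{thm:leading-volume} uniformly on the compact set $\mathcal K=[\varepsilon,\alpha]$ to get
\[
I_k(m;N)=\delta_k(m/N)N^{2k-1}+O_{k,\varepsilon,\alpha}(N^{2k-2}).
\]
Summing over $O(N)$ values of $m$ gives
\[
N^{2k-1}\sum_{\varepsilon N<m\leq n}\delta_k(m/N)+O(N^{2k-1}).
\]
The sum is a Riemann sum for $\int_\varepsilon^\alpha\delta_k$ with mesh $1/N$. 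Since $\delta_k$ is continuous (indeed piecewise polynomial) on $[\varepsilon,\alpha]$, it equals $N\int_\varepsilon^\alpha\delta_k(c)\,dc+o(N)$. Dividing by $N^{2k}$ and letting $N\to\infty$ and then $\varepsilon\to0$ yields $M_k(\alpha)=\int_0^\alpha\delta_k(c)\,dc$. Here $\delta_k$ is integrable near $0$ because it equals $\tfrac{k!}{(2k-1)!}c^{2k-1}$ on $(0,1]$.

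Continuity of $\delta_k$ on $(0,\infty)$ then gives differentiability of $M_k$ with $M_k'=\delta_k$ for every $\alpha>0$. The main care point is the uniformity of the error term: the constant in Theorem~\ref{thm:leading-volume} depends on $\mathcal K$, and that is exactly why the region $c<\varepsilon$ is handled separately by the exact stable-range formula rather than by the asymptotic. The rest is routine Riemann-sum bookkeeping.
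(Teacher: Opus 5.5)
Your proof is correct and follows the paper's argument. You split the cumulative sum at $\varepsilon N$ with $\varepsilon<\min(1,\alpha)$, evaluate the low range by the stable-range formula \eqref{eq:Ik-stable-range}, apply Theorem~\ref{thm:leading-volume} uniformly on $[\varepsilon,\alpha]$ to get a Riemann sum, and conclude by continuity and the fundamental theorem of calculus; the only differences are minor (continuity of $\delta_k$ from the finite-spline formula of Theorem~\ref{thm:lauricella} instead of the Lipschitz bound \eqref{eq:Jkr-lipschitz}, and an $O(\varepsilon^{2k})$ bound on the low range followed by $\varepsilon\to0$ instead of computing its limit $\int_0^\varepsilon\delta_k$ exactly).
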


\begin{proof}
Equations~\eqref{eq:Jkr-lipschitz} and
\eqref{eq:Jr-unordered-delta} show that $\delta_k$ is locally Lipschitz,
and hence continuous, on $(0,\infty)$. Haar invariance under
$U\mapsto e^{i\varphi}U$ gives
\begin{equation}
\label{eq:Itilde-riemann-sum}
 \frac{\tilde I_k(\lfloor \alpha N\rfloor;N)}{N^{2k}}
 =
 \frac1N\sum_{m=k}^{\lfloor \alpha N\rfloor}
 \frac{I_k(m;N)}{N^{2k-1}}.
\end{equation}
Fix $0<\eta<\min(1,\alpha)$ and split the sum at
$m=\lfloor\eta N\rfloor$. On the range $ \lfloor\eta N\rfloor<m\leq\lfloor \alpha N\rfloor$, the ratio $m/N$ lies in the compact interval $[\eta,\alpha]$.
Theorem~\ref{thm:leading-volume} therefore applies uniformly to these
summands. After the normalisation in \eqref{eq:Itilde-riemann-sum}, the
accumulated error is $O(N^{-1})$, while the main terms form a Riemann sum
converging to $\int_\eta^\alpha \delta_k(c) \, dc $.

The remaining range
\begin{equation*}
 k\leq m\leq\lfloor\eta N\rfloor
\end{equation*}
is not uniformly bounded away from $m/N=0$ and hence is not covered by
the compact-uniform estimate in Theorem~\ref{thm:leading-volume}.
Since $\eta<1$, however, this range satisfies $m\leq N$ for all
sufficiently large $N$ and therefore lies in the stable range. By
\eqref{eq:Ik-stable-range} and \eqref{eq:delta-stable-range},
\begin{equation*}
 \frac1N
 \sum_{m=k}^{\lfloor\eta N\rfloor}
 \frac{I_k(m;N)}{N^{2k-1}}
 \longrightarrow
 \int_0^\eta\delta_k(c)\,dc.
\end{equation*}
Combining the two ranges and using the definition of $M_k(\alpha)$ gives
\begin{equation}
\label{eq:Mk-integral}
 M_k(\alpha)
 =
 \int_0^\eta\delta_k(c)\,dc
 +
 \int_\eta^\alpha \delta_k(c)\,dc
 =
 \int_0^\alpha \delta_k(c)\,dc.
\end{equation}
Equation~\eqref{eq:Mk-derivative} now follows from the continuity of
$\delta_k$ and the fundamental theorem of calculus.
\end{proof}

\begin{proof}[Proof of Theorem~\ref{thm:two-centre}]
Fix an integer $k\geq1$, and let $\alpha>0$ be real. We differentiate the 
$2k$-contour formula in Theorem~\ref{thm:two-centre-P-input}, and then organise the result in terms of a two-weight multiple Hankel
determinant. The key observation is that the $r$-th contour sector, i.e. the $r$-th term in \eqref{eq:two-centre-M-P},
contributes only for $\alpha>r$. The only point requiring care in differentiating
\eqref{eq:two-centre-M-P} is that the summation range changes when
$\alpha$ crosses an integer. Since $P_{r,k}=0$ for $r>k$, we may replace
this varying range by the fixed sum
\begin{equation}\label{eq:two-centre-M-fixed-sum}
 M_k(\alpha)
 =
 \frac1{k!^2}
 \sum_{r=0}^k
 \binom{k}{r}^{\!2}
 \mathds{1}_{\{\alpha>r\}}P_{r,k}(\alpha),
 \qquad \alpha>0,
\end{equation}
where the indicator retains the original condition $r<\alpha$. For
$r\geq1$, Theorem~\ref{thm:two-centre-P-input} gives
$P_{r,k}(r)=P'_{r,k}(r)=0$, so the function
$\alpha\mapsto\mathds{1}_{\{\alpha>r\}}P_{r,k}(\alpha)$ is continuously
differentiable at $\alpha=r$. For $r=0$, the indicator is identically one
on $\alpha>0$. Evaluating at $\alpha=c$ and using $M_k'(c)=\delta_k(c)$, we obtain
\begin{equation}\label{eq:two-centre-delta-fixed-sum}
 \delta_k(c)
 =
 \frac1{k!^2}
 \sum_{r=0}^k
 \binom{k}{r}^{\!2}
 \mathds{1}_{\{c>r\}}P'_{r,k}(c),
 \qquad c>0.
\end{equation}
We may simplify $P_{r,k}(c)$ by using $(-1)^{k-a}\bigl(T(\mathbf x)-c\bigr)^{k+a}
 =
 \bigl(c -T(\mathbf x)\bigr)^{k+a}$. Thus, on the range $c>r$, differentiating the ordinary polynomial $P_{r,k}(c)$ gives the same contour integral as
\eqref{eq:two-centre-P-original}, with
\begin{equation*}
 \frac{\bigl(c -T(\mathbf x)\bigr)^{k+a-1}}
      {(k+a-1)!}
\end{equation*}
in place of the original power.

It remains to encode the factor $\mathds{1}_{\{c>r\}}$.  Because the variables $x_j$ lie on complex contours, the notation
$(c-T(\mathbf x))_+^m$ will not mean the pointwise positive part of a
complex number. Instead, for the rational contour factors occurring here, we use it as shorthand for the residue-first
inverse Laplace expression
\begin{align}
\int_{\mathbf c(r)}\int_{\hat{\mathbf c}(r)}
 \hspace{-0.05in} \frac{(c-T(\mathbf x))_+^m}{m!}
 F(\mathbf x,\mathbf y)\,d\mathbf x\,d\mathbf y
:=
 \mathcal L^{-1}_{s\to c}
 \left[
  \frac1{s^{m+1}}
  \int_{\mathbf c(r)}\int_{\hat{\mathbf c}(r)}
  e^{-sT(\mathbf x)}
  F(\mathbf x,\mathbf y)\,d\mathbf x\,d\mathbf y
 \right] \hspace{-0.05in}.
\label{eq:two-centre-residue-first}
\end{align}
The contour residues inside the square brackets are evaluated before the
inverse Laplace transform is taken. To see how this expression encodes the threshold $c=r$, write in the
$r$-th contour sector
\begin{equation*}
 x_j=\varepsilon_j+\xi_j,
 \qquad
 \varepsilon_j=
 \begin{cases}
  1,&j\leq r,\\
  0,&j>r.
 \end{cases}
\end{equation*}
Then
\begin{equation*}
 T(\mathbf x)=r+\sum_{j=1}^k\xi_j,
 \qquad
 e^{-sT(\mathbf x)}
 =
 e^{-rs}
 \exp\!\biggl(-s\sum_{j=1}^k\xi_j\biggr).
\end{equation*}
The contour poles have finite order, so evaluating the residues uses only
finitely many terms in the Taylor series of the final exponential.  The
result is therefore \(e^{-rs}\) multiplied by a finite Laurent polynomial
in \(s\).  Its negative powers are inverted using
\begin{equation}\label{eq:two-centre-shifted-Laplace}
 \mathcal L^{-1}_{s\to c}
 \left[e^{-rs}s^{-q}\right]
 =
 \frac{(c-r)_+^{q-1}}{(q-1)!},
 \qquad q\geq1.
\end{equation}
Every such term is zero for $c<r$ and becomes an ordinary polynomial in
$c-r$ for $c>r$.  Consequently, away from $c=r$, the residue-first
expression in \eqref{eq:two-centre-residue-first} is precisely the ordinary
contour polynomial multiplied by $\mathds{1}_{\{c>r\}}$. At $c=r$, the
complete $r$-th sector is assigned its continuous value; this is
well-defined because $P_{r,k}(r)=P'_{r,k}(r)=0$.

We now shift the $y$-variables by setting $ w_\ell=y_\ell+1$ for $1\leq\ell\leq k$. In the sector indexed by $r$, the contours become
\begin{equation}\label{eq:two-centre-contour-system}
 x_j\in
 \begin{cases}
  C_1,&j\leq r,\\
  C_0,&j>r,
 \end{cases}
 \qquad
 w_\ell\in
 \begin{cases}
  C_0,&\ell\leq r,\\
  C_1,&\ell>r.
 \end{cases}
\end{equation}
Denote these iterated contour systems by $\mathcal C_x(r)$ and $\mathcal C_w(r)$, and write $d\mathbf w=\prod_{\ell=1}^k dw_\ell$. Combining \eqref{eq:two-centre-delta-fixed-sum} with the residue-first
convention \eqref{eq:two-centre-residue-first}, after this change of
variables, gives
\begin{align}
 \delta_k(c)
 ={}&\frac1{(2\pi i)^{2k}k!^2}
 \mathcal L^{-1}_{s\to c}
 \Bigg[
 \sum_{a=0}^k\binom{k}{a}s^{-(k+a)}
 \sum_{r=0}^k\binom{k}{r}^{\!2}
 \notag\\[-1mm]
 &\hspace{22mm}\times
 \int_{\mathcal C_x(r)}\int_{\mathcal C_w(r)}
 e^{-sT(\mathbf x)}
 A(\mathbf x,\mathbf w)^a B(\mathbf x,\mathbf w)^k
 \mathcal V(\mathbf x,\mathbf w)
 \,d\mathbf x\,d\mathbf w
 \Bigg],
 \label{eq:two-centre-delta-Laplace-shifted}
\end{align}
where
\begin{equation*}
 \mathcal V(\mathbf x,\mathbf w)
 :=\Delta(\mathbf x)^2\Delta(\mathbf w)^2
   \prod_{j,\ell=1}^k(x_j-w_\ell)
   \prod_{j=1}^k x_j^{-k}
   \prod_{\ell=1}^k(w_\ell-1)^{-k},
\end{equation*}
and
\begin{equation*}
 A(\mathbf x,\mathbf w)
 :=\sum_{j=1}^k\frac1{x_j}+\sum_{\ell=1}^k\frac1{w_\ell},
 \qquad
 B(\mathbf x,\mathbf w)
 :=\sum_{j=1}^k\frac1{x_j-1}+\sum_{\ell=1}^k\frac1{w_\ell-1}.
\end{equation*}
The inverse Laplace transform is understood in the residue-first sense of
\eqref{eq:two-centre-residue-first}; in particular, it already encodes the restriction $c>r$. The powers of $A(\mathbf x,\mathbf w)$ and $B(\mathbf x,\mathbf w)$ above are generated by
\begin{align}
A(\mathbf x,\mathbf w)^a
&=
\left.
\partial_u^a
\exp\!\bigl(uA(\mathbf x,\mathbf w)\bigr)
\right|_{u=0}, \qquad
B(\mathbf x,\mathbf w)^k
&=
\left.
\partial_v^k
\exp\!\bigl(vB(\mathbf x,\mathbf w)\bigr)
\right|_{v=0}.
\label{eq:two-centre-source-generation}
\end{align}
Consider the exponential generating integral
\begin{equation}
\label{eq:two-centre-generating-target}
\frac1{(2\pi i)^{2k}k!^2}
\sum_{r=0}^k\binom{k}{r}^{\!2}
\int_{\mathcal C_x(r)}\int_{\mathcal C_w(r)}
e^{-sT(\mathbf x)+uA(\mathbf x,\mathbf w)+vB(\mathbf x,\mathbf w)}
\mathcal V(\mathbf x,\mathbf w)
\,d\mathbf x\,d\mathbf w.
\end{equation}
We will relate this expression to $D_k(0;s,u,v)$ after applying
$\partial_u^a\partial_v^k$ and evaluating at $u=v=0$, for $0\leq a\leq k$. The reason the moments $X_n$ and $W_n$ arise is visible directly from
the integrand in \eqref{eq:two-centre-generating-target}. Its one-variable
weight factors are
\begin{equation*}
x^{-k}\exp\!\left(-sx+\frac ux+\frac v{x-1}\right)
\qquad\text{and}\qquad
(w-1)^{-k}\exp\!\left(\frac uw+\frac v{w-1}\right).
\end{equation*}
The Laplace variable $s$ appears only in the $x$-weight because the
cutoff depends only on $T(\mathbf x)=\sum_jx_j$. To connect these weights with the moments defined in
\eqref{eq:two-centre-X} and \eqref{eq:two-centre-W}, introduce 
\begin{align}
 d\mu_X^{(\theta)}(z)
 &:=\frac1{2\pi i}
 \left(dz\big|_{C_0}+e^{i\theta}dz\big|_{C_1}\right)
 z^{-k}\exp\!\left(-sz+\frac{u}{z}+\frac{v}{z-1}\right),
 \label{eq:two-centre-muX}
 \\
 d\mu_W^{(\theta)}(z)
 &:=\frac1{2\pi i}
 \left(dz\big|_{C_1}+e^{-i\theta}dz\big|_{C_0}\right)
 (z-1)^{-k}\exp\!\left(\frac{u}{z}+\frac{v}{z-1}\right).
 \label{eq:two-centre-muW}
\end{align}
Then
\begin{equation*}
X_n(\theta;s,u,v)=\int z^n\,d\mu_X^{(\theta)}(z),
\qquad
W_n(\theta;u,v)=\int z^n\,d\mu_W^{(\theta)}(z).
\end{equation*}
Suppressing the
common arguments, so that
\begin{equation*}
 X_n=X_n(\theta;s,u,v),
 \qquad
 W_n=W_n(\theta;u,v),
\end{equation*}
we have
\begingroup
\setlength{\arraycolsep}{3.5pt}
\begin{equation}\label{eq:two-centre-D-matrix}
 D_k(\theta;s,u,v)
 =
 \det\!\begin{pmatrix}
 X_0&X_1&\cdots&X_{k-1}&W_0&W_1&\cdots&W_{k-1}\\
 X_1&X_2&\cdots&X_k&W_1&W_2&\cdots&W_k\\
 \vdots&\vdots&&\vdots&\vdots&\vdots&&\vdots\\
 X_{2k-1}&X_{2k}&\cdots&X_{3k-2}
 &W_{2k-1}&W_{2k}&\cdots&W_{3k-2}
 \end{pmatrix}.
\end{equation}
\endgroup
Therefore the first $k$ columns are generated by the $X$-moments and the
last $k$ columns by the $W$-moments. The above also makes the block Hankel description transparent; reorder the scalar rows in pairs as $(0,k),(1,k+1),\ldots,(k-1,2k-1)$
and reorder the columns so that each $X$-column is placed next to the corresponding $W$-column: 
\begin{equation*}
(X_0,W_0),(X_1,W_1),\ldots,(X_{k-1},W_{k-1}).
\end{equation*}
The row and column reorderings leave the determinant unchanged. Hence
\begin{equation}\label{eq:two-centre-block-Hankel}
 D_k(\theta;s,u,v)
 =\det\left[\mathbf M_{p+q}(\theta;s,u,v)\right]_{p,q=0}^{k-1},
 \qquad
 \mathbf M_n=
 \begin{pmatrix}
 X_n&W_n\\
 X_{n+k}&W_{n+k}
 \end{pmatrix}.
\end{equation}
In particular, the block in position $(p,q)$ depends only on $p+q$, i.e. the same $2 \times 2$ block appears along each block anti-diagonal. We now use a grouped form of Andr\'eief's identity; see
Kuijlaars~\cite[Lemma~2.1]{kuijlaarsMOP}. Let $f_0,\ldots,f_{2k-1}$, $g_0,\ldots,g_{k-1}$, and
$h_0,\ldots,h_{k-1}$ be functions whose products $f_mg_q$ and
$f_mh_q$ are absolutely integrable with respect to $\mu_X$
and $\mu_W$, respectively. Writing
$\mathbf z=(x_1,\ldots,x_k,w_1,\ldots,w_k)$, we have
\begin{align}
&\det\left[
 \int f_m(x)g_q(x)\,d\mu_X(x)
 \ \middle|\
 \int f_m(w)h_q(w)\,d\mu_W(w)
 \right]_{\substack{0\le m\le2k-1\\0\le q\le k-1}}
 \notag\\[-1mm]
&\quad\quad \quad\quad\quad\quad=\frac1{k!^2}
 \int
 \det[f_m(z_j)]_{\substack{0\le m\le2k-1\\1\le j\le2k}}
 \det[g_p(x_q)]_{\substack{0\le p\le k-1\\1\le q\le k}}
 \det[h_p(w_q)]_{\substack{0\le p\le k-1\\1\le q\le k}}
 \notag\\[-1mm]
&\qquad\qquad\qquad\qquad\qquad\times
 \prod_{j=1}^k d\mu_X(x_j)
 \prod_{\ell=1}^k d\mu_W(w_\ell).
 \label{eq:two-centre-Andreief}
\end{align}
Splitting the combined
Vandermonde gives
\begin{align}
\Delta(x_1,\ldots,x_k,w_1,\ldots,w_k)
&=
\Delta(\mathbf x)\Delta(\mathbf w)
\prod_{j,\ell=1}^k(w_\ell-x_j),
\end{align}
and therefore
\begin{align}
\Delta(x_1,\ldots,x_k,w_1,\ldots,w_k)
\Delta(\mathbf x)\Delta(\mathbf w)
&=
(-1)^k
\Delta(\mathbf x)^2\Delta(\mathbf w)^2
\prod_{j,\ell=1}^k(x_j-w_\ell).
\label{eq:two-centre-cross-sign}
\end{align}
In the above, replacing each of the $k^2$ factors $w_\ell-x_j$ by
$-(x_j-w_\ell)$ contributes
$(-1)^{k^2}=(-1)^k$. Next, we aim to recover the integrals in
\eqref{eq:two-centre-delta-Laplace-shifted} from $D_k(0;s,u,v)$.
Apply \eqref{eq:two-centre-Andreief} at $\theta=0$, then
differentiate by $\partial_u^a\partial_v^k$ and evaluate at
$u=v=0$, where $0\leq a\leq k$. The contours are compact and
the integrands are entire in $u,v$, so differentiation may be
taken under the contour integrals. By
\eqref{eq:two-centre-source-generation}, this produces the factor
$A(\mathbf x,\mathbf w)^aB(\mathbf x,\mathbf w)^k$. In the expansion of the product measures, suppose that $p$ of
the $x$-variables are integrated over $C_1$ and $q$ of the
$w$-variables over $C_0$. We show that only the balanced
choices $p=q$ contribute.

After setting $y_\ell=w_\ell-1$, the integrand and contours
are those defining $J_{-s,p,q}^{(a)}(1)$ in
\cite[proof of Lemma~2.6]{kn:bcis26}, with
$\Phi_L(\mathbf x)=e^{LT(\mathbf x)}$.
This function is entire and symmetric in the $x$-variables,
so the shifted pole-count argument there applies and gives
$J_{-s,p,q}^{(a)}(1)=0$ whenever $p\ne q$, for every
$0\leq a\leq k$. Hence only the balanced choices $p=q=r$
survive.

There are $\binom{k}{r}^2$ balanced contour choices. Since the
integrand is symmetric separately in the $x$-variables and
in the $w$-variables, these choices may be relabelled into
$\mathcal C_x(r)$ and $\mathcal C_w(r)$ from
\eqref{eq:two-centre-contour-system}. Using
\eqref{eq:two-centre-cross-sign}, we obtain
\begin{align}
 \left.
 \partial_u^a\partial_v^kD_k(0;s,u,v)
 \right|_{u=v=0}
 ={}&
 \frac{(-1)^k}{(2\pi i)^{2k}k!^2}
 \sum_{r=0}^k\binom{k}{r}^{\!2}
 \int_{\mathcal C_x(r)}\int_{\mathcal C_w(r)}
 e^{-sT(\mathbf x)}
 \notag\\
 &\qquad\qquad\qquad\times
 A(\mathbf x,\mathbf w)^a
 B(\mathbf x,\mathbf w)^k
 \mathcal V(\mathbf x,\mathbf w)
 \,d\mathbf x\,d\mathbf w.
 \label{eq:two-centre-D-derivative-integral}
\end{align}

Write the contour integral in
the $r$-th term as $e^{-rs}Q_{r,a}(s)$, where $Q_{r,a}$ is a
polynomial because the contour poles have finite order. If we put $h=k-r$, the total vanishing order of the Vandermonde
and cross factors at the contour centres is
\begin{equation*}
 Z=2h(h-1)+2r(r-1)+2hr,
\end{equation*}
and the total pole order is at most $2kh+a+k$. Taking a
residue in $2k$ variables therefore gives
\begin{equation*}
 \deg Q_{r,a}
 \leq 2kh+a+k-Z-2k
 =k+a-2r^2.
\end{equation*}
For $r=0$, counting only the $x$-variables gives the stronger
bound
\begin{equation*}
 \deg Q_{0,a}\leq k^2+a-k(k-1)-k=a.
\end{equation*}
A negative degree bound means that the residue vanishes. After multiplication by $s^{-(k+a)}$, only negative powers
of $s$ occur, with exponents at most $-k$ for $r=0$ and
$-2r^2$ for $r\geq1$. Thus
\eqref{eq:two-centre-shifted-Laplace} applies, and the
contributions with $r\geq1$ vanish continuously at $c=r$. Combining \eqref{eq:two-centre-D-derivative-integral} with
\eqref{eq:two-centre-delta-Laplace-shifted} gives
\begin{equation}
 \label{eq:two-centre-before-packing}
 \delta_k(c)
 =
 (-1)^k\mathcal L^{-1}_{s\to c}
 \left[
 \sum_{a=0}^k\binom{k}{a}s^{-(k+a)}
 \left.
 \partial_u^a\partial_v^kD_k(0;s,u,v)
 \right|_{u=v=0}
 \right].
\end{equation}
Here the derivatives in $u,v$ are evaluated at $u=v=0$
before Laplace inversion. Finally, the binomial identity gives
\begin{equation}
 \label{eq:two-centre-binomial-operator}
 \sum_{a=0}^k
 \binom{k}{a}s^{-(k+a)}\partial_u^a
 =
 s^{-k}\left(1+\frac{\partial_u}{s}\right)^k.
\end{equation}
Substituting the above into \eqref{eq:two-centre-before-packing} proves \eqref{eq:two-centre-final-formula}.
\end{proof}
\begin{remark}
The expression inside the inverse Laplace transform in
\eqref{eq:two-centre-final-formula} can be written using
a single Kummer function.
Define
\begin{equation}
 \varphi_k(s):=
 \frac{1}{2\pi i}\left(\int_{C_0}+\int_{C_1}\right)
 \frac{e^{-sz}}{z^{2k}(z-1)^k}\,dz
 =
 \frac{(-s)^{3k-1}}{(3k-1)!}\,
 {}_1F_1(k;3k;-s).
\end{equation}
Here ${}_1F_1$ is Kummer's confluent hypergeometric function;
the equality follows by combining the contours and extracting
the coefficient of $z^{-1}$ in the expansion for $|z|>1$.
For $n\geq0$ and $0\leq a,b\leq k$, \eqref{eq:two-centre-X}
shows that
$\left.\partial_u^a\partial_v^bX_n(0;s,u,v)\right|_{u=v=0}$
is obtained by applying
$(-\partial_s)^{n+k-a}(-\partial_s-1)^{k-b}$ to $\varphi_k$ since $-\partial_s$ and $-\partial_s-1$ insert factors
of $z$ and $z-1$, respectively, into the contour integral.
The corresponding derivatives of $W_n$ are constants,
by \eqref{eq:two-centre-W}.

Kummer's differential equation gives
$s\varphi_k''+(s+2-3k)\varphi_k'-(2k-1)\varphi_k=0$.
For $s\neq0$, this reduces every higher derivative to a
rational linear combination of $\varphi_k$ and $\varphi_k'$.
Since each determinant term in \eqref{eq:two-centre-D-matrix}
contains exactly $k$ $X$-moments, the expression inverted in
\eqref{eq:two-centre-before-packing} is therefore a homogeneous
polynomial of degree $k$ in $\varphi_k$ and $\varphi_k'$,
with rational coefficients in $s$.
The function $\varphi_k$ provides a Kummer seed for the
classical Painlev\'e V solution $w_k$, given by
\begin{equation}
 w_k(s):=1-\frac{s\varphi_k'(s)}
 {(2k-1)\varphi_k(s)},
 \qquad
 s w_k'=(2k-1)w_k^2+(1-k-s)w_k-k.
\end{equation}
This is a standard Riccati reduction of Painlev\'e V with parameters
$(\alpha,\beta,\gamma,\delta)
=((2k-1)^2/2,-k^2/2,3k-2,-1/2)$;
see \cite[\S32.10(v)]{NIST:DLMF}.
\end{remark}

\subsubsection{Related literature}
Related determinant and Painlev\'e representations occur
in the study of characteristic-polynomial derivative moments.
Conrey, Rubinstein and Snaith~\cite{kn:crs06} obtain scalar
determinant formulas, while Basor et al.~\cite{kn:basor_et_al18}
express joint moments of CUE characteristic polynomials and
their derivatives through Painlev\'e V functions.
More recently, Assiotis et al.~\cite{kn:agkw26} treat joint
moments involving higher derivatives using Hankel determinants
shifted by partitions and derivatives of Painlev\'e V functions. At the level of the two-weight moment structure in
\eqref{eq:two-centre-D-matrix}, a direct precedent is the work
of Bailey et al.~\cite[Section~7]{kn:bbbcpRS19} on
near-unit-circle moments of logarithmic derivatives of CUE
characteristic polynomials.
Their determinant has the same structure as $D_k$, with
different weights.
They relate it to multiple orthogonal polynomials,
a $3\times3$ Riemann--Hilbert problem and, up to an explicit
factor, an isomonodromic tau function.
\vspace{-2mm}
\section{Acknowledgments}
This research was supported by the Heilbronn Institute for Mathematical Research. I would like to thank Nina Snaith, Brian Conrey and Sieg Baluyot for introducing me to this problem, Vivian Kuperberg for answering my questions, and Benoît Collins for helpful discussions relating to this topic.
\vspace{-4mm}
\bibliographystyle{plain} 
\bibliography{ref} 
\end{document}